\documentclass[10pt,a4paper,draft]{article}
\usepackage{bbm}
\usepackage{pifont}
\usepackage{bbding}
\usepackage{amsmath, esint}
\usepackage{mathrsfs}
\usepackage{amsfonts}
\usepackage{amssymb}
\usepackage{amsfonts,amssymb,amsmath,indentfirst,cases,amsthm}
\usepackage{epsfig}
\usepackage[numbers,sort&compress]{natbib}
\allowdisplaybreaks
\def\ess~sup{\mathop{\rm ess~sup}}

\numberwithin{equation}{section}

\newenvironment{key words}{\emph{\texttt{Keywords}}\mbox{  }}{ }
\newtheorem{theorem}{Theorem}[section]
\newtheorem{lemma}[theorem]{Lemma}

\newtheorem{proposition}[theorem]{Proposition}

\newtheorem{definition}[theorem]{Definition}

\theoremstyle{remark}

\theoremstyle{plain}

\makeatletter

\newcommand{\Rmnum}[1]{\expandafter\@slowromancap\romannumeral #1@}
\makeatother
\begin{document}

 \title{\textbf{Higher Weak Differentiability to Mixed Local and Nonlocal Degenerate Elliptic Equations in the Heisenberg Group
}}
\author{Junli Zhang$^1$\thanks{Corresponding
author's E-mail:jlzhang2020@163.com}, Pengcheng Niu$^2$\\
\small{1. School of Mathematics and Data Science, Shaanxi University of Science and Technology,}\\
\small{ Xi'an, Shaanxi, 710021, P. R. China}\\
\small{2. School of Mathematics and Statistics, Northwestern Polytechnical
University,}\\
\small{ Xi'an, Shaanxi, 710129, P. R. China}
}
\date{}
\maketitle

\maketitle {\bf Abstract}\
In this paper, we investigate the higher weak differentiability of solutions to a class of mixed local and nonlocal degenerate elliptic equations in the Heisenberg group $\mathbb{H}^n$. Owing to the non-commutative property and two-step nilpotent Lie algebra structure of $\mathbb{H}^n$, we first employ an iterative scheme involving fractional difference quotients to establish the weak differentiability of solutions in the vertical direction. This is subsequently extended to the horizontal and vertical gradients. Then, by coupling a truncation argument with the difference quotient method, we prove the higher weak differentiability of the gradients of solutions.

\textbf {Keywords} {Heisenberg group; weak differentiability; mixed local and nonlocal degenerate elliptic equations; difference quotients; truncation method}\\

\textbf {AMS} Primary: 35B65, 35D30; Secondary: 35J92, 35R11.
\def\Xint#1{\mathchoice
    {\XXint\displaystyle\textstyle{#1}}%
    {\XXint\textstyle\scriptstyle{#1}}%
    {\XXint\scriptstyle\scriptscriptstyle{#1}}%
    {\XXint\scriptscriptstyle\scriptscriptstyle{#1}}%
    \!\int}
\def\XXint#1#2#3{{\setbox0=\hbox{$#1{#2#3}{\int}$}
    \vcenter{\hbox{$#2#3$}}\kern-.5\wd0}}
\def\dashint{\Xint-}
\section{Introduction}

In recent years, the regularity theory for mixed local and nonlocal elliptic operators in the Euclidean setting has attracted significant attention. Notable advancements include interior Sobolev and Lipschitz-type boundary regularity \cite{BDVV22}, $C^{1,\alpha}$ regularity for weak solutions \cite{SVWZ22}, gradient estimates \cite{DM24}, and higher H\"{o}lder regularity \cite{GL23}. Furthermore, boundary behavior \cite{BMS23} and the analysis of problems with nonstandard growth-including local boundedness and Harnack inequalities \cite{DFZ24} have been extensively documented.

Despite these developments, the regularity theory for mixed local and nonlocal degenerate elliptic equations within the sub-Riemannian framework of the Heisenberg group $\mathbb{H}^n$ remains largely nascent. Addressing this gap, Oza and Tyagi \cite{OT25} recently established the existence and H\"{o}lder regularity of viscosity solutions for a class of mixed fully nonlinear operators in $\mathbb{H}^n$. This line of research was further advanced by Zhang and Niu \cite{ZN26}, who achieved a significant breakthrough regarding weak solutions to mixed local and nonlocal degenerate elliptic equations. Their work established fundamental regularity properties, including the local boundedness of weak subsolutions, H\"{o}lder continuity, and, crucially, both the Harnack and weak Harnack inequalities. More recently, Zhang \cite{Z26} further refined these results by establishing the $C^{1,\alpha}$ regularity for weak solutions to a class of mixed local and nonlocal operators on $\mathbb{H}^n$. This result provides a sharp characterization of the smoothness of solutions, extending the regularity theory beyond the H\"{o}lder continuous regime to the level of horizontal gradients.

In this paper, we investigate the following mixed local and nonlocal degenerate elliptic equation:
\begin{equation}\label{eq11}
  - \Delta_{\mathbb{H}^n} u\left( x \right) + {c_{Q,s}}P.V.\int_{{\mathbb{H}^n}} {\frac{{u\left( x \right) - u\left( y \right)}}{{\left\| {{y^{ - 1}} \circ x} \right\|_{{\mathbb{H}^n}}^{Q + 2s}}}dy}  = f\left( x \right),\;x \in \Omega
\end{equation}
where $0 < s < 1$, $Q = 2n + 2$, and $f \in L^2(\Omega)$ on a bounded domain $\Omega \subset \mathbb{H}^n$. In contrast to the theory of mixed Euclidean operators (e.g., \cite{DM24}, \cite{GL23}), where standard difference quotients can be directly applied to the weak formulation, establishing higher regularity in $\mathbb{H}^n$ presents fundamental obstacles stemming from its underlying non-commutative property and two-step nilpotent Lie algebra structure. Specifically, the non-trivial commutator relation between the horizontal vector fields $X_i$ and the vertical vector field $T$ strictly precludes a direct differentiation of the equation \eqref{eq11}. Furthermore, the nonlocal integral term inherently obstructs standard local iterative techniques. While the $C^{1,\alpha}$-regularity established by Zhang \cite{Z26} have successfully characterized the optimal H\"older continuity of horizontal gradients, the problem of higher weak differentiability (e.g., $HW^{m+2,2}$-regularity) for such mixed systems has remained entirely open.

To overcome these dual hurdles, we devise a novel two-stage strategy. We first implement an iterative scheme based on fractional difference quotients to systematically recover the weak differentiability of $u$ in the vertical direction. Subsequently, by coupling a refined truncation argument with horizontal difference quotient estimates, we successfully bypass the obstructions imposed by the nonlocality and achieve the higher weak differentiability of the full gradient.

The regularity theory for degenerate elliptic equations satisfying $p$-growth conditions in the Heisenberg group $\mathbb{H}^n$ has been extensively investigated. For the quadratic case ($p=2$), Capogna \cite{CL97} established the H\"{o}lder regularity of Euclidean gradients. This was achieved by proving that the horizontal and vertical gradients of weak solutions belong to the local horizontal Sobolev space $HW_{\mathrm{loc}}^{1,2}(\Omega)$ via the method of fractional difference quotients. These results were extended to the more general setting of Carnot groups in \cite{CL99}.

Regarding the sub-elliptic $p$-Laplacian equation
\begin{equation}\label{eq12}
- \sum_{i = 1}^{2n} X_i \left( \left( \mu + |\nabla_H u|^2 \right)^{\frac{p - 2}{2}} X_i u \right) = 0, \quad \mu > 0,
\end{equation}
Domokos \cite{DA04} deduced the weak differentiability of solutions for $1 < p < 4$. Specifically, it was shown that $Tu \in L_{loc}^p(\Omega)$, $u \in HW_{\mathrm{loc}}^{2,p}(\Omega)$(whose definition can be found in Section 2.1) for $\frac{\sqrt{17}-1}{2} \le p \le 2$, and $\nabla_H Tu \in L_{loc}^2(\Omega)$, thereby generalizing the results in \cite{MS03, MS031}. For $p$ sufficiently close to $2$, Manfredi and Mingione \cite{MM07} derived the Lipschitz continuity and $C^\infty$ smoothness of weak solutions to \eqref{eq12} using the approach from \cite{CL97}. Furthermore, Domokos and Manfredi \cite{DM05} employed Calder\'{o}n-Zygmund theory to establish $C^{1,\alpha}$ regularity for weak solutions to both \eqref{eq12} and the degenerate equation
\begin{equation}\label{eq13}
\sum_{i = 1}^{2n} X_i \left( |\nabla_H u|^{p - 2} X_i u \right) = 0.
\end{equation}

Building upon the work in \cite{DA04}, Mingione, Zatorska-Goldstein and Zhong \cite{MZZ07} obtained $C^{1,\alpha}$ regularity for \eqref{eq12} and Lipschitz continuity for \eqref{eq13} in the range $2 \le p < 4$. By employing energy estimates, interpolation inequalities and the double-bootstrap argument, they successfully expanded the integrability of $Tu$ from $p < 4$ to the full range $1 < p < \infty$, effectively breaking the restriction imposed by the homogeneous topological dimension $Q=2n+2$. For $p > 4$ in the specific case of $\mathbb{H}^1$, Ricciotti \cite{R18} provided the local H\"{o}lder regularity of horizontal derivatives (see also \cite{R15} for a comprehensive review). More recently, Domokos and Manfredi \cite{DM20} extended $C^{1,\alpha}$ regularity for $p \ge 2$ to general Lie groups. Mukherjee and Zhong \cite{MZ17} achieved $C^{1,\alpha}$ regularity for the entire range $1 < p < \infty$ by utilizing Moser iteration and fine oscillation estimates.

Beyond standard $p$-growth, Zhang and Niu \cite{ZN20} investigated the $C^\alpha$ regularity of weak solutions to nonlinear equations under generalized Orlicz growth conditions, including $(p,q)$-growth and variable exponent growth, and further established fractional estimates in \cite{ZN22}. Additionally, Zhang and Li \cite{ZL23} proved the weak differentiability of non-uniformly nonlinear degenerate elliptic systems under $(p,q)$-growth conditions.

The $p$-fractional subLaplace operator has emerged as a fundamental operator in diverse fields, including quantum mechanics, image segmentation models, and ferromagnetic analysis. We first recall the linear regime ($p=2$), where the fractional subLaplace operator $(-\Delta_{\mathbb{H}^n})^s$ for $s \in (0,1)$ was initially introduced in the context of conformal invariance by Branson et al. \cite{BFM13}. Within the Heisenberg group $\mathbb{H}^n$, this operator is defined via the spectral multiplier formula:
$$
(-\Delta_{\mathbb{H}^n})^s := 2^s |T|^s \frac{\Gamma \left( -\frac{1}{2} \Delta_{\mathbb{H}^n} |T|^{-1} + \frac{1+s}{2} \right)}{\Gamma \left( -\frac{1}{2} \Delta_{\mathbb{H}^n} |T|^{-1} + \frac{1-s}{2} \right)},
$$
where $\Gamma(\cdot)$ denotes the Euler Gamma function, $T$ is the vertical vector field, and $\Delta_{\mathbb{H}^n}$ is the standard Kohn-Spencer subLaplace operator. Subsequently, Roncal and Thangavelu \cite{RT16} derived that this operator admits the following singular integral representation:
\begin{equation}\label{eq14}
(-\Delta_{\mathbb{H}^n})^s u(x) := C(Q,s) \, \mathrm{P.V.} \int_{\mathbb{H}^n} \frac{u(x) - u(y)}{\| y^{-1} \circ x \|_{\mathbb{H}^n}^{Q + 2s}} \, dy, \quad x \in \mathbb{H}^n,
\end{equation}
 where $Q = 2n+2$ is the homogeneous dimension of $\mathbb{H}^n$ and $C(Q,s) > 0$ is a normalization constant.

Over the past decade, the linear theory surrounding \eqref{eq14} has been extensively developed. Notable contributions include the derivations of Hardy and uncertainty inequalities on stratified Lie groups \cite{CCR15}, the development of Sobolev and Morrey-type embedding theorems for the fractional horizontal Sobolev space $H^s(\mathbb{H}^n)$ \cite{AM18}, and the establishment of Harnack and H\"{o}lder estimates in Carnot groups \cite{FF15}. Furthermore, Liouville-type theorems were explored in \cite{CT16}. For a broader overview of the linear theory, we refer the reader to \cite{FMPPS18, FGMT15, GT22, GT21} and the references therein.

More recently, the attention has shifted toward the nonlinear analogue of \eqref{eq14}, namely the fractional $p$-subLaplace operator ($p \neq 2$). Regarding the regularity theory in this setting, Manfredini et al. \cite{MPP23} proved the interior boundedness and H\"{o}lder continuity of weak solutions by adapting the De Giorgi-Nash-Moser iteration to the nonlocal sub-Riemannian framework. In a complementary study, Palatucci and Piccinini \cite{PP22} established a nonlocal Harnack inequality and characterized the asymptotic behavior of the operator as $s \to 1^-$. Additionally, the study of obstacle problems associated with the nonlocal $p$-subLaplace equation was systematically advanced by Piccinini \cite{P22}, who demonstrated solvability, semicontinuity, and global H\"{o}lder regularity up to the boundary. Further estimates and fundamental functional inequalities in this direction can be found in \cite{PP23, KS18, KS20}. Finally, for the more general class of nonlocal double-phase equations, Fang, Zhang, and Zhang \cite{FZZ24} recently established local regularity results.

Now, we present the definition of weak solutions and state our main regularity theorems. Let us give the notion of a weak solution to the mixed local and nonlocal equation \eqref{eq11} as follows:

\begin{definition}\label{De11}
Let $\Omega \subset \mathbb{H}^n$ be an open set and $f \in L^2(\Omega)$. A function $u \in HW^{1,2}(\mathbb{H}^n)$(whose definition can be found in Section 2.1) is said to be a weak solution to equation \eqref{eq11}, if for every test function $\varphi \in HW_0^{1,2}(\Omega)$, the following integral identity holds:
\begin{equation}\label{eq15}
\int_{\mathbb{H}^n} \nabla_H u \cdot \nabla_H \varphi \, dx + \frac{c_{Q,s}}{2} \iint_{\mathbb{H}^n \times \mathbb{H}^n} \frac{(u(x) - u(y))(\varphi(x) - \varphi(y))}{\| y^{-1} \circ x \|_{\mathbb{H}^n}^{Q + 2s}} \, dx dy = \int_\Omega f \varphi \, dx.
\end{equation}
\end{definition}

Our first main result is $L^2$-estimates for the vertical derivatives of the weak solution:

\begin{theorem}\label{Th11}
Let $u \in HW^{1,2}(\mathbb{H}^n)$ be a weak solution to \eqref{eq11} with $f \in L^2(\Omega)$. Then the vertical derivative $Tu$ belongs to $HW_{\mathrm{loc}}^{1,2}(\Omega)$. Furthermore, for any ball $B(x_0, 2r) \subset \Omega$, there exists a positive constant $c = c(Q,s)$ such that the following estimates hold:
\begin{equation}\label{eq16}
\| Tu \|_{L^2(B(x_0, r/2))} \le c \left( \| u \|_{HW^{1,2}(\mathbb{H}^n)} + \| f \|_{L^2(\Omega)} \right),
\end{equation}
and
\begin{equation}\label{eq17}
\| T \nabla_H u \|_{L^2(B(x_0, r/4))} \le c \left( \| u \|_{HW^{1,2}(\mathbb{H}^n)} + \| f \|_{L^2(\Omega)} \right).
\end{equation}
\end{theorem}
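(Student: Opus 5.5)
Since $T$ commutes with every $X_i$ and with the nonlocal operator (left translations along the center are isometries of the Korányi gauge and the operator is translation invariant), the plan follows Capogna's fractional difference quotient scheme \cite{CL97}. The nonlocal term only needs a tail estimate in the right place.

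For a vertical increment $\tau_h u(x)=u(x\circ(0,h))$ and $0<\alpha\le 1$, set
\[
D^\alpha_h u=\frac{\tau_h u-u}{|h|^{\alpha}} .
\]
Right multiplication by a central element is also a left translation, so $\tau_h$ commutes with $X_i$ and preserves $\|y^{-1}\circ x\|_{\mathbb{H}^n}$. We test \eqref{eq15} with $\varphi=D^{-\alpha}_{-h}(\eta^2 D^\alpha_h u)$, where $\eta$ is a cutoff supported in $B(x_0,r)$. This gives
\[
\int \nabla_H D^\alpha_h u\cdot\nabla_H(\eta^2 D^\alpha_h u)\,dx+\frac{c_{Q,s}}{2}\iint\frac{(D^\alpha_h u(x)-D^\alpha_h u(y))(\eta^2D^\alpha_hu(x)-\eta^2D^\alpha_hu(y))}{\|y^{-1}\circ x\|^{Q+2s}}\,dx\,dy=\int f\,\varphi\,dx .
\]
\begin{itemize}
\item The local part gives $\|\eta\nabla_H D^\alpha_h u\|_2^2$ minus $\|\nabla_H\eta\, D^\alpha_h u\|_2^2$.
\item The nonlocal part splits into a nonnegative quadratic form in $\eta D^\alpha_h u$, plus a commutator term bounded by $\|D^\alpha_h u\|_{L^2(B_r)}^2$ using $|\eta(x)-\eta(y)|\le C|x-y|$, plus a tail term. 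The tail is bounded by $\|u\|_{L^2(\mathbb{H}^n)}$ because $u$ is globally $L^2$.
\item The right-hand side is bounded by $\|f\|_2\|D^{-\alpha}_{-h}(\eta^2D^\alpha_hu)\|_2$.
\end{itemize}

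The key observation is that a vertical increment of size $h$ is a horizontal path of length $\sim|h|^{1/2}$, via the commutator $[X_i,Y_i]=T$. Hence for $w\in HW^{1,2}$ one has
\[
\|\tau_h w-w\|_{L^2(B_\rho)}\le C|h|^{1/2}\|\nabla_H w\|_{L^2(B_{\rho+C|h|^{1/2}})} .
\]
Applying this to $\eta^2D^\alpha_hu$ with $\alpha=\tfrac12$ lets the $f$ term be absorbed by Young's inequality. We obtain
\[
\sup_h\|\nabla_H D^{1/2}_h u\|_{L^2(B_{r'})}\le C\bigl(\|u\|_{HW^{1,2}(\mathbb{H}^n)}+\|f\|_{L^2}\bigr).
\]
We then iterate. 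Given a bound on $\nabla_H D^{\alpha_k}_h u$, the same path argument applied to $D^{\alpha_k}_hu$ gives $\|D^{\alpha_k+1/2}_h u\|_{L^2}$ bounded. Redoing the energy estimate with exponent $\alpha_{k+1}=\min(\alpha_k+\tfrac12,1)$ on a slightly smaller ball, and shrinking radii geometrically, we reach $\alpha=1$ after finitely many steps. The case $\alpha=1$ on $B(x_0,r/2)$ gives $\sup_h\|D^1_hu\|_{L^2}<\infty$, hence $Tu\in L^2$ and \eqref{eq16}. It also gives a uniform bound on $\nabla_H D^1_h u$ in $L^2$, hence $\nabla_H Tu=T\nabla_H u\in L^2(B(x_0,r/4))$, i.e.\ \eqref{eq17} and $Tu\in HW^{1,2}_{\rm loc}$.

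The main obstacle is the step $\alpha\to1$. There the right-hand side $\int f\,D^{-1}_{-h}(\eta^2D^1_hu)$ must be controlled by $\|f\|_2\,|h|^{-1/2}\|\nabla_H(\eta^2 D^{1/2}_h u)\|$-type quantities, which requires the preceding step's gain exactly. The tail and commutator terms of the nonlocal part must also be checked to involve only $\|D^\alpha_h u\|_{L^2}$, never derivatives. To control them I would first try the bound $|D^\alpha_h u|\le$ the previous-step quantity, together with $\int_{\mathbb{H}^n\setminus B_r}\|y^{-1}\circ x\|^{-Q-2s}dy\le Cr^{-2s}$, so the constant depends only on $Q,s$ and the radius.
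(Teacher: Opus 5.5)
Your $\alpha=\tfrac12$ step is sound; it is the paper's \eqref{eq310}. The step $\alpha\to1$, which you flag as the main obstacle, fails, and it cannot be repaired.

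Since $\eta^2D^1_hu=|h|^{-1/2}\eta^2D^{1/2}_hu$ and Lemma \ref{Le23} gains only half a $T$-derivative, the best available bounds are
\[
\|D^{-1}_{-h}(\eta^2D^1_hu)\|_{L^2}\lesssim |h|^{-1}\,\|\eta^2D^{1/2}_hu\|_{HW^{1,2}}
\quad\text{or}\quad
\|D^{-1}_{-h}(\eta^2D^1_hu)\|_{L^2}\lesssim |h|^{-1/2}\,\|\eta^2D^{1}_hu\|_{HW^{1,2}}.
\]
So the $f$-term is controlled only by $C|h|^{-1}\|f\|_{L^2}$ times the $\alpha=\tfrac12$ quantity. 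Alternatively, after Young's inequality, it is an absorbable term plus $C|h|^{-1}\|f\|_{L^2}^2$. Both blow up as $h\to0$. Note the power is $|h|^{-1}$, not your $|h|^{-1/2}$, and the latter would blow up too.

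This is structural. $T$ has homogeneous degree $2$, so $\int f\varphi$ pairs $f\in L^2$ with a degree-$4$ expression in $u$, while the left side controls only degree $3$. In fact \eqref{eq17} is false for $f\in L^2$. Let $\delta_\lambda(x',t)=(\lambda x',\lambda^2t)$, take $0\ne w\in C_0^\infty(\mathbb{H}^n)$, set $w_\lambda=w\circ\delta_\lambda$, and let $f_\lambda$ be the left-hand side of \eqref{eq11} evaluated at $w_\lambda$. The nonlocal part scales like $\lambda^{2s}$. Then $w_\lambda$ is a weak solution with $f_\lambda\in L^2(\Omega)$, and for $x_0=0$ and $\lambda\to\infty$,
\[
\|T\nabla_Hw_\lambda\|_{L^2(B(0,r/4))}=\lambda^{3-Q/2}\|T\nabla_Hw\|_{L^2(\mathbb{H}^n)},\qquad
\|w_\lambda\|_{HW^{1,2}(\mathbb{H}^n)}+\|f_\lambda\|_{L^2(\Omega)}=O(\lambda^{2-Q/2}),
\]
with $T\nabla_Hw\not\equiv0$. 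Superposing disjoint rescaled copies shows that even the qualitative claim $Tu\in HW^{1,2}_{\mathrm{loc}}(\Omega)$ fails. So no argument reaches \eqref{eq17} without strengthening the hypothesis on $f$, by one horizontal derivative.

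Independently, your route to \eqref{eq16} breaks at the endpoint. The path bound applied to $D^{1/2}_hu$ controls the second difference $\tau_{2h}u-2\tau_hu+u$ in $L^2$ by $C|h|$. At order exactly $1$ this does not yield $\sup_h\|D^1_hu\|_{L^2}<\infty$ (Zygmund versus Lipschitz). Capogna's scheme, which the paper follows, instead passes to $\partial_t^\beta$ with $\beta<\alpha$ (Lemma \ref{Le24}) and accepts $\epsilon$-losses. It inserts the exponent $\tfrac56$ so that $(\tfrac12-\epsilon_0)+(\tfrac56-\epsilon_0)=1$.

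Be aware that the paper does not supply the idea missing from your $\alpha=1$ step either. At $\alpha=\tfrac56$ and $\alpha=1$ it simply repeats \eqref{eq31}--\eqref{eq312}. Its control \eqref{eq39} of the $f$-term, however, used Lemma \ref{Le23}, which is specific to $D_{T,h,1/2}$. Indeed \eqref{eq316} has degree $\tfrac83>2$ and fails under the same dilation.

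The scaling-consistent statement $Tu\in L^2_{\mathrm{loc}}$ should be reached by a route that never uses an energy estimate above degree $2$. For example, first prove $\nabla_H^2u\in L^2_{\mathrm{loc}}$ without using $Tu$, then write $Tu=X_1X_{n+1}u-X_{n+1}X_1u$.
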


Building upon the estimates for the vertical derivatives, we have the following second-order horizontal regularity:

\begin{theorem}\label{Th12}
Under the assumptions of Theorem \ref{Th11}, the weak solution $u$ possesses second-order horizontal derivatives in the local sense, i.e., $u \in HW_{\mathrm{loc}}^{2,2}(\Omega)$. Moreover, for any $B(x_0, 2r) \subset \Omega$, there exists a constant $c = c(Q,s) > 0$ such that
\begin{equation}\label{eq18}
\| \nabla_H^2 u \|_{L^2(B(x_0, r/4))} \le c \left( \| u \|_{HW^{1,2}(\mathbb{H}^n)} + \| f \|_{L^2(\Omega)} \right).
\end{equation}
\end{theorem}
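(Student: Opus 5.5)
The plan is to apply horizontal difference quotients to the weak formulation \eqref{eq15}, localized by a cutoff, and to use Theorem \ref{Th11} to absorb the error terms coming from the commutator $[X_i,X_{i+n}]=cT$. Fix $B(x_0,2r)\subset\Omega$ and a cutoff $\eta\in C_0^\infty(B(x_0,r/2))$ with $\eta\equiv1$ on $B(x_0,r/4)$ and $|\nabla_H\eta|\le c/r$. For a horizontal direction $X_l$ and $|h|$ small, let $D_{l,h}v(x)=\big(v(x\circ e^{hX_l})-v(x)\big)/h$ denote the right-translation difference quotient. We test \eqref{eq15} with $\varphi=D_{l,-h}(\eta^2 D_{l,h}u)$, which lies in $HW^{1,2}_0(\Omega)$ for small $h$.

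\emph{Local term.} Right translations do not commute with the left-invariant fields $X_j$. However, $D_{l,h}X_j u - X_j D_{l,h}u$ is controlled by a difference quotient of the commutator, and hence by $Tu$ averaged along the segment $x\circ e^{\tau X_l}$, $0\le\tau\le h$. Discrete integration by parts then gives
\[
\int\eta^2|\nabla_H D_{l,h}u|^2\,dx \le \text{(cutoff terms)} + c\int_{B(x_0,r/2)}\big(|\nabla_H u|^2+|Tu|^2+|f|^2\big)\,dx + \varepsilon\int\eta^2|\nabla_H D_{l,h}u|^2\,dx ,
\]
where the $f$-term is handled via $\|D_{l,-h}w\|_{L^2}\le c\|X_l w\|_{L^2}$ with $w=\eta^2D_{l,h}u$, followed by Young's inequality. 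By \eqref{eq16}, $\|Tu\|_{L^2(B(x_0,r/2))}$ is already bounded by the right-hand side of \eqref{eq18}. This is exactly why the horizontal estimate has to come after Theorem \ref{Th11}.

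\emph{Nonlocal term.} This is the step I expect to be the main obstacle. The kernel $\|y^{-1}\circ x\|^{-Q-2s}$ is invariant under simultaneous left translation, but not under the right translation used in $D_{l,h}$. To deal with this, I would use the substitution $x\mapsto x\circ e^{hX_l}$, $y\mapsto y\circ e^{hX_l}$ in the double integral, which preserves Haar measure. The kernel then changes by a factor $1+O(|h|\,\|y^{-1}\circ x\|^{-1})$ locally and by a bounded factor for far-away pairs. I would split the double integral into $B\times B$ (with $B=B(x_0,r)$) and the tails:
\begin{itemize}
\item On $B\times B$, the symmetric part gives a nonnegative Gagliardo-type quantity that can be discarded. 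The asymmetric remainder is controlled by $[u]_{H^s}$ and by $\|\nabla_H u\|_{L^2}$ through the embedding $HW^{1,2}\hookrightarrow H^s$, since $s<1$.
\item The tails are bounded by $\|u\|_{L^2(\mathbb{H}^n)}$ times $\int\eta^2|D_{l,h}u|$, using $\operatorname{dist}(\operatorname{supp}\eta,\mathbb{H}^n\setminus B)\ge r/2$.
\end{itemize}
If the kernel-asymmetry bookkeeping proves awkward, the fallback is to use the mixed estimate \eqref{eq17} in place of \eqref{eq16} to absorb the commutator contributions. The total nonlocal contribution is bounded by $c\big(\|u\|_{HW^{1,2}(\mathbb{H}^n)}^2+\varepsilon\int\eta^2|\nabla_H D_{l,h}u|^2\big)$.

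\emph{Conclusion.} Choosing $\varepsilon$ small and absorbing gives
\[
\sup_h\|\nabla_H D_{l,h}u\|_{L^2(B(x_0,r/4))}\le c\big(\|u\|_{HW^{1,2}(\mathbb{H}^n)}+\|f\|_{L^2(\Omega)}\big).
\]
The standard characterization of Sobolev functions via difference quotients, valid in $\mathbb{H}^n$ for right translations along horizontal directions, then shows that $X_lX_ju\in L^2(B(x_0,r/4))$ with the same bound, for all $l,j$. Summing over $l,j$ yields \eqref{eq18}, and since the ball $B(x_0,2r)\subset\Omega$ was arbitrary, $u\in HW^{2,2}_{\mathrm{loc}}(\Omega)$.
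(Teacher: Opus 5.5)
Your overall strategy is the paper's (Section 4). You use right-translation horizontal difference quotients, charge the commutator terms from $[X_l,X_{l\pm n}]=\pm T$ to $Tu$ via \eqref{eq16} and absorb them, discard the nonnegative Gagliardo part, and bound the cutoff terms by $\|u\|_{HW^{1,2}(\mathbb{H}^n)}$. You are also right that $\|y^{-1}\circ x\|_{\mathbb{H}^n}^{-Q-2s}$ is not invariant under simultaneous right translation $x\mapsto xe^{hX_l}$, $y\mapsto ye^{hX_l}$. The paper's passage from \eqref{eq41} to \eqref{eq42} via \eqref{eq116} tacitly treats it as invariant, so the paper gives no guidance at exactly this point.

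\textbf{The gap is in your repair.} We have $e^{-hX_l}\circ z\circ e^{hX_l}=z\circ e^{\delta T}$, with $\delta=-hz_{n+l}$ for $l\le n$ and $\delta=hz_{l-n}$ for $l>n$, so $|\delta|\le|h|\,|z'|$. Write $w=\eta^2D_{l,h}u$, $K(z)=\|z\|_{\mathbb{H}^n}^{-Q-2s}$, and evaluate $\delta$ at $z=y^{-1}\circ x$. Up to sign, the asymmetric remainder is
\[
R_h=\frac1h\iint_{\mathbb{H}^n\times\mathbb{H}^n}\big(u(xe^{hX_l})-u(ye^{hX_l})\big)\big(w(x)-w(y)\big)\Big[K\big(y^{-1}\circ x\circ e^{\delta T}\big)-K\big(y^{-1}\circ x\big)\Big]dx\,dy .
\]
The kernel defect has size $|h|\,\|z\|_{\mathbb{H}^n}^{-Q-2s-1}$, and the prefactor $1/h$ cancels the $h$. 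So $R_h$ is a bilinear form of order $2s+1$, not $2s$, pairing $u$ with $w$. It is not controlled by $[u]_{HW^{s,2}}$ and $\|\nabla_H u\|_{L^2}$.

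With Gagliardo-type Cauchy--Schwarz, the bound you actually get is $[u]_{HW^{a,2}}\,[w]_{HW^{2s+1-a,2}}$. The first factor requires $a<1$. Absorbing the second factor into $\int\eta^2|\nabla_H D_{l,h}u|^2$ by interpolation requires $2s+1-a<1$. Both hold only if $s<1/2$, so for $s\in[1/2,1)$ your main line does not close.

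\textbf{The missing idea} is that the defect is a \emph{vertical} translation of the kernel. Since $T$ is central and $\delta$ depends only on $x'-y'$, the substitution $x\mapsto x\circ e^{-\delta T}$ (for fixed $y$) is measure preserving. It turns $K(y^{-1}\circ x\circ e^{\delta T})$ into $K(y^{-1}\circ x)$. This moves the defect off the kernel and onto two vertical differences,
$\frac1h\big(u(\cdot\,e^{hX_l}e^{-\delta T})-u(\cdot\,e^{hX_l})\big)$ and $\frac1h\big(w(\cdot\,e^{-\delta T})-w\big)$, with $|\delta|/|h|\le\|y^{-1}\circ x\|_{\mathbb{H}^n}$.

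These differences are controlled by $Tu$ and by $Tw=2\eta\,T\eta\,D_{l,h}u+\eta^2D_{l,h}Tu$, that is, by \eqref{eq16} \emph{and} \eqref{eq17}. Near the diagonal, after splitting $w(xe^{-\delta T})-w(y)$ into its vertical and horizontal parts, every resulting term carries a kernel $\lesssim\|y^{-1}\circ x\|_{\mathbb{H}^n}^{2-Q-2s}$. This is integrable for all $s<1$, and $\|\nabla_H w\|_{L^2}$ enters at most linearly, so it can be absorbed. The region $\|y^{-1}\circ x\|_{\mathbb{H}^n}\ge\lambda$ is handled by your kernel-smoothness bound.

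So Theorem \ref{Th11} must enter the nonlocal term as well, not only the local commutators. Your ``fallback'' points in this direction, but as written it is not an argument.
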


Finally, by applying a truncation argument with refined difference quotient estimates, we can establish the higher regularity of the weak solution as follows:

\begin{theorem}\label{Th13}
Let $m \in \mathbb{N} \cup \{0\}$. Suppose $f \in HW^{m,2}(\Omega)$ and $u \in HW^{1,2}(\mathbb{H}^n)$ is a weak solution to \eqref{eq11}. Then $u$ belongs to the higher-order horizontal Sobolev space $HW_{\mathrm{loc}}^{m+2,2}(\Omega)$.
\end{theorem}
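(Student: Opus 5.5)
We argue by induction on $m$. The case $m=0$ is Theorem \ref{Th12}, together with $Tu\in HW^{1,2}_{\mathrm{loc}}(\Omega)$ from Theorem \ref{Th11}. For the inductive step we assume $f\in HW^{m,2}(\Omega)$ and want to show $u\in HW^{m+2,2}_{\mathrm{loc}}(\Omega)$. By induction we already know $u\in HW^{m+1,2}_{\mathrm{loc}}(\Omega)$, and we also carry along the auxiliary statement that $T^ku\in HW^{m+1-2k,2}_{\mathrm{loc}}$ in the appropriate sense. The key difficulty is that the nonlocal term involves $u$ on all of $\mathbb{H}^n$, while $u$ is only globally in $HW^{1,2}$. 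So we cannot differentiate the equation globally.

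\emph{Truncation (localization).} Fix $B(x_0,2r)\subset\Omega$ and a cutoff $\eta\in C_0^\infty(B(x_0,r))$ with $\eta\equiv1$ on $B(x_0,r/2)$. Set $w=\eta u$. Testing \eqref{eq15} with $\varphi$ supported in $B(x_0,r/4)$, we see that $w$ solves
\[
-\Delta_{\mathbb{H}^n}w+(-\Delta_{\mathbb{H}^n})^s w=f+g \quad\text{in } B(x_0,r/4).
\]
Here $g(x)=c_{Q,s}\int_{\mathbb{H}^n}\frac{((1-\eta)u)(y)}{\|y^{-1}\circ x\|^{Q+2s}}\,dy$. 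Since $(1-\eta)u$ vanishes on $B(x_0,r/2)$, the kernel is smooth for $x\in B(x_0,r/4)$, $y\notin B(x_0,r/2)$. Differentiating under the integral sign, with $\|u\|_{L^2(\mathbb{H}^n)}$ and the integrable tail $\|y^{-1}\circ x\|^{-Q-2s}$ (Cauchy–Schwarz against $\|\cdot\|^{-(Q+2s)}\in L^2$ off a ball), gives $g\in C^\infty(B(x_0,r/4))$ with all $HW^{k,2}$ norms bounded by $c\|u\|_{L^2(\mathbb{H}^n)}$. Thus the right-hand side $F=f+g$ lies in $HW^{m,2}(B(x_0,r/4))$, and the problem reduces to one for the compactly supported function $w$.

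\emph{Differentiation via difference quotients.} For a left-invariant field $Z\in\{X_1,\dots,X_{2n},T\}$ we use the right translations $\Delta_{Z,h}v(x)=\frac{v(x\circ\exp(hZ))-v(x)}{h}$. These commute with the nonlocal operator, because the kernel depends only on $y^{-1}\circ x$ and right translation preserves it. Hence the nonlocal term, after testing with $\Delta_{Z,-h}(\zeta^2\Delta_{Z,h}w)$, contributes a nonnegative quantity plus commutator terms from $\zeta^2$. Those commutator terms are bounded by the fractional seminorm of $\Delta_{Z,h}w$, which is controlled by $\|\nabla_H\Delta_{Z,h}w\|_{L^2}$ on compact support. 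The local term behaves exactly as in the proof of Theorem \ref{Th12}. For $Z=T$ there is no commutator with $X_i$. For $Z=X_j$ the commutator $[X_i,X_j]=\pm4T$ (or its constant multiple) produces terms involving $T w$, which are controlled by the $T$-regularity already obtained (first $T$, then horizontal). Iterating gives: if $F\in HW^{k,2}$ and all $Z^\alpha w$ with $|\alpha|_{\text{hom}}\le k$ are in $HW^{1,2}$ locally, then $\nabla_H^2 Z^\alpha w\in L^2$. Equivalently, applying Theorem \ref{Th12} to $v=Z^\alpha w$ (which solves the same type of equation with right-hand side $Z^\alpha F$ plus commutator terms of lower horizontal order) yields $v\in HW^{2,2}_{\mathrm{loc}}$. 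Ordering the $Z$'s so that $T$ counts as order two, the induction closes and gives $\nabla_H^{m+2}u\in L^2_{\mathrm{loc}}$.

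\emph{Expected obstacle.} The hardest step is keeping the localized problem in the hypothesis class of Theorems \ref{Th11}–\ref{Th12}. Those results require a global $HW^{1,2}(\mathbb{H}^n)$ solution, while $Z^\alpha w$ is only known locally. The fix is to re-truncate at each stage with nested cutoffs $\eta_k$ on shrinking balls. Then $\eta_k Z^\alpha w\in HW^{1,2}(\mathbb{H}^n)$ with compact support, and the new tail term $g_k$ is again smooth by the kernel argument above. One must also check that $\Delta_{T,h}$ quotients at scale $h^{1/2}$ (the fractional difference quotients of Theorem \ref{Th11}) are compatible with the cutoffs. This is where the commutator estimates of Theorem \ref{Th11} are reused verbatim.
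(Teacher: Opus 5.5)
Your architecture matches the paper's. The truncation $w=\eta u$ with a smooth tail $g$ is Lemmas~\ref{Le53}--\ref{Le54}. Differentiating the equation by right-translation difference quotients and feeding the result back into the second-order estimate is Lemma~\ref{Le51} together with Proposition~\ref{Pro52}. Two differences: the paper truncates once and inducts under a global hypothesis $u\in HW^{m+1,2}(\mathbb{H}^n)$ instead of re-truncating at each stage. Also, your bookkeeping of the local commutators through $Tw$ (``first $T$, then horizontal'') is more careful than \eqref{eq53}, which omits the term $[X_j,Z]u(xe^{hZ})$ produced by Lemma~\ref{Le21}.

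\textbf{The gap.} The step everything rests on is false: right translations do not preserve the kernel. The kernel is left-invariant, since $(gy)^{-1}\circ(gx)=y^{-1}\circ x$. But $(ye^{hZ})^{-1}\circ(xe^{hZ})=e^{-hZ}\circ(y^{-1}\circ x)\circ e^{hZ}$ is a conjugate. By \eqref{eq21}, for $Z=X_j$ with $1\le j\le n$ (the case $X_{n+j}$ is analogous) and $y^{-1}\circ x=(z,t)$, this conjugate equals $(z,t-hz_{n+j})$. Its Kor\'{a}nyi norm differs from $\|(z,t)\|_{\mathbb{H}^n}$. Only $Z=T$ is harmless, because $e^{hT}$ is central.

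\textbf{Consequence.} When you test with $\Delta_{Z,-h}(\zeta^2\Delta_{Z,h}w)$ and change variables, you get more than the coercive part and the $\zeta$-terms. There is an additional term whose kernel is $h^{-1}[K(z,t-hz_{n+j})-K(z,t)]$, with $K=\|\cdot\|_{\mathbb{H}^n}^{-Q-2s}$. This tends to $-z_{n+j}\,\partial_tK(z,t)$, a kernel homogeneous of degree $-(Q+2s+1)$. Equivalently, $Z^\alpha w$ solves an equation ``of the same type'' only after a $[Z,(-\Delta_{\mathbb{H}^n})^s]$-type term is added to the right-hand side, and your sketch never estimates that term.

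\textbf{This term is not negligible.} For $s\ge\frac12$ the limiting bilinear form is not even absolutely convergent on smooth functions: near the diagonal the integrand behaves like $\|y^{-1}\circ x\|^{1-Q-2s}$. So an $h$-uniform bound must exploit cancellation. One way: $(\partial_tK)(y^{-1}\circ x)=T_x[K(y^{-1}\circ x)]$, while $z_{n+j}=x_{n+j}-y_{n+j}$ does not depend on the vertical variable. You can therefore integrate $T$ by parts onto $u$ and the test function, then control the resulting terms with the vertical regularity of Theorem~\ref{Th11}.

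The paper does not fill this in either. Identity \eqref{eq56} in Lemma~\ref{Le51}, and the nonlocal term in \eqref{eq42}, come from exactly the same change of variables. Following the paper's proof will therefore not close this gap.
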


This paper is organized as follows. In Section 2, we recall the fundamental preliminaries concerning the Heisenberg group $\mathbb{H}^n$, the definition of horizontal Sobolev spaces, and several key lemmas regarding difference quotient estimates. Section 3 is devoted to establishing the weak differentiability of the vertical derivative $Tu$ via fractional difference quotients, thereby proving Theorem \ref{Th11}. Building upon the estimates obtained in Section 3, we further deduce the second-order horizontal regularity and complete the proof of Theorem \ref{Th12} in Section 4. Finally, in Section 5, by combining a truncation argument with the difference quotient method, we establish the higher-order weak differentiability of the gradients and present the complete proof of Theorem \ref{Th13}.

\section{Preliminaries}
\subsection{The Heisenberg group $\mathbb{H}^n$}

The Heisenberg group $\mathbb{H}^n$ is identified with the Euclidean space $\mathbb{R}^{2n+1}$ ($n \ge 1$), equipped with the non-commutative group multiplication defined by
\begin{equation}\label{eq21}
x \circ y = \left( x_1 + y_1, \dots, x_{2n} + y_{2n}, t + \tau + \frac{1}{2}\sum_{i=1}^n (x_i y_{n+i} - x_{n+i} y_i) \right),
\end{equation}
for $x = (x', t) = (x_1, \dots, x_{2n}, t)$ and $y = (y', \tau) = (y_1, \dots, y_{2n}, \tau)$ in $\mathbb{R}^{2n+1}$. The basis for the Lie algebra of left-invariant vector fields on $\mathbb{H}^n$ is given by
\begin{equation}\label{eq22}
X_i = \partial_{x_i} - \frac{x_{n+i}}{2}\partial_t, \quad X_{n+i} = \partial_{x_{n+i}} + \frac{x_i}{2}\partial_t, \quad i = 1, \dots, n.
\end{equation}
The unique non-trivial commutation relation is represented by 
\[ T = \partial_t = [X_i, X_{n+i}] = X_i X_{n+i} - X_{n+i} X_i, \quad i = 1, \dots, n. \]
We refer to $\{X_1, \dots, X_{2n}\}$ as the horizontal vector fields, which span the horizontal distribution, and $T$ as the vertical vector field. For a smooth function $u$ on $\mathbb{H}^n$, the horizontal gradient of $u$ is denoted by $\nabla_H u = (X_1 u, \dots, X_{2n} u)$, and the subLaplace operator is denoted as $\Delta_{\mathbb{H}^n} = \sum_{i=1}^{2n} X_i^2$. Note that the Haar measure on $\mathbb{H}^n$ coincides with the standard Lebesgue measure on $\mathbb{R}^{2n+1}$, and we denote the measure of a set $E \subset \mathbb{H}^n$ by $|E|$.

The intrinsic geometry of $\mathbb{H}^n$ is characterized by the Carnot-Carath\'{e}odory metric (C-C metric), denoted by $d(x, y)$, which represents the infimum of the lengths of all horizontal curves connecting $x$ and $y$. The corresponding metric ball is $B_R(x) = \{y \in \mathbb{H}^n : d(y, x) < R\}$. This metric is globally equivalent to the Kor\'{a}nyi gauge metric $d(x, y) = \|x^{-1} \circ y\|_{\mathbb{H}^n}$, here the homogeneous norm is defined as
\[ \|x\|_{\mathbb{H}^n} = \left( \left( \sum_{i=1}^{2n} x_i^2 \right)^2 + t^2 \right)^{1/4}. \]

Any left-invariant vector field $Z$ can be expressed as a linear combination $Z = \sum_{l=1}^{2n} z_l X_l + z_{2n+1} T$. The exponential map $\exp: \mathfrak{h}^n \to \mathbb{H}^n$ provides a global diffeomorphism. Given $Z$ and $Y$ in the Lie algebra, the Baker-Campbell-Hausdorff formula implies
\begin{equation}\label{eq23}
e^Z e^Y =e^{ Z + Y + \frac{1}{2}[Z, Y] }.
\end{equation}

To analyze the regularity of functions, we introduce the Nirenberg difference operators along a vector field $Z$. For $h \in \mathbb{R} \setminus \{0\}$, the first and second-order differences are defined respectively by
\begin{align}
\Delta_{Z,h} v(x) &= v(x e^{hZ}) - v(x), \label{eq24} \\
\Delta_{Z,h}^2 v(x) &= v(x e^{hZ}) + v(x e^{-hZ}) - 2v(x). \label{eq25}
\end{align}
For $\alpha \in (0, 1]$, we define the fractional difference quotients in the $Z$-direction as
\begin{equation}\label{eq26}
D_{Z,h,\alpha} v(x) = \frac{v(x e^{hZ}) - v(x)}{|h|^\alpha}, \quad D_{Z,-h,\alpha} v(x) = \frac{v(x e^{-hZ}) - v(x)}{-|h|^\alpha}, \quad h > 0.
\end{equation}
Specifically, when $\alpha=1$ and $h \in \mathbb{R}$, we write $v_h(x) = D_{Z,h} v(x) = \frac{v(x e^{hZ}) - v(x)}{h}$. The mixed quadratic difference quotient is given by
\begin{equation}\label{eq28}
D_{Z,-h,\alpha} D_{Z,h,\beta} v(x) = \frac{\Delta_{Z,h}^2 v(x)}{|h|^{\alpha+\beta}}.
\end{equation}
The integration-by-parts formula for these operators holds: for $f, g \in L_{\mathrm{loc}}^1(\Omega)$,
\begin{equation}\label{eq116}
\int_\Omega f(x) D_{Z,h,\alpha} g(x) \, dx = - \int_\Omega D_{Z,-h,\alpha} f(x) g(x) \, dx.
\end{equation}
Further details can be found in \cite{CL97, DA04}.

Finally, we define the relevant functional spaces. For $1 \le p < \infty$ and $k \in \mathbb{N}^+$, the horizontal Sobolev space $HW^{k,p}(\Omega)$ is defined as the set of functions $u \in L^p(\Omega)$ such that all distributional horizontal derivatives up to order $k$ belong to $L^p(\Omega)$. This space is a Banach space equipped with the norm
\[ \|u\|_{HW^{k,p}(\Omega)} = \|u\|_{L^p(\Omega)} + \sum_{m=1}^k \|\nabla_H^m u\|_{L^p(\Omega)}. \]
The local horizontal Sobolev space $HW_{\mathrm{loc}}^{k,p}(\Omega)$ consists of functions $u$ such that $u \in HW^{k,p}(\Omega')$ for every compactly contained subdomain $\Omega' \subset \subset \Omega$. Furthermore, we denote by $HW_0^{k,p}(\Omega)$ the completion of $C_0^\infty(\Omega)$ with respect to the $HW^{k,p}(\Omega)$ norm.

For $s \in (0, 1)$, the fractional horizontal Sobolev space $HW^{s,p}(\mathbb{H}^n)$ is characterized by the Gagliardo-type semi-norm:
\[ [u]_{HW^{s,p}(\mathbb{H}^n)} = \left( \iint_{{\mathbb{H}^n} \times{\mathbb{H}^n}} \frac{|u(x) - u(y)|^p}{\|y^{-1} \circ x\|_{\mathbb{H}^n}^{Q+sp}} \, dx \, dy \right)^{1/p}, \]
where $Q = 2n+2$ is the homogeneous dimension of $\mathbb{H}^n$. The space $HW^{s,p}(\mathbb{H}^n)$ is endowed with the natural norm $\|u\|_{HW^{s,p}} = (\|u\|_{L^p}^p + [u]_{HW^{s,p}}^p)^{1/p}$.

\subsection{Preliminary Lemmas}

We collect several fundamental results concerning finite difference quotients and derivative estimates in the Heisenberg group $\mathbb{H}^n$. Let us begin with a version of the Campbell-Hausdorff formula adapted to left-invariant vector fields.

\begin{lemma}[Campbell-Hausdorff Formula, \cite{MZZ07}, Lemma 2.3]\label{Le21}
Let $\Omega \subset \mathbb{H}^n$ be an open set, and let $Z$ and $Y$ be two left-invariant vector fields. Suppose $v \in HW^{1,p}(\Omega)$ such that $[Y,Z]v \in L_{loc}^p(\Omega)$ for $1 \le p < \infty$. Setting $\tilde{v}(x) := v(x e^{hZ})$, then $Y\tilde{v} \in L_{loc}^p(\Omega)$, and for $x, x e^{hZ} \in \Omega$, we have
\begin{equation}\label{eq29}
Y(v(x e^{hZ})) = Y\tilde{v}(x) = Yv(x e^{hZ}) + h[Y,Z]v(x e^{hZ}).
\end{equation}
Furthermore, for any $h \neq 0$, the following commutation identity holds:
\begin{equation}\label{eq210}
Y(\Delta_{Z,h}v(x)) = \Delta_{Z,h}(Yv)(x) + h[Y,Z]v(x e^{hZ}).
\end{equation}
\end{lemma}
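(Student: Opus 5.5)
The plan is to reduce everything to the one-parameter flow $h\mapsto x e^{hZ}$ and the group law \eqref{eq21}. I would prove the statement first for smooth $v$ and then pass to general $v$ by approximation. The basic tool is the group law together with the Baker--Campbell--Hausdorff formula \eqref{eq23}. Since $\mathbb{H}^n$ is two-step nilpotent, $[Y,Z]$ is central (it is a multiple of $T$) and all higher brackets vanish.

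For smooth $v$, I would compute $Y\tilde v(x)$ directly from the definition of a left-invariant vector field:
\[
Y\tilde v(x)=\frac{d}{d\varepsilon}\Big|_{\varepsilon=0} v\big(x e^{\varepsilon Y} e^{hZ}\big).
\]
Next I rewrite $e^{\varepsilon Y}e^{hZ}$ using \eqref{eq23} twice. On one side, $e^{\varepsilon Y}e^{hZ}=e^{\varepsilon Y+hZ+\frac{\varepsilon h}{2}[Y,Z]}$. On the other, $e^{hZ}e^{\varepsilon Y}e^{\varepsilon h[Y,Z]}=e^{\varepsilon Y+hZ-\frac{\varepsilon h}{2}[Y,Z]+\varepsilon h[Y,Z]}$, where the central term simply adds. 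Comparing the two gives
\[
e^{\varepsilon Y}e^{hZ}=e^{hZ}\,e^{\varepsilon(Y+h[Y,Z])}+O(\varepsilon^2).
\]
The exact identity even holds with the central factor split off. Differentiating in $\varepsilon$ at $0$ then yields
\[
Y\tilde v(x)=\big(Y+h[Y,Z]\big)v(xe^{hZ}),
\]
which is \eqref{eq29}. Subtracting $Yv(x)$ from both sides gives \eqref{eq210} immediately, because $\Delta_{Z,h}(Yv)(x)=Yv(xe^{hZ})-Yv(x)$.

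For general $v\in HW^{1,p}(\Omega)$ with $[Y,Z]v\in L^p_{loc}$, I would pass to the limit in three steps.
\begin{itemize}
\item Mollify with a group convolution, $v_\delta=v*\rho_\delta$, using right-translation-friendly mollification. Convolution on the appropriate side commutes with left-invariant fields, so $Yv_\delta\to Yv$ and $[Y,Z]v_\delta\to[Y,Z]v$ in $L^p_{loc}$.
\item Use that right translation $x\mapsto xe^{hZ}$ preserves Lebesgue (Haar) measure and maps compact subsets of $\{x: x,xe^{hZ}\in\Omega\}$ to compact subsets of $\Omega$. Hence $\tilde v_\delta\to\tilde v$ and the right-hand side of \eqref{eq29} converges in $L^p_{loc}$.
\item Test the smooth identity against $\varphi\in C_0^\infty$, using $\int \tilde v_\delta\,Y\varphi=-\int Y\tilde v_\delta\,\varphi$, and pass to the limit. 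This shows that the distributional derivative $Y\tilde v$ equals the stated $L^p_{loc}$ function.
\end{itemize}

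The main obstacle is the mollification step. Left-invariant fields commute with convolution only on one side, so one must choose $v*\rho_\delta$ (not $\rho_\delta*v$) so that $Y(v*\rho_\delta)=(Yv)*\rho_\delta$. One must also track the domain shrinkage under both the mollifier and the translation $e^{hZ}$. If that becomes awkward, the alternative is a direct distributional computation: change variables $y=xe^{hZ}$ in $\int v(xe^{hZ})Y\varphi(x)\,dx$, then apply the smooth formula to the test function $\varphi(ye^{-hZ})$. This requires the identity only for smooth functions, which the BCH computation above already gives.
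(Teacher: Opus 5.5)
The paper does not prove this lemma; it quotes it from \cite{MZZ07}, and the standard argument behind it is your Baker--Campbell--Hausdorff identity $e^{\varepsilon Y}e^{hZ}=e^{hZ}e^{\varepsilon(Y+h[Y,Z])}$ applied to smooth functions. Your smooth computation is correct, including the sign of the commutator term, and \eqref{eq210} follows from \eqref{eq29} exactly as you say.

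There is one concrete error, in the step you yourself single out as the main obstacle: you have the sides of the convolution reversed. With the usual convention $(f*g)(x)=\int f(y)g(y^{-1}x)\,dy$, a left-invariant field acts on the second factor, $Y(f*g)=f*(Yg)$, because $x\mapsto g(y^{-1}x)$ is a left translate. So the mollification that commutes with $Y$ is $\rho_\delta*v$, which averages the left translates $v(y^{-1}x)$. Your $v*\rho_\delta(x)=\int v(xy^{-1})\rho_\delta(y)\,dy$ averages right translates instead. That is precisely the operation that does \emph{not} commute with $Y$; its failure to commute is the content of \eqref{eq29}.

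You can fix this by swapping the factors. Better, drop mollification and use your duality argument, which is already complete. Take $\varphi\in C_0^\infty(\{x: xe^{hZ}\in\Omega\})$ and set $\psi(y)=\varphi(ye^{-hZ})$, so $\psi\in C_0^\infty(\Omega)$. The smooth identity with $-h$, together with centrality of $[Y,Z]$, gives $(Y\varphi)(ye^{-hZ})=Y\psi(y)+h[Y,Z]\psi(y)$. Integrate by parts against $v$, using that left-invariant fields are divergence free, and change variables back. This yields \eqref{eq29} in the sense of distributions, with no density argument at all.

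Both routes use $Yv\in L^p_{loc}$. The hypothesis $v\in HW^{1,p}(\Omega)$ guarantees this only for horizontal $Y$. That restriction is implicit in the statement, and it holds in the applications made in the paper.
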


The following lemma characterizes the Sobolev space $HW^{1,p}$ in terms of the boundedness of finite difference quotients.

\begin{lemma}[\cite{CL97}, Proposition 2.3]\label{Le22}
Let $\Omega \subset \mathbb{H}^n$ be an open set and $K \subset \Omega$ be a compact subset. Let $Z$ be a left-invariant vector field and $v \in L_{loc}^p(\Omega)$ for $1 \le p < \infty$. If there exist constants $\tilde{h} > 0$ and $c > 0$ such that
\[
\sup_{0 < |h| < \tilde{h}} \int_K |{D_{Z,h,1}}v(x)|^p \, dx \le c^p,
\]
then the distributional derivative $Zv$ belongs to $L^p(K)$ and $\|Zv\|_{L^p(K)} \le c$. Conversely, if $Zv \in L^p(K)$, then for sufficiently small $h$,
\[
\sup_{0 < |h| < \tilde{h}} \int_K |{D_{Z,h,1}}v(x)|^p \, dx \le (2\|Zv\|_{L^p(K)})^p.
\]
\end{lemma}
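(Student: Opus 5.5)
The plan is the standard Euclidean difference-quotient argument, transplanted to left translations $x\mapsto xe^{hZ}$ along the left-invariant field $Z$.

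\emph{Sufficiency.} Write $v_h=D_{Z,h,1}v$. The hypothesis says $\{v_h\}_{0<|h|<\tilde h}$ is bounded in $L^p(K)$ by $c$. I distinguish $p>1$ from $p=1$.

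For $1<p<\infty$, reflexivity gives a sequence $h_k\to0$ and some $w\in L^p(K)$ with $v_{h_k}\rightharpoonup w$ weakly in $L^p(K)$. Weak lower semicontinuity of the norm gives $\|w\|_{L^p(K)}\le c$.

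To identify $w$, fix $\varphi\in C_0^\infty(\mathrm{int}\,K)$. For $|h|$ small, $\operatorname{supp}\varphi\, e^{\pm hZ}$ stays inside $K$. The integration-by-parts formula \eqref{eq116} (with $\alpha=1$) then gives
\[
\int_K v_h\,\varphi\,dx=-\int_K v\,D_{Z,-h,1}\varphi\,dx .
\]
This uses that right translation $x\mapsto xe^{hZ}$ preserves Haar (Lebesgue) measure.

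Since $\varphi$ is smooth, $D_{Z,-h,1}\varphi\to Z\varphi$ uniformly with supports in a fixed compact set. Here I use $\frac{d}{dh}\varphi(xe^{hZ})|_{h=0}=Z\varphi(x)$, because $Z$ is left-invariant. Passing to the limit along $h_k$ yields
\[
\int w\varphi\,dx=-\int vZ\varphi\,dx ,
\]
so $Zv=w$ in the distributional sense on the interior of $K$, and $\|Zv\|_{L^p(K)}\le c$.

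For $p=1$, weak compactness fails. The correct conclusion there is that $Zv$ is a measure with total variation at most $c$. One either works with bounded measures or, as in \cite{CL97}, the statement is intended for $p>1$. I would note this and treat $p>1$ as the main case.

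\emph{Necessity.} First assume $v$ is smooth. Then
\[
v(xe^{hZ})-v(x)=\int_0^h Zv(xe^{\tau Z})\,d\tau .
\]
By Jensen/Minkowski,
\[
\int_K|D_{Z,h,1}v|^p\,dx\le\frac1{|h|}\int_0^{|h|}\int_K|Zv(xe^{\pm\tau Z})|^p\,dx\,d\tau .
\]
By translation invariance of the measure, the inner integral is bounded by $\int_{K_{\tau}}|Zv|^p$, where $K_\tau=Ke^{\pm\tau Z}$.

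For general $v$, I approximate by group mollification $v_\varepsilon=v*\rho_\varepsilon$ (left-invariant convolution). This commutes with the left-invariant $Z$, and $Zv_\varepsilon\to Zv$ in $L^p_{loc}$. The bound then passes to the limit.

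\emph{Main obstacle.} The delicate point is passing from $\int_{K_\tau}|Zv|^p$ back to $\|Zv\|_{L^p(K)}$. This needs $Zv\in L^p$ on a slightly larger neighbourhood and continuity of $\tau\mapsto\int_{K_\tau}|Zv|^p$, via absolute continuity of the integral. It is exactly this slack that the constant $2$ absorbs, for $h$ sufficiently small: $(\|Zv\|_{L^p(K)}+o(1))^p\le(2\|Zv\|_{L^p(K)})^p$. If $Zv\equiv 0$ near $K$, the bound is trivial. Otherwise the continuity argument on a compact neighbourhood $K'\supset K$ on which $Zv\in L^p$ gives the factor $2$, once $\tilde h$ is chosen so small that $\int_{K_\tau\setminus K}|Zv|^p\le(2^p-1)\|Zv\|^p_{L^p(K)}$.
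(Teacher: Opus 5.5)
The paper gives no proof of this lemma; it only quotes \cite{CL97}. Your argument is the standard one behind that citation: weak compactness in $L^p(K)$ plus the discrete integration by parts \eqref{eq116} for sufficiency, and, for the converse, the fundamental theorem of calculus along $\tau\mapsto xe^{\tau Z}$, Jensen's inequality and right-translation invariance of Haar measure. Your sufficiency half is correct for $1<p<\infty$.

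You are also right about both places where the printed statement is too strong. For $p=1$ one only obtains a measure. The converse needs $Zv\in L^p$ on a neighbourhood $K'$ of $K$, not just on $K$. For example, take $Z=X_1$, $K=\{0\le x_1\le 1,\ |x_j|\le 1\ (2\le j\le 2n),\ |t|\le 1\}$ and $v=\mathbf{1}_{\{x_1>1\}}$. Then $X_1v=0$ on $\operatorname{int}K$. But $xe^{hX_1}$ has first coordinate $x_1+h$, so $D_{X_1,h,1}v=h^{-1}$ on $K\cap\{x_1>1-h\}$, and $\int_K|D_{X_1,h,1}v|^p\,dx=2^{2n}h^{1-p}$ for $0<h<1$.

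In the approximation step, make sure the mollifier acts by left translations, $\rho_\varepsilon*v(x)=\int\rho_\varepsilon(y)\,v(y^{-1}x)\,dy$, since that is what commutes with a left-invariant $Z$. With the mollifier on the right, $Z$ falls on $\rho_\varepsilon$. Moving it back onto $v$ produces the conjugated fields $\mathrm{Ad}_wZ$, i.e.\ extra $T$-derivatives of $v$.

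The genuine gap is the last step of the converse. Your ``otherwise'' branch tacitly assumes $\|Zv\|_{L^p(K)}>0$. If $\|Zv\|_{L^p(K)}=0$ while $Zv\not\equiv0$ in every neighbourhood of $K$, the condition you impose on $\tilde h$ reads $\int_{K_\tau\setminus K}|Zv|^p\le 0$, which can fail for every $\tau\ne0$.

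This cannot be repaired, because the inequality itself is false there. Take the same $K$ and $v=(x_1-1)_+$. Then $X_1v=\mathbf{1}_{\{x_1>1\}}\in L^\infty_{loc}$, so your neighbourhood hypothesis holds, and $\|X_1v\|_{L^p(K)}=0$. Yet $\int_K|D_{X_1,h,1}v|^p\,dx=\frac{2^{2n}h}{p+1}>0$ for all $0<h<1$. So no choice of $\tilde h$ gives $\sup_{0<|h|<\tilde h}\int_K|D_{X_1,h,1}v|^p\,dx\le(2\|X_1v\|_{L^p(K)})^p=0$.

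What your computation actually proves, assuming $Zv\in L^p(K')$ with $Ke^{\tau Z}\subset K'\subset\Omega$ for $|\tau|<\tilde h$, is
\[
\sup_{0<|h|<\tilde h}\int_K|D_{Z,h,1}v|^p\,dx\le\sup_{|\tau|<\tilde h}\int_{Ke^{\tau Z}}|Zv|^p\,dy\le\|Zv\|^p_{L^p(K')},\qquad \limsup_{h\to0}\int_K|D_{Z,h,1}v|^p\,dx\le\|Zv\|^p_{L^p(K)} .
\]
The factor-$2$ bound with $\|Zv\|_{L^p(K)}$ on the right follows from the second estimate for small $h$ only when $\|Zv\|_{L^p(K)}>0$, or trivially when $Zv$ vanishes near $K$. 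You should state the converse in this corrected form. The neighbourhood version is the usual one, and it is the form used later in the paper, e.g.\ in the whole-space bound behind \eqref{eq410}.
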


For the vertical vector field $T$, we have the following fractional-order estimate in terms of the horizontal gradient.

\begin{lemma}[\cite{DA04}, Proposition 2.2]\label{Le23}
Let $\Omega \subset \mathbb{H}^n$ be an open set and $v \in HW_{\mathrm{loc}}^{1,p}(\Omega)$ for $1 \le p < \infty$. For any $x_0 \in \Omega$ and $r > 0$ such that $B(x_0, 3r) \subset \Omega$, there exists a constant $c > 0$ such that
\[
\int_{B(x_0, r)} |D_{T,h,1/2}v(x)|^p \, dx \le c \int_{B(x_0, 2r)} (|v|^p + |\nabla_H v|^p) \, dx.
\]
\end{lemma}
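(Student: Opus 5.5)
The plan is to write the vertical translation $e^{hT}$ as a product of horizontal translations, and then reduce the vertical difference to a telescoping sum of horizontal differences. First I reduce to smooth $v$. Horizontal Sobolev functions can be approximated in $HW^{1,p}(B(x_0,5r/2))$ by smooth functions (Friedrichs/Meyers--Serrin mollification adapted to $\mathbb{H}^n$). The left-hand side is continuous in $L^p$ for fixed $h$, so it suffices to prove the estimate for $v\in C^\infty$ with a constant independent of $v$ and $h$. I treat $h>0$; the case $D_{T,-h,1/2}$ is identical with the roles of the two horizontal fields swapped.

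\textbf{Step 1 (algebraic identity).} Set $k=\sqrt{h}$. The group is two-step nilpotent, so \eqref{eq23} is exact. Using $[X_1,X_{n+1}]=T$ and the fact that $T$ is central, repeated application gives
\[
e^{kX_1}e^{kX_{n+1}}e^{-kX_1}e^{-kX_{n+1}} = e^{k^2[X_1,X_{n+1}]} = e^{hT}.
\]
If the sign comes out opposite, I reverse the order of the factors. Put $g_0=x$, $g_1=xe^{kX_1}$, $g_2=g_1e^{kX_{n+1}}$, $g_3=g_2e^{-kX_1}$, $g_4=g_3e^{-kX_{n+1}}=xe^{hT}$. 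Then
\[
v(xe^{hT})-v(x)=\sum_{j=1}^4 \bigl(v(g_j)-v(g_{j-1})\bigr),
\]
and each summand is a horizontal difference $\Delta_{Y_j,\pm k}v(g_{j-1})$ with $Y_j\in\{X_1,X_{n+1}\}$. By the fundamental theorem of calculus along $t\mapsto g_{j-1}e^{tY_j}$,
\[
|v(g_j)-v(g_{j-1})|\le \int_0^k |Y_jv(g_{j-1}e^{\pm tY_j})|\,dt .
\]

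\textbf{Step 2 (integration).} Divide by $h^{1/2}=k$ and apply Jensen's (or H\"older's) inequality in $t$, so the $p$-th power of each term is at most $k^{-1}\int_0^k|Y_jv(g_{j-1}e^{\pm tY_j})|^p\,dt$. Now integrate over $x\in B(x_0,r)$ and use Fubini. Right translation $x\mapsto x\,g$ by a fixed group element preserves Lebesgue (Haar) measure. Moreover, $g_{j-1}e^{\pm tY_j}$ is $x$ right-multiplied by a fixed element of C-C length at most $4k$. Hence
\[
\int_{B(x_0,r)}|D_{T,h,1/2}v|^p\,dx\le 4^{p}\int_{B(x_0,r+4k)}|\nabla_Hv|^p\,dx .
\]
This is $\le c\int_{B(x_0,2r)}|\nabla_H v|^p$ as long as $4k\le r$, i.e.\ $h\le r^2/16$. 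Here I use that the C-C metric is left-invariant, so $d(x,xg)$ equals the length of $g$, and equivalence with the Kor\'anyi gauge only changes constants.

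\textbf{Step 3 (large $h$) and the main obstacle.} For $h> r^2/16$ the estimate is trivial, provided $xe^{hT}$ stays inside the region where $v$ is controlled. The main obstacle is exactly this bookkeeping: $xe^{hT}$ may leave $B(x_0,2r)$ when $h$ is large. I would handle it by restricting the statement to $0<h\le h_0$ with $h_0=cr^2$, which is the only range used later with difference quotients. Alternatively, one can iterate the identity of Step 1 with smaller steps $e^{hT}=(e^{(h/N)T})^N$. In either case, for $h\in[r^2/16,h_0]$ we have $h^{-p/2}\le c r^{-p}$. The bound then follows from $|v(xe^{hT})|^p+|v(x)|^p$, integrated over balls contained in $B(x_0,2r)$ (which holds since $d(x,xe^{hT})\approx h^{1/2}\lesssim r$). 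This gives the term $\int|v|^p$ on the right-hand side, with constant depending on $r$ (scaling), and yields the lemma.
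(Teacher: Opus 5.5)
Your proposal is correct and follows essentially the standard route. The paper gives no proof of its own but cites \cite{DA04}, whose argument (like Capogna's proof of Lemma \ref{Le25}) rests on your factorization $e^{hT}=e^{\sqrt{h}X_1}e^{\sqrt{h}X_{n+1}}e^{-\sqrt{h}X_1}e^{-\sqrt{h}X_{n+1}}$, combined with the horizontal difference-quotient bound of Lemma \ref{Le22} and the invariance of Lebesgue measure under right translations. You are also right that the statement tacitly needs $|h|\lesssim r^2$, since otherwise $xe^{hT}$ may leave $\Omega$, and handling the remaining range of $h$ through the $|v|^p$ term is a legitimate way to account for that term on the right-hand side.
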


The following results established in \cite{CL97}, characterize the relationship between fractional vertical derivatives and horizontal gradients, which plays a pivotal role in our regularity bootstrap argument.

Let $\Omega \subset \mathbb{H}^n$ be an open set, and let $Z$ denote a left-invariant vector field on $\mathbb{H}^n$. For a function $\omega \in L^2(\Omega)$ with compact support in $\Omega$ and for any $\alpha \in (0, 1)$, we define the fractional difference seminorm as
\[\left| \omega  \right|_{Z,\alpha }^2 = \mathop {\sup }\limits_{0 < \varepsilon  < {\varepsilon _0}} \mathop {\sup }\limits_{\left| h \right| < \varepsilon } \int_\Omega  {{{\left| h \right|}^{-2\alpha }}{{\left| {\omega \left( {x{e^{hZ}}} \right) - \omega \left( x \right)} \right|}^2}dx}, \]
here ${\varepsilon _0}$ may be chosen sufficiently small.

Let $w \in C_0^\infty(\mathbb{H}^n)$ and $\alpha \in (0,1)$. The $L^2$-norm of the fractional derivative of $w$ along $T$ is given by the following formula:
\begin{equation*}
    \|\partial_t^\alpha w\|_{L^2(\mathbb{H}^n)}^2 = \int_{\mathbb{H}^n} |h|^{2\alpha} |\hat{w}(x,h)|^2 \, dx \, dh,
\end{equation*}
where $\hat{w}$ denotes the partial Fourier transform with respect to the variable $t$.

\begin{lemma}[\cite{CL97}, Theorem 2.5]\label{Le24}
Let $0 < \beta < \alpha < 1$ and $\omega \in C_0^\infty(\mathbb{H}^n)$. There exists a positive constant $c = c(\alpha, \beta)$ such that
\[
c \|\partial_t^\beta \omega\|_{L^2(\mathbb{H}^n)} \le |\omega|_{T,\alpha} \le c^{-1} \|\partial_t^\alpha \omega\|_{L^2(\mathbb{H}^n)}.
\]
\end{lemma}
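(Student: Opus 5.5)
The proof reduces to one-dimensional Fourier analysis in the $t$-variable. Since $T=\partial_t$ and, by \eqref{eq21}, $x e^{hT}=(x',t+h)$, the vertical difference is just a Euclidean translation in the last coordinate:
\[
\omega(xe^{hT})-\omega(x)=\omega(x',t+h)-\omega(x',t).
\]
For fixed $x'$, Plancherel in $t$ gives
\[
\int_{\mathbb{R}}|\omega(x',t+h)-\omega(x',t)|^2\,dt=\int_{\mathbb{R}}|e^{ih\xi}-1|^2|\hat\omega(x',\xi)|^2\,d\xi ,
\qquad |e^{ih\xi}-1|^2=4\sin^2(h\xi/2).
\]
Integrating in $x'$ turns both inequalities into estimates on the multiplier $4\sin^2(h\xi/2)$.

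\emph{Upper bound.} We use $4\sin^2(h\xi/2)\le \min(4,h^2\xi^2)\le 4|h\xi|^{2\alpha}$ for $\alpha\in(0,1)$. This gives
\[
|h|^{-2\alpha}\int|\omega(xe^{hT})-\omega(x)|^2\,dx\le 4\int|\xi|^{2\alpha}|\hat\omega|^2\,dx'\,d\xi=4\|\partial_t^\alpha\omega\|_{L^2}^2
\]
uniformly in $h$. Taking the suprema yields $|\omega|_{T,\alpha}\le c^{-1}\|\partial_t^\alpha\omega\|_{L^2}$.

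\emph{Lower bound.} Set $M:=|\omega|_{T,\alpha}^2$, so that $\int|\Delta_{T,h}\omega|^2\le M|h|^{2\alpha}$ for $|h|<\varepsilon_0$. We average in $h$ against the weight $h^{-1-2\beta}$ over $(0,\varepsilon_0)$. Because $\alpha>\beta$, the right-hand side integrates to
\[
M\int_0^{\varepsilon_0}h^{2(\alpha-\beta)-1}\,dh=\frac{M\varepsilon_0^{2(\alpha-\beta)}}{2(\alpha-\beta)} .
\]
This is exactly where $\beta<\alpha$ is needed. On the left, Fubini gives $\int m_{\varepsilon_0}(\xi)|\hat\omega|^2$ with
\[
m_{\varepsilon_0}(\xi)=\int_0^{\varepsilon_0}h^{-1-2\beta}4\sin^2(h\xi/2)\,dh .
\]
Substituting $u=h|\xi|$ gives $m_{\varepsilon_0}(\xi)=|\xi|^{2\beta}\int_0^{\varepsilon_0|\xi|}u^{-1-2\beta}4\sin^2(u/2)\,du\ge c_\beta|\xi|^{2\beta}$ whenever $|\xi|\ge 1/\varepsilon_0$. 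Hence the high frequencies $|\xi|\ge1/\varepsilon_0$ contribute at most $c\,M$ to $\|\partial_t^\beta\omega\|_{L^2}^2$.

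\emph{Low frequencies (main obstacle).} For $|\xi|<1/\varepsilon_0$ we only have $|\xi|^{2\beta}|\hat\omega|^2\le \varepsilon_0^{-2\beta}|\hat\omega|^2$, so we must bound $\|\omega\|_{L^2}$ by the seminorm. Here the compact support of $\omega$ is used. Suppose $\operatorname{supp}\omega\subset\{|t|<L\}$ and fix $h=\varepsilon_0/2$. Telescoping gives
\[
\omega(x',t)=-\sum_{k=0}^{N-1}\bigl(\omega(x',t+(k+1)h)-\omega(x',t+kh)\bigr),\qquad N\approx 2L/h,
\]
since $\omega(x',t+Nh)=0$. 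By the triangle inequality and translation invariance, $\|\omega\|_{L^2}\le N\|\Delta_{T,h}\omega\|_{L^2}\le N h^{\alpha}M^{1/2}$. The low-frequency part is therefore also bounded by $cM$, and $\|\partial_t^\beta\omega\|_{L^2}\le c^{-1}|\omega|_{T,\alpha}$ follows. The constant then depends on $\alpha,\beta$, on $\varepsilon_0$, and on the vertical extent of the support of $\omega$, which is fixed in the applications. I expect this low-frequency control to be the only delicate point; the rest is a routine multiplier computation.
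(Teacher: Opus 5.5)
The paper gives no proof of this lemma; it is quoted from \cite{CL97}, so your argument must stand on its own, and it does. Your steps are the standard multiplier proof:
\begin{itemize}
\item you reduce to Plancherel in $t$ via $xe^{hT}=(x',t+h)$;
\item you bound the multiplier by $4\sin^2(h\xi/2)\le 4|h\xi|^{2\alpha}$ for the right inequality;
\item you average $\|\Delta_{T,h}\omega\|_{L^2}^2\le M|h|^{2\alpha}$ against $h^{-1-2\beta}\,dh$ on $(0,\varepsilon_0)$ to control the frequencies $|\xi|\ge 1/\varepsilon_0$.
\end{itemize}
The hypothesis $\beta<\alpha$ enters exactly where you say.

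Two small precisions. First, your telescoping identity holds only for $|t|<L$: for $t<-L$ the term $\omega(x',t+Nh)$ need not vanish. This is harmless, since $\|\omega\|_{L^2}$ only sees $\{|t|<L\}$. Second, $|\omega|_{T,\alpha}$ is defined by an integral over $\Omega$, so you need $\varepsilon_0$ small enough that the translates $(\operatorname{supp}\omega)e^{-hT}$, $|h|<\varepsilon_0$, stay inside $\Omega$. Only then does it coincide with the integral over $\mathbb{H}^n$ to which Plancherel applies.

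What your direct proof buys over the bare citation is honest bookkeeping of the constant. Its dependence on $\varepsilon_0$ and on the vertical extent $L$ is unavoidable. For $\omega(x',t)=\phi(x')\psi(t/\lambda)$ one gets $\|\partial_t^\beta\omega\|_{L^2}^2=\lambda^{1-2\beta}\|\phi\|_{L^2}^2\|\partial_t^\beta\psi\|_{L^2}^2$, while $|\omega|_{T,\alpha}^2\le\varepsilon_0^{2-2\alpha}\lambda^{-1}\|\phi\|_{L^2}^2\|\psi'\|_{L^2}^2$. Hence no $c=c(\alpha,\beta)$ works uniformly as $\lambda\to\infty$, and the support (equivalently $\Omega$) and $\varepsilon_0$ must be read into the statement.

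Your proof also never uses smoothness of $\omega$: any $\omega\in L^2$ with bounded vertical support will do. That is exactly the situation in \eqref{eq313}, where the lemma is applied to the nonsmooth function $\eta\nabla_H u$, and the dependence on the support of $\eta$ is absorbed into $c(Q,\eta,s)$.
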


\begin{lemma}[\cite{CL97}, Theorem 2.6]\label{Le25}
Let $\Omega \subset \mathbb{H}^n$ and $B(x_0, r) \subset \Omega$. For any $\eta \in C_0^\infty(B(x_0, r))$ and $\omega \in C^\infty(\Omega)$, there exists a constant $c > 0$ such that
\[
|\omega \eta|_{T,1/2} \le c \sum_{i = 1}^{2n} |\omega \eta|_{X_i,1}.
\]
\end{lemma}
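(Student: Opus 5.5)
The plan is to write the vertical translation $x\mapsto xe^{hT}$ as a composition of four horizontal translations of size $\sqrt{|h|}$, using the exact Baker–Campbell–Hausdorff formula \eqref{eq23}, and then to telescope. Set $v=\omega\eta$. It is smooth and compactly supported in $B(x_0,r)$, so all translates below are defined on all of $\mathbb{H}^n$ and every integral is over $\mathbb{H}^n$. Since $\mathbb{H}^n$ is step two, all brackets of order $\ge 3$ vanish, and \eqref{eq23} holds exactly. With $[X_1,X_{n+1}]=T$ and $k=\sqrt{h}$ for $h>0$, I expect
\[
e^{kX_1}e^{kX_{n+1}}e^{-kX_1}e^{-kX_{n+1}}=e^{k^2[X_1,X_{n+1}]}=e^{hT}.
\]
For $h<0$ I take $k=\sqrt{|h|}$ and interchange the roles of $X_1$ and $X_{n+1}$. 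I will fix the signs once by a direct check with the group law \eqref{eq21}; any sign error only reorders the factors.

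Next I telescope. Write $y_0=x$, $y_1=xe^{kX_1}$, $y_2=y_1e^{kX_{n+1}}$, $y_3=y_2e^{-kX_1}$ and $y_4=y_3e^{-kX_{n+1}}=xe^{hT}$. Then
\[
v(xe^{hT})-v(x)=\sum_{j=0}^{3}\bigl(v(y_{j+1})-v(y_j)\bigr),
\]
and each summand has the form $v(ye^{\pm kX})-v(y)$ with $X\in\{X_1,X_{n+1}\}$ and $y=xg_j$ for a fixed group element $g_j$. Lebesgue measure is the bi-invariant Haar measure of $\mathbb{H}^n$, so the change of variables $x\mapsto xg_j$ preserves $L^2$ norms. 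This gives
\[
\|v(\cdot e^{hT})-v\|_{L^2}\le 2\sup_{\pm}\|v(\cdot e^{\pm kX_1})-v\|_{L^2}+2\sup_{\pm}\|v(\cdot e^{\pm kX_{n+1}})-v\|_{L^2}.
\]
A step $-k$ in the $X$-direction is turned into a step $+k$ by a further right translation, which again preserves the norm.

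Then I square, multiply by $|h|^{-1}=k^{-2}$, and take the suprema over $|h|<\varepsilon<\varepsilon_0$. Since $|k|=\sqrt{|h|}<\sqrt{\varepsilon_0}$, each term $k^{-2}\|v(\cdot e^{\pm kX_i})-v\|_{L^2}^2$ is bounded by $|v|_{X_i,1}^2$, provided $\varepsilon_0$ is small (for instance $\varepsilon_0\le 1$, with the horizontal seminorm taken over its own admissible range). Hence
\[
|v|_{T,1/2}^2\le c\bigl(|v|_{X_1,1}^2+|v|_{X_{n+1},1}^2\bigr)\le c\Bigl(\sum_{i=1}^{2n}|v|_{X_i,1}\Bigr)^2,
\]
which is the claim with a purely numerical constant $c$.

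The only genuinely delicate point is the first step: the exact commutator identity with correct signs, and the matching of the ranges of $h$ and $k=\sqrt{|h|}$ in the suprema defining the two seminorms. The rest is the triangle inequality together with translation invariance of Haar measure.
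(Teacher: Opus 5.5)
Your proof is correct and follows the standard commutator--telescoping argument behind this result; the paper gives no proof of its own and only cites Capogna's Theorem 2.6. The identity $e^{kX_1}e^{kX_{n+1}}e^{-kX_1}e^{-kX_{n+1}}=e^{k^2T}$ checks out with the given group law, and right-invariance of Lebesgue measure does the rest.

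One small correction to your range-matching remark: taking $\varepsilon_0\le 1$ goes the wrong way, since $\sqrt{\varepsilon_0}>\varepsilon_0$ and so $k=\sqrt{|h|}$ can leave the horizontal range. This is harmless, because writing $e^{kX}=\bigl(e^{(k/m)X}\bigr)^m$ and telescoping gives $|k|^{-1}\|v(\cdot\, e^{kX})-v\|_{L^2}\le |v|_{X,1}$ for every $k$, whatever the range.
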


By choosing a cutoff function $\eta$ such that $\eta = 1$ on $B(x_0, r/2)$ and combining Lemma \ref{Le24} with Lemma \ref{Le25}, we obtain for any $0 < \tau < 1/2$,
\begin{equation}\label{eq211}
\|\partial_t^{1/2 - \tau} \omega\|_{L^2(B(x_0, r/2))} \le c |\omega|_{T,1/2} \le c |\omega \eta|_{T,1/2} \le c \sum_{i = 1}^{2n} |\omega \eta|_{X_i,1} \le c \|\nabla_H(\omega \eta)\|_{L^2(\Omega)}.
\end{equation}
Specifically, for $u \in C^\infty(\Omega)$ and $\phi \in C_0^\infty(B(x_0, r))$, setting $\omega = \partial_t^{1/2 - \tau}(\phi u)$ in \eqref{eq211} yields the following higher estimate:
\begin{equation}\label{eq212}
\|\partial_t^{1 - 2\tau}(\phi u)\|_{L^2(B(x_0, r/2))} \le c \|\nabla_H [(\partial_t^{1/2 - \tau}(\phi u))\eta]\|_{L^2(\Omega)}.
\end{equation}

\section{Proof of Theorem \ref{Th11}}

Let $\eta \in C_0^\infty(\mathbb{H}^n)$ be a standard cutoff function supported in $B(x_0, r)$ such that $\eta \equiv 1$ on $B(x_0, r/2)$, $0 \le \eta \le 1$, and
\begin{equation*}
|\nabla_H \eta| \le c, \quad |T\eta| \le c.
\end{equation*}
By choosing the test function in \eqref{eq15} as
\[\varphi  = {D_{T, - h,\frac{1}{2} }}\left( {{\eta ^2}{D_{T,h,\frac{1}{2} }}u\left( x \right)} \right),\]
and applying the integration-by-parts formula for difference quotients \eqref{eq116}, we obtain
\begin{align}\label{eq31}
&\int_\Omega \langle D_{T, h, 1/2} \nabla_H u, \nabla_H (\eta^2 D_{T, h, 1/2} u) \rangle \, dx \nonumber \\
& \quad + \frac{c_{Q,s}}{2} \iint_{\mathbb{H}^n \times \mathbb{H}^n} \frac{\left( D_{T,h,1/2} u(x) - D_{T,h,1/2} u(y)\right) \cdot \left( \left( {{\eta ^2}{D_{T,h,\frac{1}{2}}}u} \right)\left( x \right) - \left( {{\eta ^2}{D_{T,h,\frac{1}{2}}}u} \right)\left( y \right)\right)}{\|y^{-1} \circ x\|_{\mathbb{H}^n}^{Q+2s}} \, dx \, dy \nonumber \\
&= - \int_\Omega f D_{T, -h, 1/2} (\eta^2 D_{T, h, 1/2} u) \, dx.
\end{align}
Expanding the terms using the discrete Leibniz rule, we have
\[ \nabla_H (\eta^2 D_{T, h, 1/2} u) = \eta^2 D_{T, h, 1/2} \nabla_H u + 2\eta \nabla_H \eta D_{T, h, 1/2} u, \]
and for the kernel part:
\begin{align*}
   & \left( {{\eta ^2}{D_{T,h,\frac{1}{2}}}u} \right)\left( x \right) - \left( {{\eta ^2}{D_{T,h,\frac{1}{2}}}u} \right)\left( y \right) = {\eta ^2}\left( x \right){D_{T,h,\frac{1}{2}}}u\left( x \right) - {\eta ^2}\left( y \right){D_{T,h,\frac{1}{2}}}u\left( y \right) \\
   =& {\eta ^2}\left( x \right)\left( {{D_{T,h,\frac{1}{2}}}u\left( x \right) - {D_{T,h,\frac{1}{2}}}u\left( y \right)} \right) + \left( {{\eta ^2}\left( x \right) - {\eta ^2}\left( y \right)} \right){D_{T,h,\frac{1}{2}}}u\left( y \right)\\
   =& {\eta ^2}\left( x \right)\left( {{D_{T,h,\frac{1}{2}}}u\left( x \right) - {D_{T,h,\frac{1}{2}}}u\left( y \right)} \right) + \eta \left( x \right)\left( {\eta \left( x \right) - \eta \left( y \right)} \right){D_{T,h,\frac{1}{2}}}u\left( y \right)\\
   & +\eta \left( y \right)\left( {\eta \left( x \right) - \eta \left( y \right)} \right){D_{T,h,\frac{1}{2}}}u\left( y \right).
\end{align*}
Substituting these into \eqref{eq31}, we derive
\begin{align}\label{eq32}
  &\int_\Omega  {{\eta ^2}{{\left| {{D_{T,h,\frac{1}{2}}}{\nabla _H}u} \right|}^2}dx}  + \frac{{{c_{Q,s}}}}{2}\iint_{{\mathbb{H}^n} \times {\mathbb{H}^n}} {\frac{{{\eta ^2}\left( x \right){{\left| {{D_{T,h,\frac{1}{2}}}u\left( x \right) - {D_{T,h,\frac{1}{2}}}u\left( y \right)} \right|}^2}}}{{{{\left\| {{y^{ - 1}} \circ x} \right\|}_{\mathbb{H}^n}^{Q + 2s}}}}dxdy}  \nonumber \\
   =&  - 2\int_\Omega  {\eta {\nabla _H}\eta {D_{T,h,\frac{1}{2}}}{\nabla _H}u{D_{T,h,\frac{1}{2}}}udx}  \nonumber\\
   & - \frac{{{c_{Q,s}}}}{2}\iint_{\mathbb{H}^n \times \mathbb{H}^n} {\frac{{\eta \left( x \right)\left( {\eta \left( x \right) - \eta \left( y \right)} \right)\left( {{D_{T,h,\frac{1}{2}}}u\left( x \right) - {D_{T,h,\frac{1}{2}}}u\left( y \right)} \right){D_{T,h,\frac{1}{2}}}u\left( y \right)}}{{{{\left\| {{y^{ - 1}} \circ x} \right\|}_{\mathbb{H}^n}^{Q + 2s}}}}dxdy}\nonumber\\
   & - \frac{{{c_{Q,s}}}}{2}\iint_{\mathbb{H}^n \times \mathbb{H}^n} {\frac{{\eta \left( y \right)\left( {\eta \left( x \right) - \eta \left( y \right)} \right)\left( {{D_{T,h,\frac{1}{2}}}u\left( x \right) - {D_{T,h,\frac{1}{2}}}u\left( y \right)} \right){D_{T,h,\frac{1}{2}}}u\left( y \right)}}{{{{\left\| {{y^{ - 1}} \circ x} \right\|}_{\mathbb{H}^n}^{Q + 2s}}}}dxdy} \nonumber\\
   & - \int_\Omega  {f{D_{T, - h,\frac{1}{2}}}\left( {{\eta ^2}{D_{T,h,\frac{1}{2}}}u} \right)dx}\nonumber\\
    = :&{I_1} + {I_2} + {I_3} + {I_4}.
\end{align}

We now estimate each term $I_i$ ($i=1,\dots,4$) using Young's inequality with $\varepsilon > 0$. For $I_1$, we have
\begin{align}\label{eq33}
   {I_1} & \le 2\int_\Omega  {\eta \left| {{\nabla _H}\eta } \right|\left| {{D_{T,h,\frac{1}{2}}}{\nabla _H}u} \right|\left| {{D_{T,h,\frac{1}{2}}}u} \right|dx}  \nonumber\\
  &  \le \frac{1}{2}\int_\Omega  {{\eta ^2}{{\left| {{D_{T,h,\frac{1}{2}}}{\nabla _H}u} \right|}^2}dx}  + 2\int_\Omega  {{{\left| {{\nabla _H}\eta } \right|}^2}{{\left| {{D_{T,h,\frac{1}{2}}}u} \right|}^2}dx}  .
\end{align}
For the integral terms $I_2$ and $I_3$, a similar application of Young's inequality yields
\begin{align}\label{eq34}
  {I_2} &\le\frac{{{c_{Q,s}}}}{2}\iint_{\mathbb{H}^n \times \mathbb{H}^n} {\frac{{\eta \left( x \right)\left| {\eta \left( x \right) - \eta \left( y \right)} \right|\left| {{D_{T,h,\frac{1}{2}}}u\left( x \right) - {D_{T,h,\frac{1}{2}}}u\left( y \right)} \right|\left| {{D_{T,h,\frac{1}{2}}}u\left( y \right)} \right|}}{{{{\left\| {{y^{ - 1}} \circ x} \right\|}_{\mathbb{H}^n}^{Q + 2s}}}}dxdy}\nonumber\\
    &\le \frac{1}{4}\frac{{{c_{Q,s}}}}{2}\iint_{\mathbb{H}^n \times \mathbb{H}^n} {\frac{{{\eta ^2}\left( x \right){{\left| {{D_{T,h,\frac{1}{2}}}u\left( x \right) - {D_{T,h,\frac{1}{2}}}u\left( y \right)} \right|}^2}}}{{{{\left\| {{y^{ - 1}} \circ x} \right\|}_{\mathbb{H}^n}^{Q + 2s}}}}dxdy} \nonumber\\
     &+ \frac{{{c_{Q,s}}}}{2}\iint_{\mathbb{H}^n \times \mathbb{H}^n} {\frac{{{{\left| {\eta \left( x \right) - \eta \left( y \right)} \right|}^2}{{\left| {{D_{T,h,\frac{1}{2}}}u\left( y \right)} \right|}^2}}}{{{{\left\| {{y^{ - 1}} \circ x} \right\|}_{\mathbb{H}^n}^{Q + 2s}}}}dxdy};
\end{align}
\begin{align}\label{eq35}
  {I_3}     &\le \frac{1}{4}\frac{{{c_{Q,s}}}}{2}\iint_{\mathbb{H}^n \times \mathbb{H}^n} {\frac{{{\eta ^2}\left( y \right){{\left| {{D_{T,h,\frac{1}{2}}}u\left( x \right) - {D_{T,h,\frac{1}{2}}}u\left( y \right)} \right|}^2}}}{{{{\left\| {{y^{ - 1}} \circ x} \right\|}_{\mathbb{H}^n}^{Q + 2s}}}}dxdy} \nonumber\\
     &+ \frac{{{c_{Q,s}}}}{2}\iint_{\mathbb{H}^n \times \mathbb{H}^n} {\frac{{{{\left| {\eta \left( x \right) - \eta \left( y \right)} \right|}^2}{{\left| {{D_{T,h,\frac{1}{2}}}u\left( y \right)} \right|}^2}}}{{{{\left\| {{y^{ - 1}} \circ x} \right\|}_{\mathbb{H}^n}^{Q + 2s}}}}dxdy}.
\end{align}
The term $I_4$ is bounded by
\begin{equation}\label{eq36}
  {I_4}   \le\int_\Omega  {\left| {f{D_{T, - h,\frac{1}{2}}}\left( {{\eta ^2}{D_{T,h,\frac{1}{2}}}u} \right)} \right|dx}  \le\frac{1}{2}c(\varepsilon)\int_\Omega  {{{\left| f \right|}^2}dx}  +\frac{1}{2}\varepsilon \int_\Omega  {{{\left| {{D_{T, - h,\frac{1}{2}}}\left( {{\eta ^2}{D_{T,h,\frac{1}{2}}}u} \right)} \right|}^2}dx} .
\end{equation}
Since
$$\iint_{\mathbb{H}^n \times \mathbb{H}^n} {\frac{{{\eta ^2}\left( x \right){{\left| {{D_{T,h,\frac{1}{2}}}u\left( x \right) - {D_{T,h,\frac{1}{2}}}u\left( y \right)} \right|}^2}}}{{{{\left\| {{y^{ - 1}} \circ x} \right\|}_{\mathbb{H}^n}^{Q + 2s}}}}dxdy}=\iint_{\mathbb{H}^n \times \mathbb{H}^n} {\frac{{{\eta ^2}\left( y \right){{\left| {{D_{T,h,\frac{1}{2}}}u\left( x \right) - {D_{T,h,\frac{1}{2}}}u\left( y \right)} \right|}^2}}}{{{{\left\| {{y^{ - 1}} \circ x} \right\|}_{\mathbb{H}^n}^{Q + 2s}}}}dxdy},$$
we have by gathering together estimates \eqref{eq32}-\eqref{eq36} that 
\begin{align*}
   & \int_\Omega  {{\eta ^2}{{\left| {{D_{T,h,\frac{1}{2}}}{\nabla _H}u} \right|}^2}dx}  + \frac{{{c_{Q,s}}}}{2}\iint_{\mathbb{H}^n \times \mathbb{H}^n} {\frac{{{\eta ^2}\left( x \right){{\left| {{D_{T,h,\frac{1}{2}}}u\left( x \right) - {D_{T,h,\frac{1}{2}}}u\left( y \right)} \right|}^2}}}{{{{\left\| {{y^{ - 1}} \circ x} \right\|}_{\mathbb{H}^n}^{Q + 2s}}}}dxdy}  \nonumber\\
   \le& \frac{1}{2}\int_\Omega  {{\eta ^2}{{\left| {{D_{T,h,\frac{1}{2}}}{\nabla _H}u} \right|}^2}dx} +2\int_\Omega  {{{\left| {{\nabla _H}\eta } \right|}^2}{{\left| {{D_{T,h,\frac{1}{2}}}u} \right|}^2}dx}\\
  & +\frac{1}{2}\frac{{{c_{Q,s}}}}{2}\iint_{\mathbb{H}^n \times \mathbb{H}^n} {\frac{{{\eta ^2}\left( x \right){{\left| {{D_{T,h,\frac{1}{2}}}u\left( x \right) - {D_{T,h,\frac{1}{2}}}u\left( y \right)} \right|}^2}}}{{{{\left\| {{y^{ - 1}} \circ x} \right\|}_{\mathbb{H}^n}^{Q + 2s}}}}dxdy}\\
   &+{{c_{Q,s}}}\iint_{\mathbb{H}^n \times \mathbb{H}^n} {\frac{{{{\left| {\eta \left( x \right) - \eta \left( y \right)} \right|}^2}{{\left| {{D_{T,h,\frac{1}{2}}}u\left( y \right)} \right|}^2}}}{{{{\left\| {{y^{ - 1}} \circ x} \right\|}_{\mathbb{H}^n}^{Q + 2s}}}}dxdy}\nonumber\\
    &+\frac{1}{2}c(\varepsilon)\int_\Omega  {{{\left| f \right|}^2}dx} + \frac{1}{2}\varepsilon\int_\Omega  {{{\left| {{D_{T, - h,\frac{1}{2}}}\left( {{\eta ^2}{D_{T,h,\frac{1}{2}}}u} \right)} \right|}^2}dx}\nonumber\\
    =&\frac{1}{2}\left( \int_\Omega  {{\eta ^2}{{\left| {{D_{T,h,\frac{1}{2}}}{\nabla _H}u} \right|}^2}dx}  + \frac{{{c_{Q,s}}}}{2}\iint_{\mathbb{H}^n \times \mathbb{H}^n} {\frac{{{\eta ^2}\left( x \right){{\left| {{D_{T,h,\frac{1}{2}}}u\left( x \right) - {D_{T,h,\frac{1}{2}}}u\left( y \right)} \right|}^2}}}{{{{\left\| {{y^{ - 1}} \circ x} \right\|}_{\mathbb{H}^n}^{Q + 2s}}}}dxdy} \right)\nonumber\\
    &+2\int_\Omega  {{{\left| {{\nabla _H}\eta } \right|}^2}{{\left| {{D_{T,h,\frac{1}{2}}}u} \right|}^2}dx}+{{c_{Q,s}}}\iint_{\mathbb{H}^n \times \mathbb{H}^n} {\frac{{{{\left| {\eta \left( x \right) - \eta \left( y \right)} \right|}^2}{{\left| {{D_{T,h,\frac{1}{2}}}u\left( y \right)} \right|}^2}}}{{{{\left\| {{y^{ - 1}} \circ x} \right\|}_{\mathbb{H}^n}^{Q + 2s}}}}dxdy}\nonumber\\
    &+\frac{1}{2}c(\varepsilon)\int_\Omega  {{{\left| f \right|}^2}dx} + \frac{1}{2}\varepsilon\int_\Omega  {{{\left| {{D_{T, - h,\frac{1}{2}}}\left( {{\eta ^2}{D_{T,h,\frac{1}{2}}}u} \right)} \right|}^2}dx}.
\end{align*}
That is
\begin{align}\label{eq37}
   & \int_\Omega  {{\eta ^2}{{\left| {{D_{T,h,\frac{1}{2}}}{\nabla _H}u} \right|}^2}dx}  + \frac{{{c_{Q,s}}}}{2}\iint_{\mathbb{H}^n \times \mathbb{H}^n} {\frac{{{\eta ^2}\left( x \right){{\left| {{D_{T,h,\frac{1}{2}}}u\left( x \right) - {D_{T,h,\frac{1}{2}}}u\left( y \right)} \right|}^2}}}{{{{\left\| {{y^{ - 1}} \circ x} \right\|}_{\mathbb{H}^n}^{Q + 2s}}}}dxdy}  \nonumber\\
   \le& 4\int_\Omega  {{{\left| {{\nabla _H}\eta } \right|}^2}{{\left| {{D_{T,h,\frac{1}{2}}}u} \right|}^2}dx} + 2{{c_{Q,s}}}\iint_{\mathbb{H}^n \times \mathbb{H}^n} {\frac{{{{\left| {\eta \left( x \right) - \eta \left( y \right)} \right|}^2}{{\left| {{D_{T,h,\frac{1}{2}}}u\left( y \right)} \right|}^2}}}{{{{\left\| {{y^{ - 1}} \circ x} \right\|}_{\mathbb{H}^n}^{Q + 2s}}}}dxdy}\nonumber\\
    &+c(\varepsilon)\int_\Omega  {{{\left| f \right|}^2}dx} + \varepsilon\int_\Omega  {{{\left| {{D_{T, - h,\frac{1}{2}}}\left( {{\eta ^2}{D_{T,h,\frac{1}{2}}}u} \right)} \right|}^2}dx}.
\end{align}

On the one hand, since $\eta  \in C_0^\infty \left( {{\mathbb{H}^n}} \right) $, it holds that for every $y\in {\mathbb{H}^n}$,
\begin{align*}
   \int_{\mathbb{H}^n } {\frac{{{{\left| {\eta \left( x \right) - \eta \left( y \right)} \right|}^2}}}{{{{\left\| {{y^{ - 1}} \circ x} \right\|}_{\mathbb{H}^n}^{Q + 2s}}}}dx}
     \le&\mathop {\sup }\limits_{{\mathbb{H}^n}} {\left| {{\nabla _H}\eta } \right|^2}\int_{\left\{ {{{\left\| {{y^{ - 1}} \circ x} \right\|}_{\mathbb{H}^n}} \le1 } \right\}}
     {\frac{{dx}}{{{{\left\| {{y^{ - 1}} \circ x} \right\|}_{\mathbb{H}^n}^{Q + 2(s-1)} }}}}\nonumber \\
   & +4\mathop {\sup }\limits_{{\mathbb{H}^n}} {\left| {\eta } \right|^2}\int_{\left\{ {{{\left\| {{y^{ - 1}} \circ x} \right\|}_{\mathbb{H}^n}} >1 } \right\}} {\frac{{dx}}{{{{\left\| {{y^{ - 1}} \circ x} \right\|}_{\mathbb{H}^n}^{Q + 2s}}}}} \nonumber \\
=&:c(Q,\eta,s).
\end{align*}
Then
\begin{equation}\label{eq38}
\iint_{\mathbb{H}^n \times \mathbb{H}^n} {\frac{{{{\left| {\eta \left( x \right) - \eta \left( y \right)} \right|}^2}{{\left| {{D_{T,h,\frac{1}{2}}}u\left( y \right)} \right|}^2}}}{{{{\left\| {{y^{ - 1}} \circ x} \right\|}_{\mathbb{H}^n}^{Q + 2s}}}}dxdy}\le c(Q,\eta,s)\int_{{\mathbb{H}^n}} {{{{{\left| {{D_{T,h,\frac{1}{2}}}u\left( y \right)} \right|}^2}}}dy}.
\end{equation}
On the other hand, we obtain from Lemma \ref{Le23} and the properties of $\eta$ that
\begin{align}\label{eq39}
   \int_\Omega  {{{\left| {{D_{T, - h,\frac{1}{2}}}\left( {{\eta ^2}{D_{T,h,\frac{1}{2}}}u} \right)} \right|}^2}dx}  \le& c\int_\Omega  {\left( {{{\left| {{\eta ^2}{D_{T,h,\frac{1}{2}}}u} \right|}^2} + {{\left| {{\nabla _H}\left( {{\eta ^2}{D_{T,h,\frac{1}{2}}}u} \right)} \right|}^2}} \right)dx}  \nonumber \\
   \le& c\int_\Omega  {\left( {{{\left| {{\eta ^2}{D_{T,h,\frac{1}{2}}}u} \right|}^2} + 4{{\left| {\eta {\nabla _H}\eta {D_{T,h,\frac{1}{2}}}u} \right|}^2} + 2{{\left| {{\eta ^2}\left( {{D_{T,h,\frac{1}{2}}}{\nabla _H}u} \right)} \right|}^2}} \right)dx}  \nonumber\\
  \le&  c\int_{{\mathbb{H}^n}} {{{\left| {{D_{T,h,\frac{1}{2}}}u} \right|}^2}dx}  + c\int_\Omega  {{\eta ^2}{{\left| {{D_{T,h,\frac{1}{2}}}{\nabla _H}u} \right|}^2}dx} .
\end{align}
 Moreover, because of
$$\frac{{{c_{Q,s}}}}{2}\iint_{\mathbb{H}^n \times \mathbb{H}^n} {\frac{{{\eta ^2}\left( x \right){{\left| {{D_{T,h,\frac{1}{2}}}u\left( x \right) - {D_{T,h,\frac{1}{2}}}u\left( y \right)} \right|}^2}}}{{{{\left\| {{y^{ - 1}} \circ x} \right\|}_{\mathbb{H}^n}^{Q + 2s}}}}dxdy}>0,$$
we deduce by combining \eqref{eq37}-\eqref{eq39} and using Lemma \ref{Le23} and the properties of $\eta$ that
\begin{align*}
   & \int_\Omega  {{\eta ^2}{{\left| {{D_{T,h,\frac{1}{2}}}{\nabla _H}u} \right|}^2}dx}  \nonumber\\
  \le& \int_\Omega  {{\eta ^2}{{\left| {{D_{T,h,\frac{1}{2}}}{\nabla _H}u} \right|}^2}dx}  + \frac{{{c_{Q,s}}}}{2}\iint_{\mathbb{H}^n \times \mathbb{H}^n} {\frac{{{\eta ^2}\left( x \right){{\left| {{D_{T,h,\frac{1}{2}}}u\left( x \right) - {D_{T,h,\frac{1}{2}}}u\left( y \right)} \right|}^2}}}{{{{\left\| {{y^{ - 1}} \circ x} \right\|}_{\mathbb{H}^n}^{Q + 2s}}}}dxdy}  \nonumber\\
  \le& 4\int_\Omega  {{{\left| {{\nabla _H}\eta } \right|}^2}{{\left| {{D_{T,h,\frac{1}{2}}}u} \right|}^2}dx} + 2{{c_{Q,s}}}\iint_{\mathbb{H}^n \times \mathbb{H}^n} {\frac{{{{\left| {\eta \left( x \right) - \eta \left( y \right)} \right|}^2}{{\left| {{D_{T,h,\frac{1}{2}}}u\left( y \right)} \right|}^2}}}{{{{\left\| {{y^{ - 1}} \circ x} \right\|}_{\mathbb{H}^n}^{Q + 2s}}}}dxdy}\nonumber\\
    &+c(\varepsilon)\int_\Omega  {{{\left| f \right|}^2}dx} + \varepsilon\int_\Omega  {{{\left| {{D_{T, - h,\frac{1}{2}}}\left( {{\eta ^2}{D_{T,h,\frac{1}{2}}}u} \right)} \right|}^2}dx}\nonumber\\
   \le& 4\int_\Omega  {{{\left| {{\nabla _H}\eta } \right|}^2}{{\left| {{D_{T,h,\frac{1}{2}}}u} \right|}^2}dx} + {{c{(Q,\eta,s,\varepsilon)}}}\int_{{\mathbb{H}^n}} {{{{{\left| {{D_{T,h,\frac{1}{2}}}u} \right|}^2}}}dx}+c(\varepsilon)\int_\Omega  {{{\left| f \right|}^2}dx}\nonumber\\
    &+c\varepsilon\int_\Omega  {{\eta ^2}{{\left| {{D_{T,h,\frac{1}{2}}}{\nabla _H}u} \right|}^2}dx} \nonumber\\
   \le& {{c{(Q,\eta,s,\varepsilon)}}}\int_{{\mathbb{H}^n}} {{{{{\left| {{D_{T,h,\frac{1}{2}}}u} \right|}^2}}}dx}+c(\varepsilon)\int_\Omega  {{{\left| f \right|}^2}dx}+c\varepsilon\int_\Omega  {{\eta ^2}{{\left| {{D_{T,h,\frac{1}{2}}}{\nabla _H}u} \right|}^2}dx}\nonumber\\
    \le & c(Q,\eta ,s,\varepsilon)\left\| u \right\|_{H{W^{1,2}}\left( {{\mathbb{H}^n}} \right)}^2 + c\int_\Omega  {{{\left| f \right|}^2}dx}+ c\varepsilon\int_\Omega  {{\eta ^2}{{\left| {{D_{T,h,\frac{1}{2}}}{\nabla _H}u} \right|}^2}dx}.
\end{align*}
By selecting $\varepsilon$ sufficiently small, we conclude
\begin{equation}\label{eq310}
    \int_\Omega  {{\eta ^2}{{\left| {{D_{T,h,\frac{1}{2}}}{\nabla _H}u} \right|}^2}dx}
   \le  c(Q,\eta ,s)\left\| u \right\|_{H{W^{1,2}}\left( {{\mathbb{H}^n}} \right)}^2 + c\int_\Omega  {{{\left| f \right|}^2}dx} .
\end{equation}
Then it follows by using the properties of $\eta$ and \eqref{eq310} that
\begin{align}\label{eq311}
   \int_\Omega  {{{\left| {{D_{T,h,\frac{1}{2}}}\left( {\eta {\nabla _H}u} \right)} \right|}^2}dx} & \le c\int_\Omega  {{\eta ^2}{{\left| {{D_{T,h,\frac{1}{2}}}{\nabla _H}u} \right|}^2}dx}  + c\int_\Omega  {{{\left| {{D_{T,h,\frac{1}{2}}}\eta } \right|}^2}{{\left| {{\nabla _H}u\left( {x{e^{hT}}} \right)} \right|}^2}dx} \nonumber \\
   &  \le c\int_\Omega  {{\eta ^2}{{\left| {{D_{T,h,\frac{1}{2}}}{\nabla _H}u} \right|}^2}dx}  + c\left( \eta  \right)\int_{{\mathbb{H}^n}} {{{\left| {{\nabla _H}u} \right|}^2}dx} \nonumber\\
    &  \le c(Q,\eta ,s)\left\| u \right\|_{H{W^{1,2}}\left( {{\mathbb{H}^n}} \right)}^2 + c\int_\Omega  {{{\left| f \right|}^2}dx}.
\end{align}
Hence
\begin{equation}\label{eq312}
 \left| {\eta {\nabla _H}u} \right|_{T,{\raise0.5ex\hbox{$\scriptstyle 1$}
\kern-0.1em/\kern-0.15em
\lower0.25ex\hbox{$\scriptstyle 2$}}}^2 \le c(Q,\eta ,s)\left\| u \right\|_{H{W^{1,2}}\left( {{\mathbb{H}^n}} \right)}^2 + c\int_\Omega  {{{\left| f \right|}^2}dx}.
\end{equation}
Then it follows by using Lemma \ref{Le24} and \eqref{eq312} that
\begin{equation}\label{eq313}
  \left\| {\partial _t^{\frac{1}{2} - \epsilon }\left( {\eta {\nabla _H}u} \right)} \right\|_{{L^2}\left( {{\mathbb{H}^n}} \right)}^2 \le c\left| {\left( {\eta {\nabla _H}u} \right)} \right|_{T,\frac{1}{2}}^2 \le  c(Q,\eta ,s)\left\| u \right\|_{H{W^{1,2}}\left( {{\mathbb{H}^n}} \right)}^2 + c\int_\Omega  {{{\left| f \right|}^2}dx} ,
\end{equation}
where $0<\epsilon<\frac{1}{2}$. By \eqref{eq313} and \eqref{eq211}, we have
\begin{align}\label{eq314}
  \left\| {{\nabla _H}\left[ {\partial _t^{\frac{1}{2} - \epsilon }\left( {\eta u} \right)} \right]} \right\|_{{L^2}\left( {{\mathbb{H}^n}} \right)}^2 & = \left\| {\partial _t^{\frac{1}{2} - \epsilon }\left( {{\nabla _H}\left( {\eta u} \right)} \right)} \right\|_{{L^2}\left( {{\mathbb{H}^n}} \right)}^2 \nonumber \\
 & \le \left\| {\partial _t^{\frac{1}{2} - \epsilon }\left( {\eta {\nabla _H}u} \right)} \right\|_{{L^2}\left( {{\mathbb{H}^n}} \right)}^2 + \left\| {\partial _t^{\frac{1}{2} - \epsilon }\left( {u{\nabla _H}\eta } \right)} \right\|_{{L^2}\left( {{\mathbb{H}^n}} \right)}^2 \nonumber \\
  & \le c(Q,\eta ,s)\left\| u \right\|_{H{W^{1,2}}\left( {{\mathbb{H}^n}} \right)}^2+ c\int_\Omega  {{{\left| f \right|}^2}dx}+ c\left\| {{\nabla _H}\left( {u{\nabla _H}\eta } \right)} \right\|_{{L^2}\left( {{\mathbb{H}^n}} \right)}^2 \nonumber \\
  & \le c(Q,\eta ,s)\left\| u \right\|_{H{W^{1,2}}\left( {{\mathbb{H}^n}} \right)}^2+ c\int_\Omega  {{{\left| f \right|}^2}dx},
\end{align}
so we use \eqref{eq212} and \eqref{eq314} to deduce
\begin{align}\label{eq315}
  \left\| {\partial _t^{1 - 2\epsilon }\left( {\eta u} \right)} \right\|_{{L^2}\left( {B\left( {{x_0},r} \right)} \right)}^2 & \le c\left\| {{\nabla _H}\left[ {\partial _t^{\frac{1}{2} - \epsilon }\left( {\eta u} \right)} \right]} \right\|_{{L^2}\left( {{\mathbb{H}^n}} \right)}^2  \nonumber \\
  &\le c(Q,\eta ,s)\left\| u \right\|_{H{W^{1,2}}\left( {{\mathbb{H}^n}} \right)}^2 + c\int_\Omega  {{{\left| f \right|}^2}dx}.
\end{align}
By choosing $\epsilon  = {\epsilon _0} = \frac{1}{6}$ and $\epsilon  =\frac{1}{2} {\epsilon _0} = \frac{1}{12}$ in \eqref{eq315}, respectively, we establish the upper bounds for
$\left\| {\partial _t^{\frac{2}{3} }\left( {\eta u} \right)} \right\|_{{L^2}\left( {B\left( {{x_0},r} \right)} \right)}^2$ and $\left\| {\partial _t^{\frac{5}{6} }\left( {\eta u} \right)} \right\|_{{L^2}\left( {B\left( {{x_0},r} \right)} \right)}^2$.

Now we repeat the argument in \eqref{eq31}-\eqref{eq312} by taking $\varphi  = {D_{T, - h,\frac{5}{6} }}\left( {{\eta ^2}{D_{T,h,\frac{5}{6} }}u\left( x \right)} \right)$ in \eqref{eq15}, where $\eta  \in C_0^\infty \left( {B\left( {{x_0},r} \right)} \right)$ is a cut-off function between $B\left( {{x_0},\frac{r}{2}} \right)$ and $B\left( {{x_0},r} \right)$, and obtain
\begin{equation}\label{eq316}
  \left| {\left( {\eta {\nabla _H}u} \right)} \right|_{T,\frac{5}{6}}^2  \le c(Q,\eta ,s)\left\| u \right\|_{H{W^{1,2}}\left( {{\mathbb{H}^n}} \right)}^2 + c\int_\Omega  {{{\left| f \right|}^2}dx}.
\end{equation}
By Lemma \ref{Le24}, it shows
\begin{equation}\label{eq317}
\left\| {\partial _t^{\frac{5}{6} - {\epsilon _0}}\left( {\eta {\nabla _H}u} \right)} \right\|_{{L^2}\left( {{\mathbb{H}^n}} \right)}^2 \le  \left| {\left( {\eta {\nabla _H}u} \right)} \right|_{T,\frac{5}{6}}^2 \le c(Q,\eta ,s)\left\| u \right\|_{H{W^{1,2}}\left( {{\mathbb{H}^n}} \right)}^2 + c\int_\Omega  {{{\left| f \right|}^2}dx},
\end{equation}
where $0<\epsilon_0 <\frac{1}{2}$, so
\begin{equation}\label{eq318}
  \left\| {{\nabla _H}\left[ {\partial _t^{\frac{5}{6} - {\epsilon _0}}\left( {\eta u} \right)} \right]} \right\|_{{L^2}\left( {{\mathbb{H}^n}} \right)}^2 \le c(Q,\eta ,s)\left\| u \right\|_{H{W^{1,2}}\left( {{\mathbb{H}^n}} \right)}^2 + c\int_\Omega  {{{\left| f \right|}^2}dx} .
\end{equation}
It implies
\begin{align}\label{eq319}
 \left\| {{\partial _t}u} \right\|_{{L^2}\left( {B\left( {{x_0},\frac{r}{2}} \right)} \right)}^2 &\le \left\| {\partial _t^{\frac{1}{2} - {\epsilon _0}}\partial _t^{\frac{5}{6} - {\epsilon _0}}\left( {\eta u} \right)} \right\|_{{L^2}\left( {{\mathbb{H}^n}} \right)}^2 \le c\left\| {{\nabla _H}\left[ {\partial _t^{\frac{5}{6} - {\epsilon _0}}\left( {\eta u} \right)} \right]} \right\|_{{L^2}\left( {{\mathbb{H}^n}} \right)}^2\nonumber\\
 &\le c(Q,\eta ,s)\left\| u \right\|_{H{W^{1,2}}\left( {{\mathbb{H}^n}} \right)}^2 + c\int_\Omega  {{{\left| f \right|}^2}dx}.
\end{align}
Hence
\[Tu \in L_{loc}^2\left( \Omega  \right).\]

Finally, we choose $\varphi  = {D_{T, - h,1 }}\left( {{\eta ^2}{D_{T,h,1 }}u\left( x \right)} \right)$ in \eqref{eq15}, where $\eta  \in C_0^\infty \left( {B\left( {{x_0},\frac{r}{2}} \right)} \right)$ is a cut-off function between $B\left( {{x_0},\frac{r}{4}} \right)$ and $B\left( {{x_0},\frac{r}{2}} \right)$, and have
\[\left| {\left( {\eta {\nabla _H}u} \right)} \right|_{T,1}^2 \le c(Q,\eta ,s)\left\| u \right\|_{H{W^{1,2}}\left( {{\mathbb{H}^n}} \right)}^2 + c\int_\Omega  {{{\left| f \right|}^2}dx}.\]
That is
\[Tu \in HW_{\mathrm{loc}}^{1,2}\left( \Omega  \right).\]

\section{Proof of Theorem \ref{Th12}}

Let
${i_0} \in \left\{ {1, \cdots ,n} \right\},\;0<h <r$. By taking the test function in \eqref{eq15} as
$$\varphi  = {D_{{X_{{i_0}}}, - h,1}}{D_{{X_{{i_0}}},h,1}}\left( {{\eta ^4}u} \right)\left( x \right),$$
where $\eta \in C_0^\infty \left( {\mathbb{H}^n}  \right) $ is a cut-off function between $B\left( {{x_0},\frac{r}{4}} \right)$ and $B\left( {{x_0},\frac{r}{2}} \right)$ with $\left| {{\nabla _H}\eta } \right| \le c$ and $\left| {T\eta } \right| \le c$, we obtain
\begin{align}\label{eq41}
   & \int_\Omega  {\langle {\nabla _H}u,{\nabla _H}\left( {{D_{{X_{{i_0}}}, - h,1}}{D_{{X_{{i_0}}},h,1}}\left( {{\eta ^4}u} \right)} \right)\rangle dx} \nonumber \\
& + \frac{{{c_{Q,s}}}}{2}\iint_{\mathbb{H}^n \times \mathbb{H}^n} {\frac{{\left( {u\left( x \right) - u\left( y \right)} \right)\left( {{D_{{X_{{i_0}}}, - h,1}}{D_{{X_{{i_0}}},h,1}}\left( {{\eta ^4}u} \right)\left( x \right) - {D_{{X_{{i_0}}}, - h,1}}{D_{{X_{{i_0}}},h,1}}\left( {{\eta ^4}u} \right)\left( y \right)} \right)}}{{\left\| {{y^{ - 1}}\circ x} \right\|_{{\mathbb{H}^n}}^{Q + 2s}}}dxdy} \nonumber \\
    =& \int_\Omega  {f{D_{{X_{{i_0}}}, - h,1}}{D_{{X_{{i_0}}},h,1}}\left( {{\eta ^4}u} \right)dx}.
\end{align}

For $i \ne n + {i_0}$, it holds from the commutativity of ${X_i}$, ${D_{{X_{{i_0}}}, - h,1}}$ and ${D_{{X_{{i_0}}},h,1}}$ that
\[{X_i}\left( {{D_{{X_{{i_0}}}, - h,1}}{D_{{X_{{i_0}}},h,1}}\left( {{\eta ^4}u} \right)\left( x \right)} \right) = {D_{{X_{{i_0}}}, - h,1}}{D_{{X_{{i_0}}},h,1}}\left( {{X_i}\left( {{\eta ^4}u} \right)\left( x \right)} \right).\]
For $i = n + {i_0}$, the non-commutative structure necessitates a more delicate treatment. By invoking Lemma \ref{Le21}, we infer that
\begin{align*}
  &{X_{n + {i_0}}}\left( {{D_{{X_{{i_0}}}, - h,1}}{D_{{X_{{i_0}}},h,1}}\left( {{\eta ^4}u} \right)\left( x \right)} \right)\\
 =& {D_{{X_{{i_0}}}, - h,1}}{X_{n + {i_0}}}\left( {{D_{{X_{{i_0}}},h,1}}\left( {{\eta ^4}u} \right)\left( x \right)} \right) - \left[ {{X_{n + {i_0}}},{X_{{i_0}}}} \right]{D_{{X_{{i_0}}},h,1}}\left( {\left( {{\eta ^4}u} \right)\left( {x{e^{ - s{X_{{i_0}}}}}} \right)} \right)\\
= &{D_{{X_{{i_0}}}, - h,1}}{D_{{X_{{i_0}}},h,1}}\left( {{X_{n + {i_0}}}\left( {{\eta ^4}u} \right)\left( x \right)} \right)\\
& - \left[ {{D_{{X_{{i_0}}}, - h,1}}\left( {T\left( {{\eta ^4}u} \right)\left( {x{e^{s{X_{{i_0}}}}}} \right)} \right) - T{D_{{X_{{i_0}}},h,1}}\left( {\left( {{\eta ^4}u} \right)\left( {x{e^{ - s{X_{{i_0}}}}}} \right)} \right)} \right]\\
 = &{D_{{X_{{i_0}}}, - h,1}}{D_{{X_{{i_0}}},h,1}}\left( {{X_{n + {i_0}}}\left( {{\eta ^4}u} \right)\left( x \right)} \right)\\
 &- \left[ {{D_{{X_{{i_0}}},h,1}}\left( {T\left( {{\eta ^4}u} \right)\left( x \right)} \right) + {D_{{X_{{i_0}}}, - h,1}}\left( {T\left( {{\eta ^4}u} \right)\left( x \right)} \right)} \right],
\end{align*}
so
\begin{align*}
   & \int_\Omega  {\langle {\nabla _H}u,{\nabla _H}\left( {{D_{{X_{{i_0}}}, - h,1}}{D_{{X_{{i_0}}},h,1}}\left( {{\eta ^4}u} \right)} \right)\rangle dx}  \\
   =&  \int_\Omega  {\langle {\nabla _H}u,{D_{{X_{{i_0}}}, - h,1}}{D_{{X_{{i_0}}},h,1}}\left( {{\nabla _H}\left( {{\eta ^4}u} \right)} \right)\rangle dx} \\
   & - \int_\Omega  {\langle {X _{n+i_0}}u,\left[ {{D_{{X_{{i_0}}},h,1}}\left( {T\left( {{\eta ^4}u} \right)\left( x \right)} \right) + {D_{{X_{{i_0}}}, - h,1}}\left( {T\left( {{\eta ^4}u} \right)\left( x \right)} \right)} \right]dx} .
\end{align*}
Moreover, it yields from \eqref{eq41} by applying the property of difference quotients \eqref{eq116} that
\begin{align}\label{eq42}
   & \int_\Omega  {\langle {D_{{X_{{i_0}}},h,1}}{\nabla _H}u,{D_{{X_{{i_0}}},h,1}}\left( {{\nabla _H}\left( {{\eta ^4}u} \right)} \right)\rangle dx}  \nonumber \\
   & + \int_\Omega  {\langle {X _{n+i_0}}u,\left[ {{D_{{X_{{i_0}}},h,1}}\left( {T\left( {{\eta ^4}u} \right)\left( x \right)} \right) + {D_{{X_{{i_0}}}, - h,1}}\left( {T\left( {{\eta ^4}u} \right)\left( x \right)} \right)} \right]dx}  \nonumber\\
   & + \frac{{{c_{Q,s}}}}{2}\iint_{\mathbb{H}^n \times \mathbb{H}^n} {\frac{{\left( {{D_{{X_{{i_0}}},h,1}}u\left( x \right) - {D_{{X_{{i_0}}},h,1}}u\left( y \right)} \right)\left( {{D_{{X_{{i_0}}},h,1}}\left( {{\eta ^4}u} \right)\left( x \right) - {D_{{X_{{i_0}}},h,1}}\left( {{\eta ^4}u} \right)\left( y \right)} \right)}}{{\left\| {{y^{ - 1}}\circ x} \right\|_{{\mathbb{H}^n}}^{Q + 2s}}}dxdy} \nonumber \\
  = &  - \int_\Omega  {f{D_{{X_{{i_0}}}, - h,1}}{D_{{X_{{i_0}}},h,1}}\left( {{\eta ^4}u} \right)dx}  .
\end{align}
Referring to the equality below (5.2) in \cite{DA04}, we know
\begin{align}\label{eq43}
  {D_{{X_{{i_0}}},h,1}}\left( {{\nabla _H}\left( {{\eta ^4}u} \right)} \right)\left( x \right) = & {D_{{X_{{i_0}}},h,1}}\left( {4{\eta ^3}{\nabla _H}\eta  \otimes u + {\eta ^4}{\nabla _H}u} \right)\left( x \right) \nonumber\\
  = & 4{D_{{X_{{i_0}}},h,1}}\eta \left( x \right) \cdot \eta^2 {\left( {x{e^{s{X_{{i_0}}}}}} \right)}{\nabla _H}\eta \left( {x{e^{s{X_{{i_0}}}}}} \right) \otimes u\left( {x{e^{s{X_{{i_0}}}}}} \right)\nonumber\\
  & + 4\eta \left( x \right){D_{{X_{{i_0}}},h,1}}\eta \left( x \right)\eta \left( {x{e^{s{X_{{i_0}}}}}} \right){\nabla _H}\eta \left( {x{e^{s{X_{{i_0}}}}}} \right) \otimes u\left( {x{e^{s{X_{{i_0}}}}}} \right)\nonumber\\
  & + 4\eta^2 {\left( x \right)}{D_{{X_{{i_0}}},h,1}}\eta \left( x \right){\nabla _H}\eta \left( {x{e^{s{X_{{i_0}}}}}} \right) \otimes u\left( {x{e^{s{X_{{i_0}}}}}} \right)\nonumber\\
  & + 4\eta^3 {\left( x \right)}{D_{{X_{{i_0}}},h,1}}{\nabla _H}\eta \left( x \right) \otimes u\left( {x{e^{s{X_{{i_0}}}}}} \right)\nonumber\\
  & + 4\eta^3 {\left( x \right)}{\nabla _H}\eta \left( x \right) \otimes {D_{{X_{{i_0}}},h,1}}u\left( x \right)\nonumber\\
  &+ {D_{{X_{{i_0}}},h,1}}\eta \left( x \right)\eta^3 {\left( {x{e^{s{X_{{i_0}}}}}} \right)}{\nabla _H}u\left( {x{e^{s{X_{{i_0}}}}}} \right)\nonumber\\
  & + \eta \left( x \right){D_{{X_{{i_0}}},h,1}}\eta \left( x \right)\eta ^2{\left( {x{e^{s{X_{{i_0}}}}}} \right)}{\nabla _H}u\left( {x{e^{s{X_{{i_0}}}}}} \right)\nonumber\\
  & + \eta^2 {\left( x \right)}{D_{{X_{{i_0}}},h,1}}\eta \left( x \right)\eta \left( {x{e^{s{X_{{i_0}}}}}} \right){\nabla _H}u\left( {x{e^{s{X_{{i_0}}}}}} \right)\nonumber\\
  & + \eta^3 {\left( x \right)}{D_{{X_{{i_0}}},h,1}}\eta \left( x \right){\nabla _H}u\left( {x{e^{s{X_{{i_0}}}}}} \right)\nonumber\\
  & + \eta^4 {\left( x \right)}{D_{{X_{{i_0}}},h,1}}{\nabla _H}u\left( x \right),
\end{align}
and infer by substituting the above relationship into \eqref{eq42} that
\begin{align}\label{eq44}
 & \int_\Omega  {{\eta ^4}{{\left| {{D_{{X_{{i_0}}},h,1}}{\nabla _H}u} \right|}^2}dx} \nonumber \\
& + \frac{{{c_{Q,s}}}}{2}\iint_{\mathbb{H}^n \times \mathbb{H}^n} {\frac{{\left( {{D_{{X_{{i_0}}},h,1}}u\left( x \right) - {D_{{X_{{i_0}}},h,1}}u\left( y \right)} \right)\left( {{D_{{X_{{i_0}}},h,1}}\left( {{\eta ^4}u} \right)\left( x \right) - {D_{{X_{{i_0}}},h,1}}\left( {{\eta ^4}u} \right)\left( y \right)} \right)}}{{\left\| {{y^{ - 1}} \circ x} \right\|_{{\mathbb{H}^n}}^{Q + 2s}}}dxdy}\nonumber \\
=& - 4\int_\Omega  {\langle {D_{{X_{{i_0}}},h,1}}{\nabla _H}u,{D_{{X_{{i_0}}},h,1}}\eta \left( x \right){\eta ^2}\left( {x{e^{s{X_{{i_0}}}}}} \right){\nabla _H}\eta \left( {x{e^{s{X_{{i_0}}}}}} \right) \otimes u\left( {x{e^{s{X_{{i_0}}}}}} \right)\rangle dx}  \nonumber \\
& - 4\int_\Omega  {\langle{D_{{X_{{i_0}}},h,1}}{\nabla _H}u,\eta \left( x \right){D_{{X_{{i_0}}},h,1}}\eta \left( x \right)\eta \left( {x{e^{s{X_{{i_0}}}}}} \right){\nabla _H}\eta \left( {x{e^{s{X_{{i_0}}}}}} \right) \otimes u\left( {x{e^{s{X_{{i_0}}}}}} \right)\rangle dx} \nonumber \\
&- 4\int_\Omega  {\langle{D_{{X_{{i_0}}},h,1}}{\nabla _H}u,\eta ^2{{\left( x \right)}}{D_{{X_{{i_0}}},h,1}}\eta \left( x \right){\nabla _H}\eta \left( {x{e^{s{X_{{i_0}}}}}} \right) \otimes u\left( {x{e^{s{X_{{i_0}}}}}} \right)\rangle dx} \nonumber \\
& - 4\int_\Omega  {\langle{D_{{X_{{i_0}}},h,1}}{\nabla _H}u,\eta^3 {{\left( x \right)}}{D_{{X_{{i_0}}},h,1}}{\nabla _H}\eta \left( x \right) \otimes u\left( {x{e^{s{X_{{i_0}}}}}} \right)\rangle dx} \nonumber \\
& - 4\int_\Omega  {\langle{D_{{X_{{i_0}}},h,1}}{\nabla _H}u,\eta ^3{{\left( x \right)}}{\nabla _H}\eta \left( x \right) \otimes {D_{{X_{{i_0}}},h,1}}u\left( x \right)\rangle dx} \nonumber \\
& - \int_\Omega  {\langle{D_{{X_{{i_0}}},h,1}}{\nabla _H}u,{D_{{X_{{i_0}}},h,1}}\eta \left( x \right)\eta^3 {{\left( {x{e^{s{X_{{i_0}}}}}} \right)}}{\nabla _H}u\left( {x{e^{s{X_{{i_0}}}}}} \right)\rangle dx} \nonumber \\
& - \int_\Omega  {\langle{D_{{X_{{i_0}}},h,1}}{\nabla _H}u,\eta \left( x \right){D_{{X_{{i_0}}},h,1}}\eta \left( x \right)\eta^2 {{\left( {x{e^{s{X_{{i_0}}}}}} \right)}}{\nabla _H}u\left( {x{e^{s{X_{{i_0}}}}}} \right)\rangle dx} \nonumber \\
& - \int_\Omega  {\langle{D_{{X_{{i_0}}},h,1}}{\nabla _H}u,\eta ^2{{\left( x \right)}}{D_{{X_{{i_0}}},h,1}}\eta \left( x \right)\eta \left( {x{e^{s{X_{{i_0}}}}}} \right){\nabla _H}u\left( {x{e^{s{X_{{i_0}}}}}} \right)\rangle dx} \nonumber \\
& - \int_\Omega  {\langle{D_{{X_{{i_0}}},h,1}}{\nabla _H}u,\eta ^3{{\left( x \right)}}{D_{{X_{{i_0}}},h,1}}\eta \left( x \right){\nabla _H}u\left( {x{e^{s{X_{{i_0}}}}}} \right)\rangle dx} \nonumber \\
&+ \int_\Omega  {{D_{{X_{{i_0}}}, - h,1}}{X_{n + {i_0}}}u\cdot T\left( {{\eta ^4}u} \right)dx}  + \int_\Omega  {{D_{{X_{{i_0}}},h,1}}{X_{n + {i_0}}}u\cdot T\left( {{\eta ^4}u} \right)dx} \nonumber \\
   &  - \int_\Omega  {f{D_{{X_{{i_0}}}, - h,1}}{D_{{X_{{i_0}}},h,1}}\left( {{\eta ^4}u} \right)dx}\nonumber \\
: =& \sum\limits_{i = 1}^{11} {{J_i}} .
\end{align}
First, by applying Young's inequality with $\epsilon$, $\left| {{\nabla _H}\eta } \right| \le c,\;\left| {T\eta } \right| \le c$ and Theorem \ref{Th11}, it follows
\begin{align}\label{eq45}
  \sum\limits_{i = 1}^{10} {{J_i}} \le & 10\epsilon \int_\Omega  {{\eta ^4}{{\left| {{D_{{X_{{i_0}}},h,1}}{\nabla _H}u} \right|}^2}dx} + \epsilon \int_\Omega  {{\eta ^4}{{\left| {{D_{{X_{{i_0}}},-h,1}}{\nabla _H}u} \right|}^2}dx}\nonumber \\
   & +c_\epsilon \int_\Omega  {{\eta ^4}{{\left| {Tu} \right|}^2}dx}  + {c_\epsilon }\int_\Omega  {{{\left| {{\nabla _H}\eta } \right|}^4}{{\left| u \right|}^2}dx} \nonumber \\
   &+ {c_\epsilon }\int_\Omega  {{\eta ^2}{{\left| {{\nabla _H}\eta } \right|}^2}\left( {{{\left| u \right|}^2} + {{\left| {{\nabla _H}u} \right|}^2}} \right)dx}  + {c_\epsilon }\int_\Omega  {{\eta ^2}{{\left| {T\eta } \right|}^2}{{\left| u \right|}^2}dx}\nonumber \\
   \le & 10\epsilon \int_\Omega  {{\eta ^4}{{\left| {{D_{{X_{{i_0}}},h,1}}{\nabla _H}u} \right|}^2}dx} + \epsilon \int_\Omega  {{\eta ^4}{{\left| {{D_{{X_{{i_0}}},-h,1}}{\nabla _H}u} \right|}^2}dx}\nonumber \\
   & + c\left\| u \right\|_{H{W^{1,2}}\left( {\mathbb{H}^n} \right)}^2 + c\int_\Omega  {{{\left| f \right|}^2}dx} .
\end{align}
For ${J_{11}}$, we use Young's inequality with $\epsilon$, \eqref{eq43} and $\left| {{\nabla _H}\eta } \right| \le c,\;\left| {T\eta } \right| \le c$ to get
\begin{align}\label{eq46}
  {J_{11}}\le & \epsilon \int_\Omega  {{{\left| {{D_{{X_{{i_0}}}, - h,1}}{D_{{X_{{i_0}}},h,1}}\left( {{\eta ^4}u} \right)} \right|}^2}dx}  + {c_\epsilon }\int_\Omega  {{{\left| f \right|}^2}dx} \nonumber \\
   \le&  \epsilon \int_\Omega  {{{\left| {{D_{{X_{{i_0}}}, - h,1}}{\nabla _H}\left( {{\eta ^4}u} \right)} \right|}^2}dx}  + {c_\epsilon }\int_\Omega  {{{\left| f \right|}^2}dx}\nonumber \\
   \le &\epsilon \int_\Omega  {{\eta ^4}{{\left| {{D_{{X_{{i_0}}}, - h,1}}{\nabla _H}u} \right|}^2}dx}  + c\epsilon \left\| u \right\|_{H{W^{1,2}}\left( {{\mathbb{H}^n}} \right)}^2 + {c_\epsilon }\int_\Omega  {{{\left| f \right|}^2}dx} .
\end{align}
Therefore, by combining \eqref{eq44}-\eqref{eq46}, we have
\begin{align}\label{eq47}
 & \int_\Omega  {{\eta ^4}{{\left| {{D_{{X_{{i_0}}},h,1}}{\nabla _H}u} \right|}^2}dx} \nonumber \\
& + \frac{{{c_{Q,s}}}}{2}\iint_{\mathbb{H}^n \times \mathbb{H}^n} {\frac{{\left( {{D_{{X_{{i_0}}},h,1}}u\left( x \right) - {D_{{X_{{i_0}}},h,1}}u\left( y \right)} \right)\left( {{D_{{X_{{i_0}}},h,1}}\left( {{\eta ^4}u} \right)\left( x \right) - {D_{{X_{{i_0}}},h,1}}\left( {{\eta ^4}u} \right)\left( y \right)} \right)}}{{\left\| {{y^{ - 1}} \circ x} \right\|_{{\mathbb{H}^n}}^{Q + 2s}}}dxdy}\nonumber \\
   \le & 10\epsilon \int_\Omega  {{\eta ^4}{{\left| {{D_{{X_{{i_0}}},h,1}}{\nabla _H}u} \right|}^2}dx} + 2\epsilon \int_\Omega  {{\eta ^4}{{\left| {{D_{{X_{{i_0}}},-h,1}}{\nabla _H}u} \right|}^2}dx}\nonumber \\
    &+ c(\epsilon+1) \left\| u \right\|_{H{W^{1,2}}\left( {{\mathbb{H}^n}} \right)}^2 + {c_\epsilon }\int_\Omega  {{{\left| f \right|}^2}dx} .
\end{align}

Next, we estimate
$${J_{0}}: = \iint_{\mathbb{H}^n \times \mathbb{H}^n} {\frac{{\left( {{D_{{X_{{i_0}}},h,1}}u\left( x \right) - {D_{{X_{{i_0}}},h,1}}u\left( y \right)} \right)\left( {{D_{{X_{{i_0}}},h,1}}\left( {{\eta ^4}u} \right)\left( x \right) - {D_{{X_{{i_0}}},h,1}}\left( {{\eta ^4}u} \right)\left( y \right)} \right)}}{{\left\| {{y^{ - 1}} \circ x} \right\|_{{\mathbb{H}^n}}^{Q + 2s}}}dxdy}.$$
Note that
\begin{align*}
   & {D_{{X_{{i_0}}},h,1}}\left( {{\eta ^4}u} \right)\left( x \right) = \frac{{\left( {{\eta ^4}u} \right)\left( {x{e^{s{X_{{i_0}}}}}} \right) - \left( {{\eta ^4}u} \right)\left( x \right)}}{h} \\
= & \frac{{\left( {{\eta ^4}\left( {x{e^{s{X_{{i_0}}}}}} \right) - {\eta ^4}\left( x \right)} \right)u\left( {x{e^{s{X_{{i_0}}}}}} \right) + {\eta ^4}\left( x \right)\left( {u\left( {x{e^{s{X_{{i_0}}}}}} \right) - u\left( x \right)} \right)}}{h}\\
=& {D_{{X_{{i_0}}},h,1}}\left( {{\eta ^4}} \right)\left( x \right)u\left( {x{e^{s{X_{{i_0}}}}}} \right) + {\eta ^4}\left( x \right){D_{{X_{{i_0}}},h,1}}u\left( x \right),
\end{align*}
so
\begin{align*}
   & {D_{{X_{{i_0}}},h,1}}\left( {{\eta ^4}u} \right)\left( x \right) - {D_{{X_{{i_0}}},h,1}}\left( {{\eta ^4}u} \right)\left( y \right) \\
  = & {D_{{X_{{i_0}}},h,1}}\left( {{\eta ^4}} \right)\left( x \right)u\left( {x{e^{s{X_{{i_0}}}}}} \right) + {\eta ^4}\left( x \right){D_{{X_{{i_0}}},h,1}}u\left( x \right) - \left( {{D_{{X_{{i_0}}},h,1}}\left( {{\eta ^4}} \right)\left( y \right)u\left( {y{e^{s{X_{{i_0}}}}}} \right) + {\eta ^4}\left( y \right){D_{{X_{{i_0}}},h,1}}u\left( y \right)} \right)\\
   =& \left( {{D_{{X_{{i_0}}},h,1}}\left( {{\eta ^4}} \right)\left( x \right)u\left( {x{e^{s{X_{{i_0}}}}}} \right) - {D_{{X_{{i_0}}},h,1}}\left( {{\eta ^4}} \right)\left( y \right)u\left( {y{e^{s{X_{{i_0}}}}}} \right)} \right) + \left( {\left({\eta ^4}{D_{{X_{{i_0}}},h,1}}u\right)\left( x \right) -\left( {\eta ^4}{D_{{X_{{i_0}}},h,1}}u \right)\left( y \right)} \right)\\
    =& {D_{{X_{{i_0}}},h,1}}\left( {{\eta ^4}} \right)\left( x \right)\left( {u\left( {x{e^{s{X_{{i_0}}}}}} \right) - u\left( {y{e^{s{X_{{i_0}}}}}} \right)} \right) + u\left( {y{e^{s{X_{{i_0}}}}}} \right)\left( {{D_{{X_{{i_0}}},h,1}}\left( {{\eta ^4}} \right)\left( x \right) - {D_{{X_{{i_0}}},h,1}}\left( {{\eta ^4}} \right)\left( y \right)} \right)\\
    & + {\eta ^4}\left( x \right)\left( {{D_{{X_{{i_0}}},h,1}}u\left( x \right) - {D_{{X_{{i_0}}},h,1}}u\left( y \right)} \right) + {D_{{X_{{i_0}}},h,1}}u\left( y \right)\left( {{\eta ^4}\left( x \right) - {\eta ^4}\left( y \right)} \right)\\
    =&{\eta ^4}\left( x \right)\left( {{D_{{X_{{i_0}}},h,1}}u\left( x \right) - {D_{{X_{{i_0}}},h,1}}u\left( y \right)} \right) + {D_{{X_{{i_0}}},h,1}}u\left( y \right)\left( {{\eta ^4}\left( x \right) - {\eta ^4}\left( y \right)} \right)\\
    &+u\left( {y{e^{s{X_{{i_0}}}}}} \right)\left( {{D_{{X_{{i_0}}},h,1}}\left( {{\eta ^4}} \right)\left( x \right) - {D_{{X_{{i_0}}},h,1}}\left( {{\eta ^4}} \right)\left( y \right)} \right)+{D_{{X_{{i_0}}},h,1}}\left( {{\eta ^4}} \right)\left( x \right)\left( {u\left( {x{e^{s{X_{{i_0}}}}}} \right) - u\left( {y{e^{s{X_{{i_0}}}}}} \right)} \right) .
\end{align*}
Thus
\begin{align}\label{eq48}
   & {J_{0}} =  \iint_{\mathbb{H}^n \times \mathbb{H}^n} {\frac{{\eta ^4}\left( x \right){\left| {{D_{{X_{{i_0}}},h,1}}u\left( x \right) - {D_{{X_{{i_0}}},h,1}}u\left( y \right)} \right|^2}}{{\left\| {{y^{ - 1}} \circ x} \right\|_{{\mathbb{H}^n}}^{Q + 2s}}}dxdy} \nonumber\\
   &+ \iint_{\mathbb{H}^n \times \mathbb{H}^n} {\frac{{\left( {{D_{{X_{{i_0}}},h,1}}u\left( x \right) - {D_{{X_{{i_0}}},h,1}}u\left( y \right)} \right) {D_{{X_{{i_0}}},h,1}}u\left( y \right)\left( {{\eta ^4}\left( x \right) - {\eta ^4}\left( y \right)}  \right)}}{{\left\| {{y^{ - 1}} \circ x} \right\|_{{\mathbb{H}^n}}^{Q + 2s}}}dxdy} \nonumber\\
  & +\iint_{\mathbb{H}^n \times \mathbb{H}^n} {\frac{{\left( {{D_{{X_{{i_0}}},h,1}}u\left( x \right) - {D_{{X_{{i_0}}},h,1}}u\left( y \right)} \right)u\left( {y{e^{s{X_{{i_0}}}}}} \right)\left( {{D_{{X_{{i_0}}},h,1}}\left( {{\eta ^4}} \right)\left( x \right) - {D_{{X_{{i_0}}},h,1}}\left( {{\eta ^4}} \right)\left( y \right)} \right)}}{{\left\| {{y^{ - 1}} \circ x} \right\|_{{\mathbb{H}^n}}^{Q + 2s}}}dxdy}\nonumber\\
  & +\iint_{\mathbb{H}^n \times \mathbb{H}^n} {\frac{{\left( {{D_{{X_{{i_0}}},h,1}}u\left( x \right) - {D_{{X_{{i_0}}},h,1}}u\left( y \right)} \right){D_{{X_{{i_0}}},h,1}}\left( {{\eta ^4}} \right)\left( x \right)\left( {u\left( {x{e^{s{X_{{i_0}}}}}} \right) - u\left( {y{e^{s{X_{{i_0}}}}}} \right)} \right)}}{{\left\| {{y^{ - 1}} \circ x} \right\|_{{\mathbb{H}^n}}^{Q + 2s}}}dxdy}\nonumber\\
  : =& {J_{01}} + {J_{02}}+ {J_{03}}+ {J_{04}}.
\end{align}

Note that
\begin{align*}
 {J_{01}}& =\iint_{\mathbb{H}^n \times \mathbb{H}^n} {\frac{{{\eta ^4}\left( x \right){{\left| {{D_{{X_{{i_0}}},h,1}}u\left( x \right) - {D_{{X_{{i_0}}},h,1}}u\left( y \right)} \right|}^2}}}{{\left\| {{y^{ - 1}} \circ x} \right\|_{{\mathbb{H}^n}}^{Q + 2s}}}dxdy} \\
  &  =\iint_{\mathbb{H}^n \times \mathbb{H}^n} {\frac{{{\eta ^4}\left( y \right){{\left| {{D_{{X_{{i_0}}},h,1}}u\left( x \right) - {D_{{X_{{i_0}}},h,1}}u\left( y \right)} \right|}^2}}}{{\left\| {{y^{ - 1}} \circ x} \right\|_{{\mathbb{H}^n}}^{Q + 2s}}}dxdy},
\end{align*}
so by applying Young's inequality with $\epsilon '$, it deduces
\begin{align}\label{eq49}
   {J_{02}} =& \iint_{\mathbb{H}^n \times \mathbb{H}^n} {\frac{{\left( {{D_{{X_{{i_0}}},h,1}}u\left( x \right) - {D_{{X_{{i_0}}},h,1}}u\left( y \right)} \right){D_{{X_{{i_0}}},h,1}}u\left( y \right)\left( {{\eta ^4}\left( x \right) - {\eta ^4}\left( y \right)} \right)}}{{\left\| {{y^{ - 1}} \circ x} \right\|_{{\mathbb{H}^n}}^{Q + 2s}}}dxdy} \nonumber \\
   =& \iint_{\mathbb{H}^n \times \mathbb{H}^n} {\frac{{\left( {{D_{{X_{{i_0}}},h,1}}u\left( x \right) - {D_{{X_{{i_0}}},h,1}}u\left( y \right)} \right){D_{{X_{{i_0}}},h,1}}u\left( y \right)\left( {{\eta ^2}\left( x \right) +{\eta ^2}\left( y \right)} \right)\left( {{\eta }\left( x \right) - {\eta }\left( y \right)} \right)}}{{\left\| {{y^{ - 1}} \circ x} \right\|_{{\mathbb{H}^n}}^{Q + 2s}}}} \nonumber \\
   &\times\left( {{\eta }\left( x \right) + {\eta }\left( y \right)} \right)dxdy\nonumber \\
   \le& 2\iint_{\mathbb{H}^n \times \mathbb{H}^n} {\frac{{\left( {{D_{{X_{{i_0}}},h,1}}u\left( x \right) - {D_{{X_{{i_0}}},h,1}}u\left( y \right)} \right){D_{{X_{{i_0}}},h,1}}u\left( y \right)\left( {{\eta ^2}\left( x \right) +{\eta ^2}\left( y \right)} \right)\left( {{\eta }\left( x \right) - {\eta }\left( y \right)} \right)}}{{\left\| {{y^{ - 1}} \circ x} \right\|_{{\mathbb{H}^n}}^{Q + 2s}}}dxdy} \nonumber \\
   =& 2\iint_{\mathbb{H}^n \times \mathbb{H}^n} {\frac{{{\eta ^2}\left( x \right)\left( {{D_{{X_{{i_0}}},h,1}}u\left( x \right) - {D_{{X_{{i_0}}},h,1}}u\left( y \right)} \right){D_{{X_{{i_0}}},h,1}}u\left( y \right)\left( {\eta \left( x \right) - \eta \left( y \right)} \right)}}{{\left\| {{y^{ - 1}} \circ x} \right\|_{\mathbb{H}^n}^{Q + 2s}}}dxdy}\nonumber \\
  & +2\iint_{\mathbb{H}^n \times \mathbb{H}^n} {\frac{{{\eta ^2}\left( y \right)\left( {{D_{{X_{{i_0}}},h,1}}u\left( x \right) - {D_{{X_{{i_0}}},h,1}}u\left( y \right)} \right){D_{{X_{{i_0}}},h,1}}u\left( y \right)\left( {\eta \left( x \right) - \eta \left( y \right)} \right)}}{{\left\| {{y^{ - 1}} \circ x} \right\|_{\mathbb{H}^n}^{Q + 2s}}}dxdy}\nonumber \\
   \le &\epsilon '\iint_{\mathbb{H}^n \times \mathbb{H}^n} {\frac{{{\eta ^4}\left( x \right){{\left| {{D_{{X_{{i_0}}},h,1}}u\left( x \right) - {D_{{X_{{i_0}}},h,1}}u\left( y \right)} \right|}^2}}}{{\left\| {{y^{ - 1}} \circ x} \right\|_{{\mathbb{H}^n}}^{Q + 2s}}}dxdy}\nonumber \\
  & + {c_{\epsilon '}}\iint_{\mathbb{H}^n \times \mathbb{H}^n} {\frac{{{{\left| {{D_{{X_{{i_0}}},h,1}}u\left( y \right)} \right|}^2}{{\left| {\eta \left( x \right) - \eta \left( y \right)} \right|}^2}}}{{\left\| {{y^{ - 1}} \circ x} \right\|_{{\mathbb{H}^n}}^{Q + 2s}}}dxdy}\nonumber \\
  =&\epsilon '{J_{01}} + {c_{\epsilon '}}\iint_{\mathbb{H}^n \times \mathbb{H}^n} {\frac{{{{\left| {{D_{{X_{{i_0}}},h,1}}u\left( y \right)} \right|}^2}{{\left| {\eta \left( x \right) - \eta \left( y \right)} \right|}^2}}}{{\left\| {{y^{ - 1}} \circ x} \right\|_{{\mathbb{H}^n}}^{Q + 2s}}}dxdy}.
\end{align}
Since $\eta  \in C_0^\infty \left( {\mathbb{H}^n}  \right)$, we have that for any $y \in {\mathbb{H}^n}$,
\begin{align*}
   & \int_{{\mathbb{H}^n}} {\frac{{{{\left| {\eta \left( x \right) - \eta \left( y \right)} \right|}^2}}}{{\left\| {{y^{ - 1}} \circ x} \right\|_{{\mathbb{H}^n}}^{Q + 2s}}}dx}  \\
   \le&  \left( {\mathop {\sup }\limits_{{\mathbb{H}^n}} {{\left| {{\nabla _H}\eta } \right|}^2}} \right)\int_{\left\{ {\left\| {{y^{ - 1}} \circ x} \right\|_{{\mathbb{H}^n}}^{} \le 1} \right\}} {\frac{1}{{\left\| {{y^{ - 1}} \circ x} \right\|_{{\mathbb{H}^n}}^{Q + 2\left( {s - 1} \right)}}}dx} \\
   & + 4\left( {\mathop {\sup }\limits_{{\mathbb{H}^n}} {{\left| \eta  \right|}^2}} \right)\int_{\left\{ {\left\| {{y^{ - 1}} \circ x} \right\|_{{\mathbb{H}^n}}^{} > 1} \right\}} {\frac{1}{{\left\| {{y^{ - 1}} \circ x} \right\|_{{\mathbb{H}^n}}^{Q + 2s}}}dx} \\
    = :&c\left( {\eta ,s} \right).
\end{align*}
Moreover, it follows from \eqref{eq49} that
\begin{align}\label{eq410}
   {J_{02}} \le &\epsilon '{J_{01}} + {c_{\epsilon '}}\iint_{\mathbb{H}^n \times \mathbb{H}^n} {\frac{{{{\left| {{D_{{X_{{i_0}}},h,1}}u\left( y \right)} \right|}^2}{{\left| {\eta \left( x \right) - \eta \left( y \right)} \right|}^2}}}{{\left\| {{y^{ - 1}} \circ x} \right\|_{{\mathbb{H}^n}}^{Q + 2s}}}dxdy}\nonumber \\
    \le &\epsilon '{J_{01}}  + c\left( {\epsilon ',\eta ,s} \right)\int_{{\mathbb{H}^n}} {{{\left| {{D_{{X_{{i_0}}},h,1}}u\left( y \right)} \right|}^2}dy}\nonumber \\
     \le &\epsilon '{J_{01}} + c\left( {\epsilon ',\eta ,s} \right)\int_{{\mathbb{H}^n}} {{{\left| {{\nabla _H}u\left( y \right)} \right|}^2}dy} .
\end{align}
Similar to the estimation of $ {J_{02}}$, we have
\begin{equation}\label{eq411}
 {J_{03}} \le \epsilon '{J_{01}} + c\left( {\epsilon ',\eta ,s} \right)\int_{{\mathbb{H}^n}} {{{\left| {u\left( y \right)} \right|}^2}dy}.
\end{equation}

Next, we estimate ${J_{04}}$. Deserve
\begin{align*}
   {D_{{X_{{i_0}}},h,1}}\left( {{\eta ^4}} \right)\left( x \right)  =& {D_{{X_{{i_0}}},h,1}}\eta \left( x \right)\eta^3 {\left( {x{e^{s{X_{{i_0}}}}}} \right)}\nonumber\\
  & + \eta \left( x \right){D_{{X_{{i_0}}},h,1}}\eta \left( x \right)\eta ^2{\left( {x{e^{s{X_{{i_0}}}}}} \right)}\nonumber\\
  & + \eta^2 {\left( x \right)}{D_{{X_{{i_0}}},h,1}}\eta \left( x \right)\eta \left( {x{e^{s{X_{{i_0}}}}}} \right)\nonumber\\
  & + \eta^3 {\left( x \right)}{D_{{X_{{i_0}}},h,1}}\eta \left( x \right),
\end{align*}
it follows by applying Young's inequality with $\epsilon '$ that 
\begin{align}\label{eq412}
   {J_{04}} \le &4\epsilon '{J_{01}} + {c_{\epsilon '}}\iint_{\mathbb{H}^n \times \mathbb{H}^n}{\frac{{{{\left| {{D_{{X_{{i_0}}},h,1}}\eta\left( x \right)} \right|}^2}{{\left| {u \left( x{e^{s{X_{{i_0}}}}} \right) - u \left( y{e^{s{X_{{i_0}}}}} \right)} \right|}^2}}}{{\left\| {{y^{ - 1}} \circ x} \right\|_{{\mathbb{H}^n}}^{Q + 2s}}}dxdy}\nonumber \\
   \le&4\epsilon '{J_{01}} +{c_{\epsilon '}}\iint_{{\mathbb{H}^n} \times \left\{ {{\mathbb{H}^n} \cap \left\{ {{{\left\| {{y^{ - 1}} \circ x} \right\|}_{{\mathbb{H}^n}}} \ge 1} \right\}} \right\}} {\frac{{{{\left| {u\left( x \right) - u\left( y \right)} \right|}^2}}}{{\left\| {{y^{ - 1}} \circ x} \right\|_{{\mathbb{H}^n}}^{Q + 2s}}}dxdy} \nonumber \\
    &+ {c_{\epsilon '}}\iint_{{\mathbb{H}^n} \times \left\{ {{\mathbb{H}^n} \cap \left\{ {{{\left\| {{y^{ - 1}} \circ x} \right\|}_{{\mathbb{H}^n}}} < 1} \right\}} \right\}} {\frac{{{{\left| {u\left( x \right) - u\left( y \right)} \right|}^2}}}{{\left\| {{y^{ - 1}} \circ x} \right\|_{{\mathbb{H}^n}}^{Q + 2s}}}dxdy}   \nonumber\\
   \le &4\epsilon '{J_{01}} + {c(\epsilon ',Q,s)}\left\| u \right\|_{{L^2}\left( {{\mathbb{H}^n}} \right)}^2 + {c_{\epsilon '}}\iint_{{\mathbb{H}^n} \times  {{B_1}} } {\frac{{{{\left| {u\left( x \right) - u\left( {x \circ z} \right)} \right|}^2}}}{{\left\| z \right\|_{{\mathbb{H}^n}}^{Q + 2s}}}d x dz} \nonumber\\
    = &4\epsilon '{J_{01}} + {c(\epsilon ',Q,s)}\left\| u \right\|_{{L^2}\left( {{\mathbb{H}^n}} \right)}^2 + {c_{\epsilon '}}\iint_{{\mathbb{H}^n} \times  {{B_1}} } {{{\left| {\frac{{u\left( x \right) - u\left( {x \circ z} \right)}}{{\left\| z \right\|_{{\mathbb{H}^n}}}}} \right|}^2}\frac{1}{{\left\| z \right\|_{{\mathbb{H}^n}}^{Q + 2\left( {s - 1} \right)}}}dx dz} \nonumber\\
     \le& 4\epsilon '{J_{01}} + {c(\epsilon ',Q,s)}\left\| u \right\|_{{L^2}\left( {{\mathbb{H}^n}} \right)}^2 + {c_{\epsilon '}}\iint_{{\mathbb{H}^n} \times {B_1}} {{{\left| {\int\limits_0^1 {\left| {{\nabla _H}u\left( {x \circ tz} \right)} \right|} dt} \right|}^2}\frac{1}{{\left\| z \right\|_{{\mathbb{H}^n}}^{Q + 2\left( {s - 1} \right)}}}dx dz}\nonumber\\
      \le& 4\epsilon '{J_{01}} + {c(\epsilon ',Q,s)}\left\| u \right\|_{{L^2}\left( {{\mathbb{H}^n}} \right)}^2 + {c_{\epsilon '}}\iint_{{\mathbb{H}^n} \times {B_1}} {\int\limits_0^1 {{{\left| {{\nabla _H}u\left( {x \circ tz} \right)} \right|}^2}} \frac{1}{{\left\| z \right\|_{{\mathbb{H}^n}}^{Q + 2\left( {s - 1} \right)}}}dtdx dz}  \nonumber\\
      =& 4\epsilon '{J_{01}} + {c(\epsilon ',Q,s)}\left\| u \right\|_{{L^2}\left( {{\mathbb{H}^n}} \right)}^2 + {c_{\epsilon '}}\left\| {{\nabla _H}u} \right\|_{{L^2}\left( {{\mathbb{H}^n}} \right)}^2\int_{{B_1}} {\frac{1}{{\left\| z \right\|_{{\mathbb{H}^n}}^{Q + 2\left( {s - 1} \right)}}}dz}    \nonumber\\
      = &4\epsilon '{J_{01}} + {c(\epsilon ',Q,s)}\left\| u \right\|_{H{W^{1,2}}\left( {{\mathbb{H}^n}} \right)}^2.
\end{align}

Therefore, it yields by combining \eqref{eq47}-\eqref{eq412} that
\begin{align*}
  & \int_\Omega  {{\eta ^4}{{\left| {{D_{{X_{{i_0}}},h,1}}{\nabla _H}u} \right|}^2}dx}+\frac{{{c_{Q,s}}}}{2}\left(1-6\epsilon'\right)J_{01}\nonumber\\
  \le & 10\epsilon \int_\Omega  {{\eta ^4}{{\left| {{D_{{X_{{i_0}}},h,1}}{\nabla _H}u} \right|}^2}dx} + 2\epsilon \int_\Omega  {{\eta ^4}{{\left| {{D_{{X_{{i_0}}},-h,1}}{\nabla _H}u} \right|}^2}dx}\nonumber \\
    &+ c\left(\epsilon, \epsilon',Q,s\right)\left\| u \right\|_{H{W^{1,2}}\left( {\mathbb{H}^n} \right)}^2 + {c_\epsilon }\int_\Omega  {{{\left| f \right|}^2}dx}.
\end{align*}
By taking $\epsilon'<\frac{1}{6}$, and using $J_{01}\ge 0$, we have
\begin{align}\label{eq413}
   &  \int_\Omega  {{\eta ^4}{{\left| {{D_{{X_{{i_0}}},h,1}}{\nabla _H}u} \right|}^2}dx} \nonumber\\
 \le & 10\epsilon \int_\Omega  {{\eta ^4}{{\left| {{D_{{X_{{i_0}}},h,1}}{\nabla _H}u} \right|}^2}dx} + 2\epsilon \int_\Omega  {{\eta ^4}{{\left| {{D_{{X_{{i_0}}},-h,1}}{\nabla _H}u} \right|}^2}dx}\nonumber \\
    &+ c\left(\epsilon, Q,s\right)\left\| u \right\|_{H{W^{1,2}}\left( {\mathbb{H}^n} \right)}^2+ {c_\epsilon }\int_\Omega  {{{\left| f \right|}^2}dx}.
\end{align}

Taking the test function in \eqref{eq15}
\[\varphi  = {D_{{X_{{i_0}}},h,1}}{D_{{X_{{i_0}}}, - h,1}}\left( {{\eta ^4}u} \right)\left( x \right),\]
we can get the estimates similarly to \eqref{eq413} for $\int_\Omega  {{\eta ^4}{{\left| {{D_{{X_{{i_0}}},-h,1}}{\nabla _H}u} \right|}^2}dx}$, just needing to replace $x{e^{s{X_{{i_0}}}}}$ with $x{e^{ - s{X_{{i_0}}}}}$. Then we add those estimates, take $\epsilon $ small enough to get
\begin{align}\label{eq414}
    & \int_\Omega  {{\eta ^4}{{\left| {{D_{{X_{{i_0}}},h,1}}{\nabla _H}u} \right|}^2}dx} + \int_\Omega  {{\eta ^4}{{\left| {{D_{{X_{{i_0}}},-h,1}}{\nabla _H}u} \right|}^2}dx}\nonumber \\
    \le& c\left\| u \right\|_{H{W^{1,2}}\left( {\mathbb{H}^n} \right)}^2+ c\int_\Omega  {{{\left| f \right|}^2}dx}.
\end{align}
Passing to the limit as $h \to 0$ via Lemma \ref{Le22}, we have
\[\int_{B\left( {{x_0},\frac{r}{4}} \right)}  {{{\left| {\nabla _H^2u\left( x \right)} \right|}^2}dx} \le\int_\Omega  {{{{\eta ^4}\left| {\nabla _H^2u\left( x \right)} \right|}^2}dx} \le c\left\| u \right\|_{H{W^{1,2}}\left( {\mathbb{H}^n} \right)}^2+ c\int_\Omega  {{{\left| f \right|}^2}dx}.\]

\section{Proof of Theorem \ref{Th13}}

In this section, we prove higher weak differentiability to equation \eqref{eq11}, i.e., Theorem 1.4. However, due to the presence of the non-local term, we cannot directly take derivatives of equation \eqref{eq11}. To overcome this difficulty, we need to combine the truncation discussion with the use of difference quotient, and obtain the following three lemmas and a proposition.

In the sequel, we adopt the following notation: given an arbitrary set $A \subseteq \mathbb{H}^n$ and a constant $d > 0$, we define the $d$-neighborhood of $A$ as
\begin{equation*}
A_d := \{x \in \mathbb{H}^n : \text{dist}(x, A) < d\}.
\end{equation*}
Moreover, for any $w \in L^2(\mathbb{H}^n)$, we define the finite difference operator in the $X_i$-direction as
\begin{equation*}
w_h(x) := D_{X_i,h} w(x) = \frac{w(xe^{hX_i}) - w(x)}{h},
\end{equation*}
where $h \in \mathbb{R} \setminus \{0\}$ and $i \in \{1, \dots, 2n\}$.

\begin{lemma}\label{Le51}
Let $u \in H{W^{1,2}}\left( {{\mathbb{H}^n}} \right)$ be a weak solution to \eqref{eq11}, and $\Omega '$ be an open subset with $\bar \Omega ' \subseteq \Omega\subseteq{\mathbb{H}^n}$. Write $\rho  = {\rm{dist}}\left( {\Omega ',\partial \Omega } \right) > 0$. If $\left| h \right| < \rho $, then ${u_h}$ is a weak solution to the equation
\begin{equation}\label{eq52}
 - \Delta_{\mathbb{H}^n} {u_h}\left( x \right) + {c_{Q,s}}P.V.\int_{{\mathbb{H}^n}} {\frac{{{u_h}\left( x \right) - {u_h}\left( y \right)}}{{\left\| {{y^{ - 1}} \circ x} \right\|_{{\mathbb{H}^n}}^{Q + 2s}}}dy}  = {f_h}\left( x \right),\;x \in \Omega '.
\end{equation}
\end{lemma}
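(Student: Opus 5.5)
The plan is to test the weak formulation \eqref{eq15} of $u$ with a backward difference quotient of an arbitrary test function, and then move the difference quotient back onto $u$ by a change of variables. This follows the Euclidean argument for mixed operators.

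\textbf{Step 1 (admissible test function).} Fix $\varphi\in HW^{1,2}_0(\Omega')$ and $0<|h|<\rho$. Set
\[
\psi(x):=\varphi_{-h}(x)=\frac{\varphi(xe^{-hX_i})-\varphi(x)}{-h}.
\]
Since $d(xe^{-hX_i},x)\le |h|<\rho$, the support of $\psi$ lies in $(\Omega')_\rho\subseteq\Omega$. Moreover $\psi\in HW^{1,2}_0(\Omega)$, because $\nabla_H$ of a right translate is controlled by Lemma \ref{Le21}. Hence $\psi$ is an admissible test function in \eqref{eq15}.

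\textbf{Step 2 (source and local terms).} The right-hand side is handled by the discrete integration by parts \eqref{eq116}:
\[
\int_\Omega f\,\psi\,dx=\int_{\Omega'} f_h\,\varphi\,dx .
\]
This uses only that Lebesgue measure is invariant under the right translation $x\mapsto xe^{hX_i}$, which holds because $\mathbb{H}^n$ is unimodular.

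For the local term, I write $\nabla_H\psi$ through the commutation identity \eqref{eq210} and then apply \eqref{eq116}. This produces
\[
\int \nabla_H u_h\cdot\nabla_H\varphi\,dx
\]
plus an error coming from the $j=n+i$ (or $j=i-n$) component, namely a multiple of $\int X_{n+i}u\,T\varphi(xe^{-hX_i})\,dx$. I would like this error to cancel with the analogous error produced when $\nabla_H u_h$ is computed directly via \eqref{eq29}. Concretely, after the substitution $x\mapsto xe^{hX_i}$ both errors take the form $\pm\int X_{n+i}u(xe^{hX_i})\,T\varphi\,dx$. Checking the signs carefully is the first computation I would do.

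\textbf{Step 3 (nonlocal term).} Write the nonlocal bilinear form as
\[
\mathcal{E}(u,\psi)=\tfrac{c_{Q,s}}2\iint (u(x)-u(y))(\psi(x)-\psi(y))K(x,y)\,dx\,dy,
\qquad K(x,y)=\|y^{-1}\circ x\|^{-Q-2s}_{\mathbb{H}^n}.
\]
I apply the simultaneous change of variables $(x,y)\mapsto(xe^{hX_i},ye^{hX_i})$, which preserves $dx\,dy$. This gives
\[
\mathcal{E}(u,\psi)=\mathcal{E}(u_h,\varphi)
\]
provided $K(xe^{hX_i},ye^{hX_i})=K(x,y)$.

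\textbf{Main obstacle.} I expect Step 3 to be the crux. The kernel is invariant under left translations, but under right translations one gets
\[
(ye^{hX_i})^{-1}(xe^{hX_i})=e^{-hX_i}(y^{-1}x)e^{hX_i},
\]
which is a conjugate of $y^{-1}x$. Conjugation leaves the horizontal part unchanged but shifts the $t$-component by $h\,\omega\bigl(e_i,(y^{-1}x)'\bigr)$, where $\omega$ is the symplectic form in \eqref{eq21}. So the Korányi norm is not exactly preserved.

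I would first try to compute $K(xe^{hX_i},ye^{hX_i})-K(x,y)$ explicitly and show that it generates a term of the same structure as the commutator error in Step 2. If that works, the two errors either cancel or combine into a lower-order right-hand side $g_h$ with $\|g_h\|_{L^2}\lesssim \|Tu\|_{L^2_{loc}}+\|u\|_{HW^{1,2}}$, which is controlled by Theorem \ref{Th11}.

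If the identity holds only up to such a term, the honest conclusion is weaker than stated. In that case $u_h$ solves \eqref{eq52} with $f_h$ replaced by $f_h+g_h$, which would still suffice for the later bootstrap. If neither cancellation nor a usable bound on $g_h$ can be obtained, I would restrict to directions and kernels for which right-translation invariance genuinely holds.
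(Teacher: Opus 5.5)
\textbf{There is a genuine gap, and it cannot be filled, because Lemma \ref{Le51} is false for horizontal $X_i$.} Your Steps 1--3 follow the paper's proof exactly. The difference is that the paper simply asserts the two identities you were worried about: \eqref{eq53} is your Step 2 with no error term, and \eqref{eq56} is your Step 3 identity, stated ``for any $h$''.

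\emph{Step 2.} The sign check goes against you: the two commutator contributions are equal, so they add rather than cancel. For $i\le n$, your computation via \eqref{eq29}--\eqref{eq210} gives
\[
\int\nabla_H u_h\cdot\nabla_H\varphi\,dx+\int\nabla_H u\cdot\nabla_H\varphi_{-h}\,dx=-2\int (X_{n+i}u)(xe^{hX_i})\,T\varphi\,dx+h\int (Tu)(xe^{hX_i})\,T\varphi\,dx .
\]
Equivalently, $-\Delta_{\mathbb{H}^n}u_h=(-\Delta_{\mathbb{H}^n}u)_h+\bigl((2X_{n+i}T-hT^2)u\bigr)(\cdot\,e^{hX_i})$. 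As $h\to0$ the extra term tends to $[-\Delta_{\mathbb{H}^n},X_i]u=2X_{n+i}Tu$, not to $0$.

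\emph{Step 3.} For $w=(w',t)$ one has $e^{-hX_i}we^{hX_i}=(w',t-hw_{n+i})$, so the Kor\'anyi kernel is not invariant under the diagonal right translation.

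\emph{No compensation.} The two defects cannot cancel each other. Take $u\in C_0^\infty(\mathbb{H}^n)$ supported at positive distance from $\overline{\Omega}$. It is a weak solution with $f(x)=-c_{Q,s}\int u(y)\|y^{-1}\circ x\|_{\mathbb{H}^n}^{-Q-2s}dy$. Moreover $u_h\equiv0$ near $\overline{\Omega'}$, so every local term vanishes and \eqref{eq52} reduces to
\[
\int_{\mathbb{H}^n} u(z)\Bigl(k(e^{hX_i}z^{-1}x)-k(z^{-1}x\,e^{hX_i})\Bigr)dz=0\quad\text{for } x\in\Omega',\qquad k(w)=\|w\|_{\mathbb{H}^n}^{-Q-2s}.
\]
Since $\|e^{hX_i}w\|_{\mathbb{H}^n}^4-\|we^{hX_i}\|_{\mathbb{H}^n}^4=2h\,t\,w_{n+i}$, a nonnegative bump $u$ centred at $z_0=x_0\circ(Re_{n+i},R^2)^{-1}$, with $x_0\in\Omega'$ and $R$ large, violates this identity.

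\textbf{Your fallback.} Only the weaker version is tenable. Writing $L$ for the nonlocal operator in \eqref{eq11}, $u_h$ solves the equation in $\Omega'$ with right-hand side $f_h+g_h$, where
\[
g_h=\bigl((2X_{n+i}T-hT^2)u\bigr)(\cdot\,e^{hX_i})+\frac1h\Bigl(L\bigl(u(\cdot\,e^{hX_i})\bigr)-(Lu)(\cdot\,e^{hX_i})\Bigr).
\]
Your bound $\|g_h\|_{L^2}\lesssim\|Tu\|_{L^2_{loc}}+\|u\|_{HW^{1,2}}$ is not justified.
\begin{itemize}
\item The local part involves $T\nabla_H u$, which is known only locally, from \eqref{eq17}. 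Its piece $-hT^2u$ makes sense only in the weak form above.
\item The nonlocal part tends to the commutator $[L,X_i]u$. This is homogeneous of degree $2s+1$, one more than $L$, so it is not a lower-order perturbation and needs its own near-diagonal estimate.
\item For Proposition \ref{Pro52} you would also need $g_h$ bounded in $HW^{m,2}$ uniformly in $h$. Neither you nor the paper supplies this.
\end{itemize}

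\textbf{Smaller points.}
\begin{itemize}
\item In Step 1, Lemma \ref{Le21} requires $T\varphi\in L^2_{loc}$. For general $\varphi\in HW^{1,2}_0(\Omega')$ the right translate need not lie in $HW^{1,2}$, so work with $\varphi\in C_0^\infty(\Omega')$, as the paper does.
\item For the same reason, $u_h\in HW^{1,2}(\mathbb{H}^n)$, which Definition \ref{De11} requires, is not automatic: $X_{n+i}\bigl(u(\cdot\,e^{hX_i})\bigr)=(X_{n+i}u-hTu)(\cdot\,e^{hX_i})$.
\item Since $\int F\varphi_{-h}\,dx=-\int F_h\varphi\,dx$, Steps 2--3 carry a consistent, harmless sign slip.
\end{itemize}

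Your last resort is the right instinct. Because $e^{hT}$ is central, $D_{T,h}$ commutes with both $\Delta_{\mathbb{H}^n}$ and $L$; this is why Section 3 works. Left difference quotients $h^{-1}\bigl(u(e^{hX_i}x)-u(x)\bigr)$ also commute with both operators. But neither of these is the quotient $u_h$ of Lemma \ref{Le51}.
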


\textbf{Proof.} It is noted that the formula
\begin{equation}\label{eq53}
\int_{\Omega '} {{\nabla _H}{u_h}\cdot{\nabla _H}\varphi dx}  = \int_{{\mathbb{H}^n}} {{\nabla _H}{u_h}\cdot{\nabla _H}\varphi dx}  =  - \int_{{\mathbb{H}^n}} {{\nabla _H}u\cdot{\nabla _H}{\psi _{ - h}}dx} 
\end{equation}
holds for any $\varphi  \in C_0^\infty \left( {\Omega ',\mathbb{R}} \right)$, where ${\psi _{ - h}}\left( x \right) =  - \frac{{\varphi \left( {x{e^{ - hZ}}} \right) - \varphi \left( x \right)}}{h}$. Due to ${\rm{supp}}\left( \varphi  \right) \subseteq \Omega '$, it follows
\[{\rm{supp}}\left( {\varphi \left( {\bullet{e^{ - hZ}}} \right)} \right) \subseteq \Omega ' \circ {e^{hZ}} \subseteq \left\{ {x \in {\mathbb{H}^n}:{\rm{dist}}\left( {x,\Omega '} \right) < \left| h \right|} \right\} = :{\Omega '_h}.\]
Hence, if $\left| h \right| < \rho $, then

(1) ${\psi _{ - h}} \in C_0^\infty \left( {{\mathbb{H}^n},\mathbb{R}} \right);$

(2) ${\rm{supp}}\left( {{\psi _{ - h}}} \right) \subseteq {\Omega '_h} \subseteq \Omega .$\\
In particular,
\begin{equation}\label{eq54}
{\psi _{ - h}} \in C_0^\infty \left( {{{\Omega '}_h},\mathbb{R}} \right).
\end{equation}
Therefore, \eqref{eq53} can be written as
\begin{equation}\label{eq55}
\int_{\Omega '} {{\nabla _H}{u_h}\cdot{\nabla _H}\varphi dx}  =  - \int_\Omega  {{\nabla _H}u\cdot{\nabla _H}{\psi _{ - h}}dx} .
\end{equation}

We note that for any $h \in \mathbb{R}$,
\begin{equation}\label{eq56}
\iint_{\mathbb{H}^n \times \mathbb{H}^n} {\frac{{\left( {{u_h}\left( x \right) - {u_h}\left( y \right)} \right)\left( {\varphi \left( x \right) - \varphi \left( y \right)} \right)}}{{\left\| {{y^{ - 1}} \circ x} \right\|_{{\mathbb{H}^n}}^{Q + 2s}}}dxdy}  =  - \iint_{\mathbb{H}^n \times \mathbb{H}^n} {\frac{{\left( {u\left( x \right) - u\left( y \right)} \right)\left( {{\psi _{ - h}}\left( x \right) - {\psi _{ - h}}\left( y \right)} \right)}}{{\left\| {{y^{ - 1}} \circ x} \right\|_{{\mathbb{H}^n}}^{Q + 2s}}}dxdy} ,
\end{equation}
so by combining \eqref{eq55} with \eqref{eq56}, and noting that $u$ is a weak solution of \eqref{eq11}, it follows
\begin{align}\label{eq57}
   & \int_{\Omega '} {{\nabla _H}{u_h}\cdot{\nabla _H}\varphi dx}  + \frac{{{c_{Q,s}}}}{2}\iint_{\mathbb{H}^n \times \mathbb{H}^n} {\frac{{\left( {{u_h}\left( x \right) - {u_h}\left( y \right)} \right)\left( {\varphi \left( x \right) - \varphi \left( y \right)} \right)}}{{\left\| {{y^{ - 1}} \circ x} \right\|_{{\mathbb{H}^n}}^{Q + 2s}}}dxdy} \nonumber\\
  = &   - \int_\Omega  {{\nabla _H}u\cdot{\nabla _H}{\psi _{ - h}}dx}  - \frac{{{c_{Q,s}}}}{2}\iint_{\mathbb{H}^n \times \mathbb{H}^n} {\frac{{\left( {u\left( x \right) - u\left( y \right)} \right)\left( {{\psi _{ - h}}\left( x \right) - {\psi _{ - h}}\left( y \right)} \right)}}{{\left\| {{y^{ - 1}} \circ x} \right\|_{{\mathbb{H}^n}}^{Q + 2s}}}dxdy}\nonumber\\
 = & - \int_{{{\Omega '}_\rho }} {f{\psi _{ - h}}dx} .
\end{align}

If $\left| h \right| < \rho $, then
\[{\rm{supp}}\left( \varphi  \right) \subseteq \Omega ' \subseteq {\Omega '_h} \cap \left( {{{\Omega '}_h} \circ {e^{ - hZ}}} \right){\rm{,}}\]
and so
\begin{align}\label{eq58}
   &  - \int_{{{\Omega '}_\rho }} {f{\psi _{ - h}}dx}  = \int_{{{\Omega '}_\rho }} {f\left( x \right)\left( {\frac{{\varphi \left( {x{e^{ - hZ}}} \right) - \varphi \left( x \right)}}{h}} \right)dx}\nonumber\\
  = & \frac{1}{h}\left( {\int_{{{\Omega '}_\rho } \circ {e^{ - hZ}}} {f\left( {x{e^{hZ}}} \right)\varphi \left( x \right)dx}  - \int_{{{\Omega '}_\rho }} {f\left( x \right)\varphi \left( x \right)dx} } \right) = \int_{\Omega '} {{f_h}\varphi dx}.
\end{align}
By combining \eqref{eq57} with \eqref{eq58}, we get
\[\int_{\Omega '} {{\nabla _H}{u_h}\cdot{\nabla _H}\varphi dx}  + \frac{{{c_{Q,s}}}}{2}\iint_{\mathbb{H}^n \times \mathbb{H}^n} {\frac{{\left( {{u_h}\left( x \right) - {u_h}\left( y \right)} \right)\left( {\varphi \left( x \right) - \varphi \left( y \right)} \right)}}{{\left\| {{y^{ - 1}} \circ x} \right\|_{{\mathbb{H}^n}}^{Q + 2s}}}dxdy}  = \int_{\Omega '} {{f_h}\varphi dx} \]
for any $\varphi  \in C_0^\infty \left( {\Omega ',\mathbb{R}} \right)$. Therefore, $u_h$ is a weak solution of \eqref{eq52}.

\begin{proposition}\label{Pro52}
Let $m \in \mathbb{N} \cup \left\{ 0 \right\},\;f \in H{W^{m,2}}\left( \Omega  \right)$. If $u \in H{W^{m + 1,2}}\left( {{\mathbb{H}^n}} \right)$ is a weak solution to \eqref{eq11}, then
$u \in HW_{\mathrm{loc}}^{m + 2,2}\left( \Omega  \right)$. Moreover, for any open set $V$ with $V \subseteq \bar V \subseteq \Omega \subseteq{\mathbb{H}^n}$, there exists a constant ${C_m} > 0$ independent $h$ such that
\begin{equation}\label{eq59}
{\left\| u \right\|_{H{W^{m + 2,2}}\left( V \right)}} \le {C_m}\left( {{{\left\| f \right\|}_{H{W^{m,2}}\left( \Omega  \right)}} + {{\left\| u \right\|}_{H{W^{m + 1,2}}\left( {{{\mathbb{H}}^n}} \right)}}} \right).
\end{equation}
\end{proposition}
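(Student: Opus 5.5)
We argue by induction on $m$. The base case $m=0$ is exactly Theorem \ref{Th12}. Covering $\bar V$ by finitely many balls $B(x_0,r/4)$ with $B(x_0,2r)\subset\Omega$ and summing \eqref{eq18} gives
\[
\|u\|_{HW^{2,2}(V)}\le C_0\big(\|f\|_{L^2(\Omega)}+\|u\|_{HW^{1,2}(\mathbb{H}^n)}\big).
\]
Now suppose the statement holds up to order $m-1$ for every pair $V\subset\subset\Omega$. Take $f\in HW^{m,2}(\Omega)$ and $u\in HW^{m+1,2}(\mathbb{H}^n)$, and choose nested open sets $V\subset\subset W\subset\subset\Omega'\subset\subset\Omega$. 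We never differentiate the equation directly. Instead, we use Lemma \ref{Le51}: for $|h|<\rho={\rm dist}(\Omega',\partial\Omega)$, the quotient $u_h=D_{X_i,h}u$ solves \eqref{eq52} in $\Omega'$ with right-hand side $f_h$.

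To apply the induction hypothesis to $u_h$ we need two facts.

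\emph{(a) Global regularity of $u_h$.} Since $u\in HW^{m+1,2}(\mathbb{H}^n)$, Lemma \ref{Le22} gives $u_h\in HW^{m,2}(\mathbb{H}^n)$ with $\|u_h\|_{HW^{m,2}(\mathbb{H}^n)}\le c\|u\|_{HW^{m+1,2}(\mathbb{H}^n)}$, uniformly in $h$. This uses the commutation formula \eqref{eq210} of Lemma \ref{Le21} to move $\nabla_H^k$ past $D_{X_i,h}$. Each commutator produces a term of the form $h\,T(\cdot)$ evaluated at a translated point. These are controlled by lower-order horizontal derivatives, because $T=[X_j,X_{n+j}]$ and Theorem \ref{Th11} gives local control of $T\nabla_H$.

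\emph{(b) Regularity of $f_h$.} Similarly, $\|f_h\|_{HW^{m-1,2}(\Omega')}\le c\|f\|_{HW^{m,2}(\Omega)}$.

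The induction hypothesis is stated for a solution in $\Omega$; we apply it with $\Omega'$ in place of $\Omega$, which is legitimate because Definition \ref{De11} is local in the test functions. This yields
\[
\|u_h\|_{HW^{m+1,2}(W)}\le C_{m-1}\big(\|f\|_{HW^{m,2}(\Omega)}+\|u\|_{HW^{m+1,2}(\mathbb{H}^n)}\big),
\]
uniformly in $h$. Hence $D_{X_i,h}\nabla_H^{m+1}u$ is bounded in $L^2(W)$ up to commutator terms. Letting $h\to0$ via Lemma \ref{Le22} shows $X_i\nabla_H^{m+1}u\in L^2(W)$ for each $i$, and the orders of differentiation can be rearranged modulo $T$-terms of lower horizontal order. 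This gives $u\in HW^{m+2,2}(V)$ together with \eqref{eq59}, with $C_m$ depending on $C_{m-1}$ and the chosen domains.

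\textbf{Main obstacle.} The difficulty is the non-commutativity in (a) and in the final limit. Writing $X_{j_1}\cdots X_{j_k}D_{X_i,h}u$ as $D_{X_i,h}X_{j_1}\cdots X_{j_k}u$ produces, at each step where $j_l=n+i$ (or $i=n+j_l$), a term $\pm h\,T X\cdots u$ at a shifted point. This term has to be bounded without the global regularity of $Tu$, which we only know locally.

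My first approach is to avoid global commutators altogether. The nonlocal part of Lemma \ref{Le51} only needs $u_h\in HW^{1,2}(\mathbb{H}^n)$, which holds uniformly, and the induction can be run in the local form of Theorems \ref{Th11}–\ref{Th12}, with $HW^{m,2}$ bounds needed only on $\Omega'$.

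On $\Omega'$, the commutator terms involve $T\nabla_H^{k-1}u$, and $T\nabla_H^{k-1}u=(X_jX_{n+j}-X_{n+j}X_j)\nabla_H^{k-1}u$ is a combination of $(k+1)$-th horizontal derivatives. So the $h$-factor terms are bounded by $|h|\,\|u\|_{HW^{m+1,2}}$ at worst in lower-order situations, and more precisely by quantities already controlled at the previous induction step. The nonlocal tail contributes only through $\|u\|_{HW^{1,2}(\mathbb{H}^n)}$, exactly as in estimate \eqref{eq412}, so no derivatives fall on the far-away part of $u$.
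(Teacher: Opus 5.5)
Your outline follows the paper's proof: apply Lemma \ref{Le51} to $u_h=D_{X_i,h}u$, apply the induction hypothesis to $u_h$, obtain bounds uniform in $h$, then pass to the limit with Lemma \ref{Le22}. However, the non-commutativity you flag as the main obstacle is a genuine gap, and your treatment of it is incorrect.

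First, the commutator terms are not $O(h)$. Dividing \eqref{eq210} by $h$ gives $YD_{Z,h}v=D_{Z,h}Yv+([Y,Z]v)(xe^{hZ})$, with no factor $h$. Moreover $T=[X_j,X_{n+j}]$ counts as two horizontal derivatives, so these terms are not of lower order. Iterating \eqref{eq29} at order $k\ge2$ produces terms $h^{l-1}(T^l\nabla_H^{k-l}u)(xe^{hX_i})$ with $l\ge1$, of horizontal order $k+l$. For instance, for $1\le i\le n$,
\[
X_{n+i}^2D_{X_i,h}u=D_{X_i,h}X_{n+i}^2u-2\,(TX_{n+i}u)(xe^{hX_i})+h\,(T^2u)(xe^{hX_i}).
\]
Since $T^2$ is homogeneous of degree $4$ under dilations, $\|T^2u\|_{L^2}$ cannot be bounded by $\|u\|_{HW^{3,2}(\mathbb{H}^n)}$. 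So your claim (a), the uniform bound $\|u_h\|_{HW^{m,2}(\mathbb{H}^n)}\le c\|u\|_{HW^{m+1,2}(\mathbb{H}^n)}$, already fails for $m=2$, and (b) has the same defect once $m\ge3$. Theorem \ref{Th11} only bounds $T\nabla_Hu$ locally, so it helps with neither a global norm nor $T^2u$. Your ``local form'' fallback is not an argument: the induction hypothesis needs $u_h\in HW^{m,2}(\mathbb{H}^n)$, and a localized statement would have to control the tail of $u_h$ at every stage, which you do not set up.

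More fundamentally, the same identity \eqref{eq29} shows that Lemma \ref{Le51} cannot be used as a black box for horizontal $Z$. The map $x\mapsto xe^{hX_i}$ is a \emph{right} translation, and neither $\Delta_{\mathbb{H}^n}$ nor the kernel is right-invariant. For $1\le i\le n$,
\[
\Delta_{\mathbb{H}^n}\big(v(\cdot\,e^{hX_i})\big)=\big((\Delta_{\mathbb{H}^n}-2hTX_{n+i}+h^2T^2)v\big)(\cdot\,e^{hX_i}),
\]
and $e^{hX_i}\circ(z',t)\circ e^{-hX_i}=(z',t+hz_{n+i})$, so the change of variables behind \eqref{eq56} alters the kernel. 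Concretely, $u=tx_{n+1}-x_1^3/6$ satisfies $\Delta_{\mathbb{H}^n}u=0$ while $\Delta_{\mathbb{H}^n}D_{X_1,h}u\equiv-2$, so \eqref{eq53} is false.

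Hence $u_h$ solves \eqref{eq52} only up to two further sources. One is the local term $2(TX_{n+i}u)(xe^{hX_i})-h(T^2u)(xe^{hX_i})$. The other is a nonlocal term with kernel $h^{-1}\big(\|e^{hX_i}ze^{-hX_i}\|_{\mathbb{H}^n}^{-Q-2s}-\|z\|_{\mathbb{H}^n}^{-Q-2s}\big)$. The local term involves three horizontal derivatives of $u$, so for $u\in HW^{m+1,2}$ it lies at best in $HW^{m-2,2}$. That is one derivative short of the $HW^{m-1,2}$ right-hand side required by the level-$(m-1)$ hypothesis.

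It is also inconsistent to invoke Lemma \ref{Le51} while acknowledging in (a) that $\nabla_H$ does not commute with $D_{X_i,h}$, since \eqref{eq53} is exactly that commutation; only the central direction $T$ escapes this. The paper's proof of Proposition \ref{Pro52} takes the same two steps without comment, so following it does not close the gap. A correct argument must either carry these error terms through the induction or use a different mechanism.
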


\textbf{Proof.} We proceed by induction on $m \in \mathbb{N} \cup \left\{ 0 \right\}$. Firstly, for the case of $m=0$(that is $f \in L^2(\Omega)$ and $u \in HW^{1,2}({\mathbb{H}^n})$) the result is given by Theorem \ref{Th12}. Then we assume that Proposition \ref{Pro52} holds for some $m>0$, and we prove that it still holds for $m+1$.

Let $V$ be a fixed open set with $V \subseteq \bar V \subseteq \Omega $, $\Omega '$ be an open subset of $\Omega$ with $\bar V \subseteq \Omega ' \subseteq \bar \Omega ' \subseteq \Omega$, and $f \in H{W^{m + 1,2}}\left( \Omega  \right)$, $u \in H{W^{m + 2,2}}\left( {{\mathbb{H}^n}} \right)$ be a weak solution of \eqref{eq11}. We set $\rho  = {\rm{dist}}\left( {\Omega ',\partial \Omega } \right) > 0$. If $\left| h \right| < \rho $, then $u_h$ is a weak solution of \eqref{eq52} from Lemma \ref{Le51}. Because of $u \in H{W^{m + 2,2}}\left( {{\mathbb{H}^n}} \right)$, we have $u_h \in H{W^{m + 1,2}}\left( {{\mathbb{H}^n}} \right)$. Then by using the inductive hypothesis to $u_h$, we obtain

(1) ${u_h} \in HW_{\mathrm{loc}}^{m + 2,2}\left( {\Omega '} \right)$. Particularly, ${u_h} \in H{W^{m + 2,2}}\left( V \right)$;

(2) there exists a constant ${C_m} > 0$ independent of $h$ such that
\[{\left\| {{u_h}} \right\|_{H{W^{m + 2,2}}\left( V \right)}} \le {C_m}\left( {{{\left\| {{f_h}} \right\|}_{H{W^{m,2}}\left( {\Omega '} \right)}} + {{\left\| {{u_h}} \right\|}_{H{W^{m + 1,2}}\left( {{{\rm{H}}^n}} \right)}}} \right)\]
for $\left| h \right| < \rho $.

Note that $f \in H{W^{m + 1,2}}\left( \Omega  \right)$ and $u \in H{W^{m + 2,2}}\left( {{\mathbb{H}^n}} \right)$, so it holds
\[{\left\| {{f_h}} \right\|_{H{W^{m,2}}\left( {\Omega '} \right)}} \le c{\left\| f \right\|_{H{W^{m + 1,2}}\left( {\Omega '} \right)}},\;\hbox{and}\;{\left\| {{u_h}} \right\|_{H{W^{m + 1,2}}\left( {{{\rm{H}}^n}} \right)}} \le c{\left\| u \right\|_{H{W^{m + 2,2}}\left( {{{\rm{H}}^n}} \right)}},\]
where $c$ is independent of $h$. Then we derive
\[{\left\| {{u_h}} \right\|_{H{W^{m + 2,2}}\left( V \right)}} \le c{C_m}\left( {{{\left\| f \right\|}_{H{W^{m + 1,2}}\left( {\Omega '} \right)}} + {{\left\| u \right\|}_{H{W^{m + 2,2}}\left( {{{\mathbb{H}}^n}} \right)}}} \right),\]
and this estimate is uniform with respect to $\left| h \right| < \rho $. Hence, $u \in H{W^{m + 3,2}}\left( V \right)$ and
\[{\left\| u \right\|_{H{W^{m + 3,2}}\left( V \right)}} \le c{C_m}\left( {{{\left\| f \right\|}_{H{W^{m + 1,2}}\left( {\Omega '} \right)}} + {{\left\| u \right\|}_{H{W^{m + 2,2}}\left( {{{\rm{H}}^n}} \right)}}} \right).\]
The proof is completed.

To remove the condition $u \in H{W^{m + 1,2}}\left( {{\mathbb{H}^n}} \right)$ in Proposition \ref{Pro52}, we need to conduct a truncation discussion. To do so, we first give the following two lemmas.

\begin{lemma}\label{Le53}
Let the open set $\Omega ' \subseteq {\mathbb{H}^n},\;\delta  > 0,\;\alpha  > Q,\;g \in {L^2}\left( {{\mathbb{H}^n}} \right)$ satisfy
\begin{equation}\label{eq510}
g\left( x \right) = 0\;\hbox{for}\;\hbox{a.e.}\;x \in {\Omega '_\delta },
\end{equation}
where ${\Omega '_\delta } = \left\{ {x \in {\mathbb{H}^n}:{\rm{dist}}\left( {x,\Omega '} \right) < \delta } \right\}$. Then, the following conclusions hold:

\rm{(1)} for any fixed $x \in {\Omega '_{{\raise0.5ex\hbox{$\scriptstyle \delta $}
\kern-0.1em/\kern-0.15em
\lower0.25ex\hbox{$\scriptstyle 2$}}}}$, we have
\begin{equation}\label{eq511}
 y \mapsto \frac{{g\left( y \right)}}{{\left\| {{y^{ - 1}} \circ x} \right\|_{{\mathbb{H}^n}}^\alpha }} \in {L^1}\left( {{\mathbb{H}^n}} \right);
\end{equation}

\rm{(2)} if we define the functional $ {G_\alpha }\left[ g \right]\left( x \right)$ as
\begin{equation}\label{eq512}
 {G_\alpha }\left[ g \right]\left( x \right) = \int_{{{\mathbb{H}}^n}} {\frac{{g\left( y \right)}}{{\left\| {{y^{ - 1}} \circ x} \right\|_{{\mathbb{H}^n}}^\alpha }}dy} ,
\end{equation}
then ${G_\alpha }\left[ g \right] \in {C^\infty }\left( {{{\Omega '}_{{\raise0.5ex\hbox{$\scriptstyle \delta $}
\kern-0.1em/\kern-0.15em
\lower0.25ex\hbox{$\scriptstyle 2$}}}}} \right).$
\end{lemma}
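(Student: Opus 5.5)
The key fact is that $g$ vanishes on the $\delta$-neighbourhood of $\Omega'$. So for $x\in\Omega'_{\delta/2}$ the kernel is evaluated only at points $y$ with $\mathrm{dist}(y,x)\ge\delta/2$, where it is smooth and bounded. I will first get a uniform geometric separation, then prove integrability by Cauchy--Schwarz, and finally differentiate under the integral sign.

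\textbf{Separation.} If $x\in\Omega'_{\delta/2}$ and $y\notin\Omega'_\delta$, the triangle inequality for the distance gives $d(x,y)\ge\delta/2$. By the global equivalence of the C-C metric with the Kor\'anyi gauge, there is $c_0=c_0(Q)>0$ such that $\|y^{-1}\circ x\|_{\mathbb{H}^n}\ge c_0\delta$ for a.e. $y$ with $g(y)\neq0$. The same bound, with $c_0\delta/2$, holds for $x$ in a small ball $B(x_1,\kappa\delta)\subset\Omega'_{\delta/2}$ around any fixed $x_1\in\Omega'_{\delta/2}$. I will use this uniform version in the differentiation step.

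\textbf{Part (1).} By Cauchy--Schwarz,
\[
\int_{\mathbb{H}^n}\frac{|g(y)|}{\|y^{-1}\circ x\|^\alpha_{\mathbb{H}^n}}dy\le \|g\|_{L^2(\mathbb{H}^n)}\left(\int_{\{\|y^{-1}\circ x\|_{\mathbb{H}^n}\ge c_0\delta\}}\frac{dy}{\|y^{-1}\circ x\|_{\mathbb{H}^n}^{2\alpha}}\right)^{1/2}.
\]
I change variables $z=x^{-1}\circ y$, which preserves Lebesgue (Haar) measure, and use polar coordinates for the homogeneous norm. The last integral becomes $c\int_{c_0\delta}^\infty \rho^{Q-1-2\alpha}d\rho$, which is finite since $2\alpha>Q$; in fact $\alpha>Q/2$ would suffice. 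This proves (1), and also shows that $G_\alpha[g]$ is well defined.

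\textbf{Part (2).} Set $K(x,y)=\|y^{-1}\circ x\|_{\mathbb{H}^n}^{-\alpha}$. Away from the diagonal it is smooth in $x$, because the gauge $\|\cdot\|_{\mathbb{H}^n}$ is smooth on $\mathbb{H}^n\setminus\{0\}$. I will use the Euclidean derivatives $\partial^\beta_x$ in the coordinates of $\mathbb{R}^{2n+1}$. Each $\partial_{x_j}$ and $\partial_t$ is a combination of $X_i$ and $T$ with polynomial coefficients, so smoothness in either sense is equivalent.

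The main obstacle is a pointwise bound on $\partial^\beta_x K(x,y)$ that is integrable against $|g(y)|$ uniformly for $x$ near $x_1$. Naively, Euclidean derivatives of $y^{-1}\circ x$ grow like $|y|$ because of the term $\frac12\sum_i(x_iy_{n+i}-x_{n+i}y_i)$. I will handle this with left invariance instead of direct differentiation.

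Write $y^{-1}\circ x=w$. Moving $x$ by a left-invariant field corresponds to right translation of $w$, so each $X_i$ or $T$ acting in $x$ becomes the same left-invariant field acting on $N(w)=\|w\|_{\mathbb{H}^n}^{-\alpha}$. Since $N$ is homogeneous of degree $-\alpha$, and $X_i$, $T$ have homogeneous degrees $1$ and $2$, a monomial in $X$ and $T$ of total homogeneous degree $k$ applied to $N$ is bounded by $C\|w\|_{\mathbb{H}^n}^{-\alpha-k}\le C\|w\|_{\mathbb{H}^n}^{-\alpha}$ once $\|w\|_{\mathbb{H}^n}\ge c_0\delta/2$. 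Euclidean derivatives in $x$ are combinations of these monomials with coefficients polynomial in $x$, hence bounded for $x$ in the small ball.

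This yields an $L^1$ majorant $C_\beta\,|g(y)|\,\|y^{-1}\circ x\|_{\mathbb{H}^n}^{-\alpha}$. I will bound it uniformly in $x\in B(x_1,\kappa\delta)$ by $C|g(y)|(1+|y|_{\mathrm{eucl}})^{-\alpha/2}$; the exponent $\alpha/2$ comes from the quasi-triangle inequality and $\|y\|_{\mathbb{H}^n}\gtrsim|y|^{1/2}$ for large $y$. By the same Cauchy--Schwarz computation, this majorant is integrable when $\alpha>Q$. Dominated convergence then lets me differentiate under the integral sign to every order, so $G_\alpha[g]\in C^\infty(\Omega'_{\delta/2})$.
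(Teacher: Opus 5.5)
Your proof is correct and follows the same outline as the paper's: the separation $\|y^{-1}\circ x\|_{\mathbb{H}^n}\gtrsim\delta$ on the support of $g$, Cauchy--Schwarz for (1), and differentiation under the integral sign with a locally uniform $L^1$ majorant for (2). Where you differ, your version is more careful. The paper simply asserts \eqref{eq516} for Euclidean derivatives $\partial_x^\gamma$ with a constant $c(\alpha)$. You instead notice that these derivatives carry coefficients depending polynomially on $x$, and you justify the bound through left invariance and the homogeneity of $\|\cdot\|_{\mathbb{H}^n}^{-\alpha}$. Your Cauchy--Schwarz split in (1) is also sharper, requiring only $\alpha>Q/2$.
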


\textbf{Proof.} We first prove the conclusion (1). Let $x \in {\Omega '_{{\raise0.5ex\hbox{$\scriptstyle \delta $}
\kern-0.1em/\kern-0.15em
\lower0.25ex\hbox{$\scriptstyle 2$}}}}$, then it yields from \eqref{eq510} and H\"{o}lder's inequality that
\begin{align}\label{eq513}
   & \int_{{\mathbb{H}^n}} {\frac{{\left| {g\left( y \right)} \right|}}{{\left\| {{y^{ - 1}} \circ x} \right\|_{{\mathbb{H}^n}}^\alpha }}dy}  = \int_{{\mathbb{H}^n}\backslash {{\Omega '}_\delta }} {\frac{{\left| {g\left( y \right)} \right|}}{{\left\| {{y^{ - 1}} \circ x} \right\|_{{\mathbb{H}^n}}^\alpha }}dy}\nonumber  \\
 \le  &  {\left( {\int_{{\mathbb{H}^n}\backslash {{\Omega '}_\delta }} {\frac{{{{\left| {g\left( y \right)} \right|}^2}}}{{\left\| {{y^{ - 1}} \circ x} \right\|_{{\mathbb{H}^n}}^\alpha }}dy} } \right)^{{\raise0.5ex\hbox{$\scriptstyle 1$}
\kern-0.1em/\kern-0.15em
\lower0.25ex\hbox{$\scriptstyle 2$}}}}{\left( {\int_{{\mathbb{H}^n}\backslash {{\Omega '}_\delta }} {\frac{1}{{\left\| {{y^{ - 1}} \circ x} \right\|_{{\mathbb{H}^n}}^\alpha }}dy} } \right)^{{\raise0.5ex\hbox{$\scriptstyle 1$}
\kern-0.1em/\kern-0.15em
\lower0.25ex\hbox{$\scriptstyle 2$}}}}.
\end{align}
Again because $x \in {\Omega '_{{\raise0.5ex\hbox{$\scriptstyle \delta $}
\kern-0.1em/\kern-0.15em
\lower0.25ex\hbox{$\scriptstyle 2$}}}}$, one has
\begin{equation}\label{eq514}
{\left\| {{y^{ - 1}} \circ x} \right\|_{{\mathbb{H}^n}}} \ge \frac{\delta }{2}
\end{equation}
for any $y \in {\mathbb{H}^n}\backslash {\Omega '_\delta }$. Then, by combining \eqref{eq513} with \eqref{eq514} and applying $\alpha> Q,\;g \in {L^2}\left( {{{\mathbb{H}}^n}} \right)$,
it deduces
\begin{align*}
  \int_{{\mathbb{H}^n}} {\frac{{\left| {g\left( y \right)} \right|}}{{\left\| {{y^{ - 1}} \circ x} \right\|_{{\mathbb{H}^n}}^\alpha }}dy}  \le & {\left( {\frac{2}{\delta }} \right)^{{\raise0.5ex\hbox{$\scriptstyle \alpha $}
\kern-0.1em/\kern-0.15em
\lower0.25ex\hbox{$\scriptstyle 2$}}}}{\left( {\int_{{\mathbb{H}^n}\backslash {{\Omega '}_\delta }} {{{\left| {g\left( y \right)} \right|}^2}dy} } \right)^{{\raise0.5ex\hbox{$\scriptstyle 1$}
\kern-0.1em/\kern-0.15em
\lower0.25ex\hbox{$\scriptstyle 2$}}}}{\left( {\int_{\left\{ {{{\left\| {{y^{ - 1}} \circ x} \right\|}_{{\mathbb{H}^n}}} \ge \frac{\delta }{2}} \right\}} {\frac{1}{{\left\| {{y^{ - 1}} \circ x} \right\|_{{\mathbb{H}^n}}^\alpha }}dy} } \right)^{{\raise0.5ex\hbox{$\scriptstyle 1$}
\kern-0.1em/\kern-0.15em
\lower0.25ex\hbox{$\scriptstyle 2$}}}}\\
\le  &  {\left( {\frac{2}{\delta }} \right)^{{\raise0.5ex\hbox{$\scriptstyle \alpha $}
\kern-0.1em/\kern-0.15em
\lower0.25ex\hbox{$\scriptstyle 2$}}}}{\left( {\int_{\left\{ {{{\left\| z \right\|}_{{\mathbb{H}^n}}} \ge \frac{\delta }{2}} \right\}} {\frac{1}{{\left\| z \right\|_{{\mathbb{H}^n}}^\alpha }}dy} } \right)^{{\raise0.5ex\hbox{$\scriptstyle 1$}
\kern-0.1em/\kern-0.15em
\lower0.25ex\hbox{$\scriptstyle 2$}}}}{\left\| g \right\|_{{L^2}\left( {{\mathbb{H}^n}} \right)}} < \infty .
\end{align*}
Hence, \eqref{eq511} is proved.

Next, we prove the conclusion (2), i.e., we need to prove that for every fixed $x \in {\Omega '_{{\raise0.5ex\hbox{$\scriptstyle \delta $}
\kern-0.1em/\kern-0.15em
\lower0.25ex\hbox{$\scriptstyle 2$}}}}$ and every $2n+1$-tuple $\gamma  = \left( {{\gamma _1},{\gamma _2}, \cdots ,{\gamma _{2n + 1}}} \right) \in {\left( { \mathbb{N}\cup \left\{ 0 \right\}} \right)^{2n + 1}}$, it holds
\begin{equation}\label{eq515}
\partial _x^\gamma {G_\alpha }\left[ g \right]\left( x \right) = \int_{{\mathbb{H} ^n}\backslash {{\Omega '}_\delta }} {\partial _x^\gamma \left( {\frac{{g\left( y \right)}}{{\left\| {{y^{ - 1}} \circ x} \right\|_{{\mathbb{H} ^n}}^\alpha }}} \right)dy} .
\end{equation}

Note that for any $x \ne y \in {\mathbb{H}^n}$, there holds
\begin{equation}\label{eq516}
\partial _x^\gamma \left( {\frac{{g\left( y \right)}}{{\left\| {{y^{ - 1}} \circ x} \right\|_{{\mathbb{H}^n}}^\alpha }}} \right) \le c\left( \alpha  \right)\frac{{\left| {g\left( y \right)} \right|}}{{\left\| {{y^{ - 1}} \circ x} \right\|_{{\mathbb{H}^n}}^{\alpha  + \left| \gamma  \right|}}}.
\end{equation}
Then, from the conclusion (1), it infers that for every fixed $x \in {\Omega '_{{\raise0.5ex\hbox{$\scriptstyle \delta $}
\kern-0.1em/\kern-0.15em
\lower0.25ex\hbox{$\scriptstyle 2$}}}}$, 
\begin{equation}\label{eq517}
y \mapsto \partial _x^\gamma \left( {\frac{{g\left( y \right)}}{{\left\| {{y^{ - 1}} \circ x} \right\|_{{\mathbb{H}^n}}^\alpha }}} \right) \in {L^1}\left( {{\mathbb{H}^n}} \right).
\end{equation}
Next, we prove the claim that: for every ${x_0} \in {\Omega '_{{\raise0.5ex\hbox{$\scriptstyle \delta $}
\kern-0.1em/\kern-0.15em
\lower0.25ex\hbox{$\scriptstyle 2$}}}},\;\gamma  = \left( {{\gamma _1},{\gamma _2}, \cdots ,{\gamma _{2n + 1}}} \right) \in {\left( { \mathbb{N}\cup \left\{ 0 \right\}} \right)^{2n + 1}}$, there exist $r > 0$ and ${\Theta _{\gamma ,{x_0},r}}\left( y \right) \in {L^1}\left( {{\mathbb{H}^n}} \right)$ such that
$$\partial _x^\gamma \left( {\frac{{g\left( y \right)}}{{\left\| {{y^{ - 1}} \circ x} \right\|_{{\mathbb{H}^n}}^\alpha }}} \right) \le  {\Theta _{\gamma ,{x_0},r}}\left( y \right).$$

In fact, assume ${x_0} \in {\Omega '_{{\raise0.5ex\hbox{$\scriptstyle \delta $}
\kern-0.1em/\kern-0.15em
\lower0.25ex\hbox{$\scriptstyle 2$}}}},\;\gamma  = \left( {{\gamma _1},{\gamma _2}, \cdots ,{\gamma _{2n + 1}}} \right) \in {\left( { \mathbb{N}\cup \left\{ 0 \right\}} \right)^{2n + 1}}$ and take $r > 0$ such that $B\left( {{x_0},r} \right) \subseteq {\Omega '_{{\raise0.5ex\hbox{$\scriptstyle \delta $}
\kern-0.1em/\kern-0.15em
\lower0.25ex\hbox{$\scriptstyle 2$}}}}$. On the one hand, for any $x \in B\left( {{x_0},r} \right),\;y \in \left( {{\mathbb{H}^n}\backslash {{\Omega '}_\delta }} \right) \cap B\left( {{x_0},2r} \right)$, we use \eqref{eq514} to obtain
\begin{equation}\label{eq518}
\frac{{{{\left\| {{y^{ - 1}} \circ x} \right\|}_{{\mathbb{H}^n}}}}}{{{{\left\| {{y^{ - 1}} \circ {x_0}} \right\|}_{{\mathbb{H}^n}}}}} \ge \frac{{\frac{\delta }{2}}}{{2r}} = \frac{\delta }{{4r}}.
\end{equation}
On the other hand, for any $x \in B\left( {{x_0},r} \right),\;y \in \left( {{\mathbb{H}^n}\backslash {{\Omega '}_\delta }} \right)\backslash B\left( {{x_0},2r} \right)$, it follows
\[{\left\| {x_0^{ - 1} \circ x} \right\|_{{\mathbb{H}^n}}} < r < \frac{1}{2}{\left\| {{y^{ - 1}} \circ {x_0}} \right\|_{{\mathbb{H}^n}}},\]
so it yields by using the triangle inequality that
\begin{equation}\label{eq519}
\frac{{{{\left\| {{y^{ - 1}} \circ x} \right\|}_{{\mathbb{H} ^n}}}}}{{{{\left\| {{y^{ - 1}} \circ {x_0}} \right\|}_{{\mathbb{H} ^n}}}}} \ge \frac{{{{\left\| {{y^{ - 1}} \circ {x_0}} \right\|}_{{\mathbb{H} ^n}}} - {{\left\| {x_0^{ - 1} \circ x} \right\|}_{{\mathbb{H} ^n}}}}}{{{{\left\| {{y^{ - 1}} \circ {x_0}} \right\|}_{{\mathbb{H} ^n}}}}} = 1 - \frac{{{{\left\| {x_0^{ - 1} \circ x} \right\|}_{{\mathbb{H} ^n}}}}}{{{{\left\| {{y^{ - 1}} \circ {x_0}} \right\|}_{{\mathbb{H} ^n}}}}} \ge \frac{1}{2}.
\end{equation}
Then, by combining \eqref{eq518} with \eqref{eq519}, one has
\begin{equation}\label{eq520}
 \frac{{{{\left\| {{y^{ - 1}} \circ x} \right\|}_{{\mathbb{H}^n}}}}}{{{{\left\| {{y^{ - 1}} \circ {x_0}} \right\|}_{{\mathbb{H}^n}}}}} \ge \min \left\{ {\frac{\delta }{{4r}},\frac{1}{2}} \right\}: = \kappa
\end{equation}
for any $x \in B\left( {{x_0},r} \right),\;y \in {\mathbb{H}^n}\backslash {\Omega '_\delta }$.

Now, by combining \eqref{eq516} with \eqref{eq520}, it follows
\begin{equation}\label{eq521}
\partial _x^\gamma \left( {\frac{{g\left( y \right)}}{{\left\| {{y^{ - 1}} \circ x} \right\|_{{\mathbb{H}^n}}^\alpha }}} \right) \le \frac{{c\left( \alpha  \right)}}{{{\kappa ^{\alpha  + \left| \gamma  \right|}}}}\frac{{\left| {g\left( y \right)} \right|}}{{\left\| {{y^{ - 1}} \circ {x_0}} \right\|_{{\mathbb{H}^n}}^{\alpha  + \left| \gamma  \right|}}}: = {\Theta _{\gamma ,{x_0},r}}\left( y \right)
\end{equation}
for any $x \in B\left( {{x_0},r} \right),\;y \in {\mathbb{H}^n}\backslash {\Omega '_\delta }$. Also, because of ${x_0} \in {\Omega '_{{\raise0.5ex\hbox{$\scriptstyle \delta $}
\kern-0.1em/\kern-0.15em
\lower0.25ex\hbox{$\scriptstyle 2$}}}}$, we have ${\Theta _{\gamma ,{x_0},r}}\left( y \right) \in {L^1}\left( {{\mathbb{H}^n}} \right)$ from the conclusion (1). Therefore, \eqref{eq515} is established, and the conclusion (2) is confirmed.

\begin{lemma}\label{Le54}
Let $u \in H{W^{1,2}}\left( {{\mathbb{H}^n}} \right)$ be a weak solution to \eqref{eq11} and the open set $\Omega '$ with $\bar \Omega ' \subseteq \Omega \subseteq{\mathbb{H}^n}$. If for $\rho  = {\rm{dist}}\left( {\Omega ',\partial \Omega } \right),\;\xi  \in C_0^\infty \left( {{\mathbb{H}^n},\mathbb{R}} \right)$ satisfies

\rm{(1)} $\xi  \equiv 1$ on ${\Omega '_{{\raise0.5ex\hbox{$\scriptstyle \rho $}
\kern-0.1em/\kern-0.15em
\lower0.25ex\hbox{$\scriptstyle 4$}}}}$;

\rm{(2)} ${\rm{supp}}\xi  \subseteq {\Omega '_{{\raise0.5ex\hbox{$\scriptstyle \rho $}
\kern-0.1em/\kern-0.15em
\lower0.25ex\hbox{$\scriptstyle 2$}}}}$;

\rm{(3)} $0 \le \xi  \le 1$ on $\mathbb{H}^n$,\\
then there exists $\psi  \in {C^\infty }\left( {\bar \Omega '}, \mathbb{R}\right)$ such that $\upsilon : = u\xi $ is a weak solution to the equation
\begin{equation}\label{eq522}
 - \Delta_{{\mathbb{H}^n}} \upsilon \left( x \right) + {c_{Q,s}}P.V.\int_{{\mathbb{H}^n}} {\frac{{\upsilon \left( x \right) - \upsilon \left( y \right)}}{{\left\| {{y^{ - 1}} \circ x} \right\|_{{\mathbb{H}^n}}^{Q + 2s}}}dy}  = f\left( x \right) + \psi \left( x \right),\;x \in \Omega '.
\end{equation}
\end{lemma}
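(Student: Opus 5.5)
The plan is to test the weak formulation \eqref{eq15} for $u$ with an arbitrary $\varphi\in C_0^\infty(\Omega')$, and then split $u=\upsilon+(1-\xi)u$. Since $\xi\equiv1$ on $\Omega'_{\rho/4}\supseteq\Omega'\supseteq{\rm supp}\,\varphi$, we have $\nabla_H u=\nabla_H\upsilon$ on ${\rm supp}\,\varphi$. Hence the local term satisfies
\[
\int_{\mathbb{H}^n}\nabla_H u\cdot\nabla_H\varphi\,dx=\int_{\mathbb{H}^n}\nabla_H\upsilon\cdot\nabla_H\varphi\,dx .
\]
Put $g:=(1-\xi)u\in L^2(\mathbb{H}^n)$. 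Then $g=0$ a.e. on $\Omega'_{\rho/4}$, so by bilinearity the nonlocal form for $u$ equals the form for $\upsilon$ plus the form for $g$.

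Next I compute the $g$-form against $\varphi$. Using symmetry of the kernel in $(x,y)$, write it as $2\iint g(x)-g(y)$ times $\varphi(x)$ over the kernel. The key observation is that where $\varphi(x)\neq0$ we have $x\in\Omega'$, so $g(x)=0$. Therefore
\[
\frac{c_{Q,s}}{2}\iint\frac{(g(x)-g(y))(\varphi(x)-\varphi(y))}{\|y^{-1}\circ x\|^{Q+2s}}dxdy=-c_{Q,s}\int_{\Omega'}\varphi(x)\,G_{Q+2s}[g](x)\,dx .
\]
Here $G_\alpha$ is as in Lemma \ref{Le53} with $\alpha=Q+2s>Q$ and $\delta=\rho/4$. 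To justify this rigorously I would avoid the principal value altogether. The integrand is absolutely integrable because $\|y^{-1}\circ x\|\ge \rho/4$ whenever $x\in{\rm supp}\,\varphi$ and $g(y)\ne0$, and Lemma \ref{Le53}(1) gives the $L^1$ bound in $y$ uniformly for $x\in\Omega'$. So I expand the product and use Fubini term by term. The only subtle point is that the separate pieces of the full $u$-form need not converge individually, but the $g$-form alone is absolutely convergent, so the splitting by linearity is legitimate.

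Setting $\psi:=c_{Q,s}G_{Q+2s}[(1-\xi)u]$, Lemma \ref{Le53}(2) gives $\psi\in C^\infty(\Omega'_{\rho/8})$, and in particular $\psi\in C^\infty(\bar\Omega')$. Then \eqref{eq15} becomes
\[
\int\nabla_H\upsilon\cdot\nabla_H\varphi+\frac{c_{Q,s}}{2}\iint\frac{(\upsilon(x)-\upsilon(y))(\varphi(x)-\varphi(y))}{\|y^{-1}\circ x\|^{Q+2s}}=\int_{\Omega'}(f+\psi)\varphi .
\]
Finally, $\upsilon=u\xi\in HW^{1,2}(\mathbb{H}^n)$ because $\xi$ is smooth with compact support, so extending from $C_0^\infty(\Omega')$ to $HW^{1,2}_0(\Omega')$ by density completes the proof. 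I expect the main obstacle to be the sign and bookkeeping in the Fubini step, that is, ensuring the tail term lands as $+\psi$ on the right-hand side with the correct factor of $2$. Its integrability itself is already handled by Lemma \ref{Le53}.
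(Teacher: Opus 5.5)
Your proposal is correct and follows the paper's proof essentially step for step. Like the paper, you split $u=\upsilon+\omega$ with $\omega=(1-\xi)u$ vanishing on $\Omega'_{\rho/4}$, use Lemma \ref{Le53} and Fubini to rewrite the $\omega$-part of the nonlocal form as $-c_{Q,s}\int_{\Omega'}G_{Q+2s}[\omega]\varphi\,dx$, and take $\psi=c_{Q,s}G_{Q+2s}[\omega]\in C^\infty(\Omega'_{\rho/8})\subseteq C^\infty(\bar\Omega')$. Your sign bookkeeping is actually cleaner than the paper's: the paper writes $+2\int_{\Omega'}G_{Q+2s}[\omega]\varphi\,dx$ in its Fubini step and then identifies $f-c_{Q,s}G_{Q+2s}[\omega]$ with $f+\psi$, whereas you carry the sign correctly throughout. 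Your final density step from $C_0^\infty(\Omega')$ to $HW^{1,2}_0(\Omega')$ is also one the paper leaves implicit.
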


\textbf{Proof.} Note $\upsilon  \in H{W^{1,2}}\left( {{\mathbb{H}^n}} \right)$ from $u \in H{W^{1,2}}\left( {{\mathbb{H}^n}} \right)\;\hbox{and}\;\xi  \in C_0^\infty \left( {{\mathbb{H}^n},\mathbb{R}} \right)$. If we write $\omega  = u\left( {1 - \xi } \right)$, then $\omega  = u - u\xi  = u - \upsilon $ and $\omega ,\left| \omega  \right| \in H{W^{1,2}}\left( {{\mathbb{H}^n}} \right)$. Also, because of $\xi  \equiv 1$ on ${\Omega '_{{\raise0.5ex\hbox{$\scriptstyle \rho $}
\kern-0.1em/\kern-0.15em
\lower0.25ex\hbox{$\scriptstyle 4$}}}}$, it hold
\begin{equation}\label{eq523}
 \omega  = u\left( {1 - \xi } \right) = 0\;\hbox{on}\;{\Omega '_{{\raise0.5ex\hbox{$\scriptstyle \rho $}
\kern-0.1em/\kern-0.15em
\lower0.25ex\hbox{$\scriptstyle 4$}}}}
\end{equation}
and
\begin{equation}\label{eq524}
 u \equiv \upsilon \;\hbox{on}\;\Omega '.
\end{equation}
Then, according to the definition of weak solutions and \eqref{eq524}, it deduces for any $\varphi  \in C_0^\infty \left( {\Omega ',\mathbb{R}} \right)$ 
\begin{align}\label{eq525}
   \int_{\Omega '} {f\varphi dx}  =& \int_{\Omega '} {{\nabla _H}u \cdot {\nabla _H}\phi dx}  + \frac{{{c_{Q,s}}}}{2}\iint_{\mathbb{H}^n \times \mathbb{H}^n} {\frac{{\left( {u\left( x \right) - u\left( y \right)} \right)\left( {\varphi \left( x \right) - \varphi \left( y \right)} \right)}}{{\left\| {{y^{ - 1}} \circ x} \right\|_{{\mathbb{H}^n}}^{Q + 2s}}}dxdy}  \nonumber\\
  = & \int_{\Omega '} {{\nabla _H}\upsilon  \cdot {\nabla _H}\phi dx}  + \frac{{{c_{Q,s}}}}{2}\iint_{\mathbb{H}^n \times \mathbb{H}^n} {\frac{{\left( {\upsilon \left( x \right) - \upsilon \left( y \right)} \right)\left( {\varphi \left( x \right) - \varphi \left( y \right)} \right)}}{{\left\| {{y^{ - 1}} \circ x} \right\|_{{\mathbb{H}^n}}^{Q + 2s}}}dxdy}  \nonumber\\
  & + \frac{{{c_{Q,s}}}}{2}\iint_{\mathbb{H}^n \times \mathbb{H}^n} {\frac{{\left( {\omega \left( x \right) - \omega \left( y \right)} \right)\left( {\varphi \left( x \right) - \varphi \left( y \right)} \right)}}{{\left\| {{y^{ - 1}} \circ x} \right\|_{{\mathbb{H}^n}}^{Q + 2s}}}dxdy} .
\end{align}

Moreover, the conditions of Lemma \ref{Le53} are met with $\alpha : = Q + 2s > Q,\;\delta  = \frac{\rho }{4},\;g: = \omega $ or $g: = \left| \omega  \right|$ from \eqref{eq523}, and so
\begin{equation}\label{eq526}
  {G_{Q + 2s}}\left[ \omega  \right]\left( x \right) = \int_{{\mathbb{H}^n}} {\frac{{\omega \left( y \right)}}{{\left\| {{y^{ - 1}} \circ x} \right\|_{{\mathbb{H}^n}}^\alpha }}dy}  \in {C^\infty }\left( {{{\Omega '}_{{\raise0.5ex\hbox{$\scriptstyle \rho $}
\kern-0.1em/\kern-0.15em
\lower0.25ex\hbox{$\scriptstyle 8$}}}}} \right)
\end{equation}
and
\begin{equation}\label{eq527}
 {G_{Q + 2s}}\left[ {\left| \omega  \right|} \right]\left( x \right) = \int_{{\mathbb{H}^n}} {\frac{{\left| {\omega \left( y \right)} \right|}}{{\left\| {{y^{ - 1}} \circ x} \right\|_{{\mathbb{H}^n}}^\alpha }}dy}  \in {C^\infty }\left( {{{\Omega '}_{{\raise0.5ex\hbox{$\scriptstyle \rho $}
\kern-0.1em/\kern-0.15em
\lower0.25ex\hbox{$\scriptstyle 8$}}}}} \right).
\end{equation}
Then, for any $\varphi  \in C_0^\infty \left( {\Omega ',\mathbb{R}} \right)$, we apply \eqref{eq523} and \eqref{eq527} to get
\begin{align*}
   & \iint_{\mathbb{H}^n \times \mathbb{H}^n} {\frac{{\left| {\omega \left( x \right) - \omega \left( y \right)} \right|\left| {\varphi \left( x \right) - \varphi \left( y \right)} \right|}}{{\left\| {{y^{ - 1}} \circ x} \right\|_{{\mathbb{H}^n}}^{Q + 2s}}}dxdy}  \\
\le   &  2\iint_{\mathbb{H}^n \times \mathbb{H}^n} {\frac{{\left| {\omega \left( x \right) - \omega \left( y \right)} \right|}}{{\left\| {{y^{ - 1}} \circ x} \right\|_{{\mathbb{H}^n}}^{Q + 2s}}}\left| {\varphi \left( x \right)} \right|dxdy} \\
 = &2\int_{{\mathbb{H}^n}} {\left( {\int_{{\mathbb{H}^n}} {\frac{{\left| {\omega \left( x \right) - \omega \left( y \right)} \right|}}{{\left\| {{y^{ - 1}} \circ x} \right\|_{{\mathbb{H}^n}}^{Q + 2s}}}dy} } \right)\left| {\varphi \left( x \right)} \right|dx} \\
  = &2\int_{\Omega '} {\left( {\int_{{\mathbb{H}^n}} {\frac{{\left| {\omega \left( y \right)} \right|}}{{\left\| {{y^{ - 1}} \circ x} \right\|_{{\mathbb{H}^n}}^{Q + 2s}}}dy} } \right)\left| {\varphi \left( x \right)} \right|dx} \\
   = &2\int_{\Omega '} {{G_{Q + 2s}}\left[ {\left| \omega  \right|} \right]\left( x \right)\left| {\varphi \left( x \right)} \right|dx} \\
    \le& 2\left| {\Omega '} \right|\mathop {\max }\limits_{x \in \bar \Omega '} \left( {{G_{Q + 2s}}\left[ {\left| \omega  \right|} \right]\left( x \right)} \right)\mathop {\max }\limits_{x \in {\mathbb{H}^n}} \left| {\varphi \left( x \right)} \right| < \infty ,
\end{align*}
so it gives by using Fubini's theorem that
\begin{align*}
   & \iint_{\mathbb{H}^n \times \mathbb{H}^n} {\frac{{\left( {\omega \left( x \right) - \omega \left( y \right)} \right)\left( {\varphi \left( x \right) - \varphi \left( y \right)} \right)}}{{\left\| {{y^{ - 1}} \circ x} \right\|_{{\mathbb{H}^n}}^{Q + 2s}}}dxdy}  \\
  = &  2\iint_{\mathbb{H}^n \times \mathbb{H}^n} {\frac{{\left( {\omega \left( x \right) - \omega \left( y \right)} \right)}}{{\left\| {{y^{ - 1}} \circ x} \right\|_{{\mathbb{H}^n}}^{Q + 2s}}}\varphi \left( x \right)dxdy} \\
   = &2\int_{\Omega '} {{G_{Q + 2s}}\left[ \omega  \right]\left( x \right)\varphi \left( x \right)dx} .
\end{align*}
Substituting the above equation into \eqref{eq525} yields
\begin{align}\label{eq528}
   \int_{\Omega '} {f\varphi dx}  =& \int_{\Omega '} {{\nabla _H}\upsilon  \cdot {\nabla _H}\phi dx}  + \frac{{{c_{Q,s}}}}{2}\iint_{{\mathbb{H} ^n} \times {\mathbb{H} ^n}} {\frac{{\left( {\upsilon \left( x \right) - \upsilon \left( y \right)} \right)\left( {\varphi \left( x \right) - \varphi \left( y \right)} \right)}}{{\left\| {{y^{ - 1}} \circ x} \right\|_{{\mathbb{H} ^n}}^{Q + 2s}}}dxdy} \nonumber \\
   &  + {c_{Q,s}}\int_{\Omega '} {{G_{Q + 2s}}\left[ \omega  \right]\left( x \right)\varphi \left( x \right)dx} .
\end{align}
Hence it gains
\begin{align*}
   & \int_{\Omega '} {{\nabla _H}\upsilon  \cdot {\nabla _H}\phi dx}  + \frac{{{c_{Q,s}}}}{2}\iint_{\mathbb{H}^n \times \mathbb{H}^n} {\frac{{\left( {\upsilon \left( x \right) - \upsilon \left( y \right)} \right)\left( {\varphi \left( x \right) - \varphi \left( y \right)} \right)}}{{\left\| {{y^{ - 1}} \circ x} \right\|_{{\mathbb{H}^n}}^{Q + 2s}}}dxdy}  \\
 =  & \int_{\Omega '} {\left( {f - {c_{Q,s}}{G_{Q + 2s}}\left[ \omega  \right]} \right)\varphi dx}
\end{align*}
for any $\varphi  \in C_0^\infty \left( {\Omega ',\mathbb{R}} \right)$, which $\upsilon $ is a weak solution of \eqref{eq522} with $\psi : = {c_{Q,s}}{G_{Q + 2s}}\left[ \omega  \right]$, and $\psi  \in {C^\infty }\left( {\bar \Omega ',\mathbb{R}} \right)$ from \eqref{eq526}.

Now let us prove Theorem \ref{Th13} by using Proposition \ref{Pro52} and Lemma \ref{Le54}.

\textbf{Proof of Theorem \ref{Th13}.} Similar to the proof of Proposition \ref{Pro52}, we proceed by induction on $m \in  \mathbb{N}\cup \left\{ 0 \right\}$. For the case of $m=0$ (that is $f \in L ^ 2 (\Omega) $), see Theorem \ref{Th12}. Now we assume that Theorem \ref{Th13} holds for some $m>0$, and we prove that it still holds for $m+1$.

Let $V$ be a fixed open set with $V \subseteq \bar V \subseteq \Omega $, and $f \in H{W^{m + 1,2}}\left( \Omega  \right)$, $u \in H{W^{1,2}}\left( {{\mathbb{H}^n}} \right)$ be a weak solution of \eqref{eq11}. Specifically, $f \in H{W^{m,2}}\left( \Omega  \right)$, then $u \in HW_{\mathrm{loc}}^{m + 2,2}\left( \Omega  \right)$ from the inductive assumption, and thus
\begin{equation}\label{eq529}
u \in H{W^{m + 2,2}}\left( V \right).
\end{equation}

For any fixed open set $\Omega '$ with $\bar V \subseteq \Omega ' \subseteq \bar \Omega ' \subseteq \Omega $, we set
\[\rho  = {\rm{dist}}\left( {\Omega ',\partial \Omega } \right) > 0.\]
Furthermore, we take a truncation function $\xi  \in C_0^\infty \left( {{\mathbb{H}^n},\mathbb{R}} \right)$ that satisfies (1)-(3) in Lemma \ref{Le54} and define $\upsilon : = u\xi $. Then, we have
\[\upsilon  \in H{W^{m + 2,2}}\left( {{\mathbb{H}^n}} \right)\]
from \eqref{eq529} and ${\rm{supp}}\xi  \subseteq \Omega $, so by Lemma \ref{Le54}, there exists $\psi  \in {C^\infty }\left( {\bar \Omega ',\mathbb{R}} \right)$ such that $\upsilon $ is a weak solution to the equation
\begin{equation}\label{eq530}
   - \Delta_{\mathbb{H}^n} \upsilon \left( x \right) + {c_{Q,s}}P.V.\int_{{\mathbb{H}^n}} {\frac{{\upsilon \left( x \right) - \upsilon \left( y \right)}}{{\left\| {{y^{ - 1}} \circ x} \right\|_{{\mathbb{H}^n}}^{Q + 2s}}}dy}  = f\left( x \right) + \psi \left( x \right),\;x \in \Omega '.
\end{equation}

Noting that $f \in H{W^{m + 1,2}}\left( \Omega  \right)$, $\psi  \in {C^\infty }\left( {\bar \Omega ',} \mathbb{R}\right)$, it follows $\tilde f: = f + \psi  \in H{W^{m + 1,2}}\left( \Omega  \right)$. Then we apply Proposition \ref{Pro52} to deduce $\upsilon  \in HW_{\mathrm{loc}}^{m + 3,2}\left( \Omega  \right)$. In particular, $\upsilon  \in H{W^{m + 3,2}}\left( V \right).$ Again because $\xi  \equiv 1$ on ${\Omega '_{{\raise0.5ex\hbox{$\scriptstyle \rho $}
\kern-0.1em/\kern-0.15em
\lower0.25ex\hbox{$\scriptstyle 4$}}}} \supset V$, we have
\[\upsilon  \equiv u \in H{W^{m + 3,2}}\left( V \right).\]
Therefore, Theorem \ref{Th13} is proved.

\section*{Declarations}
\subsection*{Funding}
The first author was supported by the National Natural Science Foundation of China (No. 12501269) and the Scientific Research Program Funded by Shaanxi Provincial Education Department  (No. 25JK03\\60).





\subsection*{Data Availability}
Data sharing is not applicable to this article as no datasets were generated or analysed during the current study.

\subsection*{Conflict of interest} The authors declare that there is no conflict of interest. We also declare that this manuscript has no associated data.



\begin{thebibliography}{99}
\bibitem{AM18} A. Adimurthi, A. Mallick, Hardy type inequality on fractional order Sobolev spaces on the Heisenberg
group, \emph{Ann. Sc. Norm. Super. Pisa Cl. Sci.}, \textbf{18(3)}, (2018), 917-949.

\bibitem{BFM13} T. P. Branson, L. Fontana, C. Morpurgo, Moser-Trudinger and Beckner-Onofri's Inequalities on the CR sphere, \emph{Ann. of Math.}, \textbf{177(1)}, (2013), 1-52.

\bibitem{BDVV22} S. Biagi, S. Dipierro, E. Valdinoci, et al., Mixed local and nonlocal elliptic operators: regularity and maximum principles,
\emph{Comm. Partial Differential Equations}, \textbf{47(3)}, (2022), 585-629.

\bibitem{BMS23} A. Biswas, M. Modasiya, A. Sen, Boundary regularity of mixed local-nonlocal operators and its application,
\emph{Ann. Mat. Pur. Appl.}, \textbf{202(2)}, (2023), 679-710.

\bibitem{CL97} L. Capogna, Regularity of quasi-linear equations in the Heisenberg group, \emph{Comm. Pure Appl. Math.}, \textbf{50(9)}, (1997), 867-889.

\bibitem{CL99} L. Capogna, Regularity for quasilinear equation and 1-quasiconformal maps in carnot groups, \emph{Math. Ann.}, \textbf{313(2)}, (1999), 263-295.

\bibitem{CCR15} P. Ciatti, M. G. Cowling, F. Ricci, Hardy and uncertainty inequalities on stratified Lie groups, \emph{Adv. Math.}, \textbf{277}, (2015), 365-387.

\bibitem{CT16} E. Cinti, J. Tan, A nonlinear Liouville theorem for fractional equations in the Heisenberg group, \emph{J. Math. Anal. Appl.}, \textbf{433}, (2016), 434-454.

\bibitem{DM20} C. De Filippis, G. Mingione, On the regularity of non-autonomous functionals, \emph{J. Geom. Anal.}, \textbf{30(2)}, (2020), 1584-1626.

\bibitem{DM24} C. De Filippis, G. Mingione, Gradient regularity in mixed local and nonlocal problems,
\emph{Math. Ann.}, \textbf{388(1)}, (2024), 261-328.

\bibitem{DA04} A. Domokos, Differentiability of solutions for the non-degenerate $p$-Laplacian in the Heisenberg group, \emph{J. Differential Equations}, \textbf{204(2)}, (2004), 439-470.

\bibitem{DM05} A. Domokos, J. Manfredi, ${C^{1,\alpha }}$-regularity for $p$-harmonic functions in the Heisenberg group for $p$ near 2, The $p$-harmonic equation and recent advances in analysis, 17-23, \emph{Contemp. Math.}, \textbf{370}, Amer. Math. Soc., Providence, RI, 2005.

\bibitem{DFZ24} M. Ding, Y. Fang, C. Zhang, Local behavior of the mixed local and nonlocal problems with nonstandard growth,
\emph{J. Lond. Math. Soc.}, \textbf{109(6)}, (2024), 1-28.

\bibitem{FZZ24} Y. Fang, C. Zhang, J. Zhang, Local regularity for nonlocal double phase equations in the Heisenberg group, \emph{Proc. Roy. Soc. Edinburgh Sect. A}, 2024, https://doi.org/10.1017/prm.2024.89.

\bibitem{FF15} F. Ferrari, B. Franchi, Harnack inequality for fractional Laplacians in Carnot groups, \emph{Math. Z.}, \textbf{279}, (2015), 435-458.

\bibitem{FMPPS18} F. Ferrari, M. Jr. Miranda, D. Pallara, A. Pinamonti, Y. Sire, Fractional Laplacians, perimeters and heat semigroups in Carnot groups, \emph{Discrete Cont. Dyn. Syst. Ser. S}, \textbf{11(3)}, (2018), 477-491.

\bibitem{FGMT15} R. Frank, M. Gonzalez, D. Monticelli, J. Tan, An extension problem for the CR fractional Laplacian, \emph{Adv. Math.}, \textbf{270}, (2015), 97-137.

\bibitem{GL23} P. Garain, E. Lindgren, Higher H\"{o}lder regularity for mixed local and nonlocal degenerate elliptic equations,
\emph{Calc. Var. Partial Differential Equations}, \textbf{62(2)}, (2023), 1-36.

\bibitem{GT22} N. Garofalo, G. Tralli, A class of nonlocal hypoelliptic operators and their extensions, \emph{Indiana Univ. Math. J.}, \textbf{70(5)}, 2022, 1717-1744.

\bibitem{GT21} N. Garofalo, G. Tralli, Feeling the heat in a group of Heisenberg type, \emph{Adv. Math.}, \textbf{381}, (2021), Art. 107635.

\bibitem{HL67} L. H\"{o}rmander, Hypoelliptic second order differential equations, \emph{Acta Math.}, \textbf{119(3)}, (1967), 147-171.

\bibitem{KS18} A. Kassymov, D. Surgan, Some functional inequalities for the fractional $p$-sub-Laplacian, arXiv:1804.01415.

\bibitem{KS20} A. Kassymov, D. Suragan, Lyapunov-type inequalities for the fractional $p$-sub-Laplacian, \emph{Adv. Oper. Theory}, \textbf{5(2)}, (2020), 435-452.

\bibitem{MM07} J.J. Manfredi, G. Mingione, Regularity results for quasilinear elliptic equations in the Heisenberg Group, \emph{Math. Ann.}, \textbf{339(3)}, (2007), 485-544.

\bibitem{MPP23} M. Manfredini, G. Palatucci, M. Piccinini, S. Polidoro, H\"{o}lder continuity and boundedness estimates for nonlinear fractional equations in the Heisenberg group, \emph{J. Geom. Anal.}, \textbf{33(3)}, (2023), Art 77.

\bibitem{MS03} S. Marchi, ${C^{1,\alpha }}$ local regularity for the solutions of the $p$-Laplacian on the Heisenberg group for $2 \le p \le 1 + \sqrt 5 $, \emph{Z. Anal. Anwendungen}, \textbf{20(3)}, (2001), 617-636. \emph{Z. Anal. Anwendungen}, \textbf{22(2)}, (2003), 471-472.

\bibitem{MS031} S. Marchi, ${C^{1,\alpha }}$ local regularity for the solutions of the $p$-Laplacian on the Heisenberg group, The case $1 + \frac{1}{{\sqrt 5 }} < p \le 2$, \emph{Comment. Math. Univ. Carolin.}, \textbf{44(1)}, (2003), 33-56.


\bibitem{MZZ07} G. Mingione, A. Zatorska-Goldstein, X. Zhong, Gradient regularity for elliptic equations in the Heisenberg group, \emph{Adv. Math.}, \textbf{222(1)}, (2009), 62-129.

\bibitem{MZ17} S. Mukherjee, X. Zhong, ${C^{1,\alpha }}$-Regularity for variational problems in the Heisenberg group, \emph{Anal. PDE}, \textbf{14(2)}, (2021), 567-594.

\bibitem{OT25} P. Oza, J. Tyagi, Existence and regularity of solutions to mixed fully nonlinear local and nonlocal sub-elliptic equation in the Heisenberg group, \emph{Anal. Math. Phys.}, \textbf{15(1)}, (2025), Art. 18.

\bibitem{PP22} G. Palatucci, M. Piccinini, Nonlocal Harnack inequalities in the Heisenberg group, \emph{Calc. Var. Partial Differential Equations}, \textbf{61(5)}, (2022), Art. 185.

\bibitem{PP23} G. Palatucci, M. Piccinini, Nonlinear fractional equations in the Heisenberg group, \emph{Bruno Pini Math. Anal.}, \textbf{14(2)}, (2023), 163-200.

\bibitem{P22} M. Piccinini, The obstacle problem and the Perron Method for nonlinear fractional equations in the Heisenberg group, \emph{Nonlinear Anal.}, \textbf{222}, (2022), Art. 112966.

\bibitem{R15} D. Ricciotti, $p$-Laplace equation in the Heisenberg group, SpringerBriefs in Mathematics. Springer, BCAM
Basque Center for Applied Mathematics, Bilbao, 2015

\bibitem{R18} D. Ricciotti, On the ${C^{1,\alpha }}$-regularity of $p$-harmonic functions in the Heisenberg group, \emph{Proc. Amer. Math. Soc.}, \textbf{146(7)}, (2018), 2937-952.

\bibitem{RT16} L. Roncal, S. Thangavelu, Hardy's Inequality for fractional powers of the sublaplacian on the Heisenberg group, \emph{Adv. Math.}, \textbf{302}, (2016), 106-158.

\bibitem{SVWZ22} X. Su, E. Valdinoci, Y. Wei, et al., Regularity results for solutions of mixed local and nonlocal elliptic equations, \emph{Math. Z.}, \textbf{302(3)}, (2022), 1855-1878.

\bibitem{ZHT23} X. Zha, S. Huang, Q. Tian, Uniform boundedness results of solutions to mixed local and nonlocal elliptic operator,
\emph{Aims Math.}, \textbf{8(9)}, (2023), 20665-20678.

\bibitem{Z26} J. Zhang, $C^{1,\alpha}$-regularity for Mixed Local and Nonlocal Degenerate Elliptic Equations in the Heisenberg Group. arXiv preprint arXiv:2602.08398. https://doi.org/10.48550/arXiv.2602.08398

\bibitem{ZN20} J. Zhang, P. Niu, H\"{o}lder regularity of quasiminimizers to generalized Orlicz functional on the Heisenberg group, \emph{J. Funct. Spaces}, \textbf{Art. ID 8838654}, (2020), Art. 13.

\bibitem{ZN22} J. Zhang, P. Niu, Fractional estimates for nonlinear non-differentiable degenerate elliptic system on the Heisenberg group, \emph{J. Nonlinear Convex Anal.}, \textbf{23}, (2022), 1419-1451.

\bibitem{ZL23} J. Zhang, Z. Li, Weak Differentiability to Nonuniform Nonlinear Degenerate Elliptic Systems under $P,q$-growth Condition on the Heisenberg Group, \emph{J. Math. Anal. Appl.}, \textbf{525}, (2023), Art. 42.

\bibitem{ZN26} J. Zhang, P. Niu, Regularity for Mixed Local and Nonlocal Degenerate Elliptic Equations in the Heisenberg Group, \emph{J. Differential Equations}, \textbf{453}, (2026), Art. 113888.


\end{thebibliography}
\end{document}